\newtheorem{thm}{Theorem}[section]
\newtheorem{prop}[thm]{Proposition}
\newtheorem{cor}[thm]{Corollary}
\newtheorem{lem}[thm]{Lemma}
\theoremstyle{definition}
\newtheorem{defn}[thm]{Definition}
\newtheorem{assertion}[thm]{Assertion}
\theoremstyle{remark}
\newtheorem{rem}[thm]{Remark}
\newcommand{\F}{{\mathcal F}}
\newcommand{\C}{{\mathcal C}}
\newcommand{\calO}{{\mathcal O}}
\newcommand{\D}{{\mathcal D}}
\newcommand{\calL}{\mathcal L}
\newcommand{\R}{{\mathbb R}}
\newcommand{\m}{\mathfrak{m}}
\newcommand{\Lfb}{\mathsf{Lfb}}
\newcommand{\Fgp}{\mathsf{Fgp}}
\newcommand{\Mod}{\mathsf{Mod}}
\newcommand{\VB}{\mathsf{VBb}_{(S,\C)}}
\newcommand{\mapright}[1]{%
 \smash{\mathop{%
  \hbox to 1cm{\rightarrowfill}}\limits_{#1}}}
\newcommand{\maprightd}[2]{%
 \smash{\mathop{%
  \hbox to 1.2cm{\rightarrowfill}}\limits^{#1}\limits_{#2}}}
\newcommand{\mapleft}[1]{%
 \smash{\mathop{%
  \hbox to 1cm{\leftarrowfill}}\limits_{#1}}}
\newcommand{\mapleftu}[1]{%
 \smash{\mathop{%
  \hbox to 0.8cm{\leftarrowfill}}\limits^{#1}}}
\newcommand{\maprightu}[1]{%
 \smash{\mathop{%
  \hbox to 1cm{\rightarrowfill}}\limits^{#1}}}
\newcommand{\maprightud}[2]{%
 \smash{\mathop{%
  \hbox to 1cm{\rightarrowfill}}\limits^{#1}_{#2}}}
\newcommand{\mapleftud}[2]{%
 \smash{\mathop{%
  \hbox to 1cm{\leftarrowfill}}\limits^{#1}_{#2}}}
\newcounter{eqn}[section]
\def\theeqn{\textnormal{(\thesection.\arabic{eqn})}}
\def\eqnlabel#1{%
  \refstepcounter{eqn}%
  \label{#1}%
  \leqno{\theeqn}}
\begin{document}

\title{On the category of stratifolds 
}
\footnote[0]{{\it 2010 Mathematics Subject Classification}: 
18F15, 58A35, 58A40, 55R99.
\\ 
{\it Key words and phrases.} Stratifold, differential space, ringed space, vector bundle, the Serre-Swan theorem.  

This research was  partially supported by a Grant-in-Aid for challenging Exploratory Research 16K13753 from Japan Society for the Promotion of Science.

Department of Mathematical Sciences, 
Faculty of Science,  
Shinshu University,   
Matsumoto, Nagano 390-8621, Japan   
e-mail:{\tt kuri@math.shinshu-u.ac.jp}
}

\author{Toshiki Aoki and Katsuhiko KURIBAYASHI}
\date{}
   
\maketitle

\begin{abstract} Stratifolds are considered from a categorical point of view.  
We show among others that the category of stratifolds fully faithfully embeds into the category of 
${\mathbb R}$-algebras as does the category of smooth manifolds.  
We prove that a variant of the  Serre-Swan theorem holds for stratifolds. In particular, 
the category of vector bundles over a stratifold is shown to be equivalent to the category of vector bundles 
over an associated affine scheme although the latter is in general larger than the stratifold itself.   
\end{abstract}

\section{Introduction}

Stratifolds have been introduced by Kreck \cite{Kreck}
\footnote{The review article \cite{BK} of Kloeckner contains a nice historical review of different notions of stratified space.}. 
The new notion subsumes manifolds and algebraic varieties with isolated singularities 
as examples; see \cite{G}.  
One of its advantages is that stratifolds give geometric counterparts of singular homology classes of  
a CW complex in much the same way as manifolds give geometric homology classes in the sense of Jakob \cite{Jakob}. More precisely, 
such a homology class is represented by an appropriate bordism class of stratifolds.
One might therefore expect that stratifolds share some of the fascinating properties of manifolds and varieties.  
In this article, we focus on such properties for stratifolds and investigate them from 
categorical and sheaf-theoretical points of view. 

Pursell \cite{P} showed that the category of smooth manifolds fully faithfully embeds into the category of $\R$-algebras. 
We extend this result to the category of stratifolds. 

\begin{thm}\label{thm:main}
The category of stratifolds fully faithfully embeds 
into the category of ${\mathbb R}$-algebras. 
\end{thm}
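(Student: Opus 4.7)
The plan is to adapt Pursell's classical argument for smooth manifolds. The candidate functor sends a stratifold $(S,\C)$ to the $\R$-algebra $\C(S)$ of global smooth functions, and a stratifold morphism $f\colon (S,\C)\to (S',\C')$ to its pullback $f^*\colon \C'(S')\to \C(S)$. Functoriality is built into the definition of a morphism of stratifolds, so the only issues are faithfulness and fullness.

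Faithfulness is the easy half: if distinct $f,g\colon (S,\C)\to (S',\C')$ disagree at some $p\in S$, the smooth bump functions available on any stratifold (part of Kreck's axiomatics) produce $h\in \C'(S')$ with $h(f(p))\ne h(g(p))$, so $f^*(h)\ne g^*(h)$.

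For fullness, given an $\R$-algebra homomorphism $\phi\colon \C'(S')\to \C(S)$, one tries to recover $f\colon S\to S'$ via the rule $\mathrm{ev}_p\circ \phi = \mathrm{ev}_{f(p)}$. Granting the key lemma that every $\R$-algebra homomorphism $\C'(S')\to \R$ is evaluation at a unique point of $S'$, this defines $f$. Continuity of $f$ follows because $f^{-1}\{h\ne 0\} = \{\phi(h)\ne 0\}$ is open for every $h\in \C'(S')$, and the tautology
\[
f^*(h)(p) = \mathrm{ev}_{f(p)}(h) = \mathrm{ev}_p(\phi(h)) = \phi(h)(p)
\]
exhibits $f^*(h)$ as $\phi(h)\in \C(S)$, giving $f^*=\phi$. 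Promoting this global pullback condition to the pullback condition on local sections required by the definition of a stratifold morphism is handled by the sheaf property of $\C$ together with a partition-of-unity argument to cut global functions down to local ones.

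The main obstacle is the key lemma: every $\R$-algebra homomorphism $\phi\colon \C'(S')\to \R$ is a point evaluation. The expected proof mimics Milnor's for manifolds. Set $\m:=\ker \phi$ and suppose, for contradiction, that $\m\not\subset \m_q:=\{h\in\C'(S') : h(q)=0\}$ for every $q\in S'$; then each $q$ admits $h_q\in \m$ with $h_q(q)\ne 0$. Using paracompactness of $S'$ and a smooth partition of unity subordinate to the cover $\{\{h_q\ne 0\}\}_{q\in S'}$, one assembles $h\in \m$ vanishing nowhere on $S'$. But then $h$ is a unit in $\C'(S')$, contradicting $\m\ne \C'(S')$. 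The technical heart of the matter is thus to verify that the stratifold axioms really do supply enough smooth bump functions and smooth partitions of unity to make this summation argument close; once this is checked the rest of the proof is formal.
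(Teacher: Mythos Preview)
Your strategy is the paper's: the paper encodes your ``every character is evaluation at a point'' as the bijectivity of $\theta\colon S\to|\C|$ (Lemma~2.4), and your recipe $\mathrm{ev}_p\circ\phi=\mathrm{ev}_{f(p)}$ is exactly its $f=\theta^{-1}\circ|\phi|\circ\theta$ in the proof of Theorem~2.7. One small point: the step ``promoting to local sections'' is superfluous, since a morphism of stratifolds is by definition a continuous map pulling \emph{global} functions in $\C'$ back to global functions in $\C$, so $f^*=\phi$ on $\C'(S')$ already finishes.

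There is, however, a genuine gap in your sketch of the key lemma. The natural reading of ``assemble $h\in\m$ via a partition of unity'' is to take a locally finite refinement with subordinate $\{\rho_i\}$ and form $h=\sum_i\rho_i h_{q_i}^2$. This is a well-defined nowhere-vanishing element of $\C'(S')$, but you have no control over $\phi(h)$: the ideal $\m=\ker\phi$ is closed only under \emph{finite} sums, and an abstract $\R$-algebra homomorphism need not commute with a locally finite infinite sum. The standard repair, implicit in the paper's citation of Nestruev, exploits that $S'$ is $\sigma$-compact (locally compact Hausdorff with countable basis): first manufacture a proper $\psi\in\C'(S')$ out of bump functions, observe that $(\psi-\phi(\psi))^2\in\m$, and then only finitely many $h_{q_i}$ are needed to cover the compact level set $\psi^{-1}(\phi(\psi))$, so that $(\psi-\phi(\psi))^2+\sum_{i=1}^N h_{q_i}^2$ is a \emph{finite} sum in $\m$ that vanishes nowhere. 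With this correction your argument is complete and agrees with the paper's.
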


A stratifold $(S, \C)$ consists of a topological space $S$ 
and a subalgebra $\C$ of the $\mathbb{R}$-algebra of continuous real-valued functions on the 
underlying space. Such a subalgebra defines a ringed space 
which is called the {\it structure sheaf} of the stratifold. Indeed, the subalgebra is 
nothing but the algebra of global sections of the sheaf. 
The assignment of the algebra to a stratifold, namely the forgetful functor $F$ 
defined by $F(S, \C) = \C$, gives rise to the embedding in Theorem \ref{thm:main}.  

Let $M$ be a smooth manifold. Then the prime spectrum of the ring $C^\infty(M)$ of real-valued smooth functions with the Zariski topology 
is larger than the underlying space $M$ in general. However, the {\it real spectrum}, which is a subspace of the prime spectrum, 
is homeomorphic to $M$. This fact is shown to extend to stratifolds; see Propositions \ref{prop:Spec} and 
\ref{prop:sheaf}. 

The results mentioned above lead us naturally to considering the affine scheme of the global sections of 
the structure sheaf of a stratifold. 
In consequence, we see that the restriction of the affine scheme to the real spectrum 
is isomorphic to a given stratifold as a ringed space; 
see Theorem \ref{thm:affine_scheme}.  We are convinced that the result, a sheaf-theoretic description of a stratifold, 
enables one to consider stratifolds in the framework of derived differential geometry \cite{J, Spivak} though this issue is 
not pursued in this manuscript; see Remark \ref{rem:C^infty}. 

The category of vector bundles over a smooth manifold $M$ is equivalent to the category of finitely generated projective modules over 
$C^\infty(M)$ by a classical result of Swan \cite{Swan}. An analogous result for algebraic varieties has been obtained by Serre \cite{Serre}. 
It is thus worthwhile to investigate a Serre-Swan type theorem for stratifolds. 
To this end, we introduce the appropriate notion of vector bundle over stratifolds; 
see Definition \ref{defn:bundles} and Proposition \ref{prop:stratifold_bundles}. 
With our definition, we get the following result; see Theorem \ref{thm:3} for the precise statement.  

\begin{thm}
The Serre-Swan theorem holds for stratifolds. 
\end{thm}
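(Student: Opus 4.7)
The plan is to construct a global sections functor $\Gamma\colon \VB \to \mathsf{Mod}_\C$ sending $E\to S$ to its $\C$-module $\Gamma(E)$ of sections, and to show that it restricts to an equivalence between $\VB$ and the category of finitely generated projective $\C$-modules. A natural way to organize the argument is to factor $\Gamma$ through the category of vector bundles on $\operatorname{Spec}\C$: by Theorem \ref{thm:affine_scheme} and Proposition \ref{prop:sheaf} the stratifold is identified with the restriction of the affine scheme to its real spectrum, so the problem reduces to comparing vector bundles on the two sides of this restriction, with the classical Serre--Swan theorem for affine schemes providing the final identification with finitely generated projective modules.

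To prove that $\Gamma(E)$ is finitely generated and projective, I would pick a locally finite trivializing cover $\{U_i\}$ from Definition \ref{defn:bundles}, local frames $e^{(i)}_1,\ldots,e^{(i)}_{n_i}$, and a subordinate partition of unity $\{\rho_i\}\subset \C$ (available because $\C$ is closed under composition with smooth bump functions). The patched sections $\rho_i e^{(i)}_j$ generate $\Gamma(E)$, yielding a surjection $\C^{N}\twoheadrightarrow\Gamma(E)$, and a splitting constructed by partition-of-unity averaging produces projectivity. For fully-faithfulness, a $\C$-linear map $\varphi\colon \Gamma(E)\to\Gamma(F)$ is localized by Proposition \ref{prop:sheaf} and recovered fiberwise from its action on local frames, producing the corresponding bundle morphism.

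For essential surjectivity, given a finitely generated projective $\C$-module $P$, I would realize $P$ as the image of an idempotent $e\in M_n(\C)$ and construct the candidate bundle as the subset $E_e=\{(x,v)\in S\times \R^n\mid e(x)v=v\}$ of the trivial rank-$n$ bundle. Evaluation of $e$ at each closed point of $\operatorname{Spec}\C$ defines a linear projector on $\R^n$, and restriction to the real spectrum $\cong S$ yields the sought-for bundle. One then checks that $\Gamma(E_e)\cong P$ and $E_{\Gamma(E)}\cong E$ naturally in $E$ and $P$, using the identification of sheaves from Proposition \ref{prop:sheaf}.

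The main obstacle will be verifying that $E_e$ is genuinely a vector bundle over $(S,\C)$ in the sense of Definition \ref{defn:bundles}. Because $\operatorname{Spec}\C$ is strictly larger than $S$, the rank of the projector $e(x)$ could a priori vary between strata of $S$, and even on a single stratum the local trivializations must be shown to lie in $\C$ rather than merely being continuous. I expect to address this by a stratum-wise analysis: on each open stratum the classical smooth Serre--Swan argument yields a legitimate smooth vector bundle, and the compatibility of $\C$ with restriction to strata (reflected in Proposition \ref{prop:stratifold_bundles}) allows these trivializations to be promoted to morphisms in $\C$ and continued across lower strata. Once this local triviality is in hand, the verification that $\Gamma$ and $e\mapsto E_e$ form quasi-inverse equivalences should follow by naturality against the affine-scheme picture.
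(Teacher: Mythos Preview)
Your approach differs substantially from the paper's. The paper does \emph{not} work directly with idempotents or with the comparison to $\operatorname{Spec}\C$. Instead it factors the global-section functor as
\[
\VB \xrightarrow{\ \calL\ } \Lfb(S) \xrightarrow{\ \Gamma(S,-)\ } \Fgp(\C),
\]
proves the first arrow is an equivalence by a direct cocycle/gluing argument (Propositions~\ref{prop:2} and~\ref{prop:3}), and then invokes Morye's general criterion (Theorem~\ref{prop:Morye}) for the second arrow. The hypotheses of Morye's theorem---paracompact Hausdorff, fine structure sheaf, \emph{finite covering dimension}---are checked explicitly for stratifolds. The equivalence with vector bundles on $\operatorname{Spec}\C$ that you propose as an organizing principle is, in the paper, deduced only afterwards as a corollary (Remark~\ref{rem:comparison}); using it as an input would be circular unless you supply an independent proof that restriction along $\operatorname{Spec}_r\C\hookrightarrow\operatorname{Spec}\C$ induces an equivalence on locally free sheaves.

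There is also a genuine gap in your direct argument. From a merely \emph{locally finite} trivializing cover $\{U_i\}$ with subordinate partition of unity, the sections $\rho_i e^{(i)}_j$ generate $\Gamma(E)$, but over a non-compact $S$ there may be infinitely many of them, so you obtain a surjection from $\C^{\oplus N}$ with $N$ infinite and no conclusion about finite generation. Something like finite covering dimension (so that the cover can be reorganized into finitely many disjoint families) is exactly what is needed here, and this is precisely the ingredient the paper isolates and verifies. Conversely, your worry about $E_e$ is overstated: since $e\in M_n(\C)$ is idempotent, its rank equals $\operatorname{tr}\,e$, which is a continuous integer-valued function on $S$ and hence locally constant across all strata; choosing independent columns of $e$ near a point and using $C^\infty$-closedness of $\C$ then gives local trivializations in $\C$ directly, without a stratum-by-stratum analysis. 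So if you supply the missing finiteness input, your idempotent route can be made to work---but the paper's route via $\Lfb(S)$ and Morye's theorem packages that input more cleanly.
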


As a consequence, we deduce that the category of vector bundles over a stratifold is equivalent to 
that of vector bundles over the affine scheme associated to the stratifold though the underlying prime spectrum 
is larger than the stratifold itself in general; see Remark \ref{rem:comparison}. 

The rest of this article is organized as follows. 
In Section \ref{section:Proof}, after recalling the definition of a stratifold and its important properties, 
we prove Theorem \ref{thm:main}. 
We investigate stratifolds and their category from a sheaf-theoretical point of view in Section 3. 
In Section 4, the notion of {\it vector bundle} over a stratifold is introduced and the Serre-Swan theorem 
is shown to hold for any stratifold.  
In Section 5, we characterize morphisms of stratifolds by local data, and describe them inside the category of 
{\it diffeological spaces}; see \cite{S, IZ}. 
In Section 6, we study  the product of stratifolds from a categorical perspective. 
This is used in Section 4 in the course of proving the Serre-Swan theorem. 

We conclude this section with comments. An important device in the study of stratifolds is the existence of so-called 
{\it local retractions} near each point of the stratifolds; see \cite{Kreck}. 
These retractions are essential at several places in this article; see Sections 4, 5 and 6. 
Some of proofs in Sections 2 and 3 are straightforward. Yet, they are instructive for clarifying what properties of manifolds and stratifolds are 
responsible for the obtained results. These results are needed to set up a framework for describing the Serre-Swan theorem in our 
context. One of highlights in this manuscript is that a version of the Serre-Swan theorem for stratifolds is proved 
without using tautological bundles or the Whitney immersion theorem as is usually done for proving 
the theorem in the case of a manifold.

\section{The real spectrum of a stratifold}
\label{section:Proof}

This section contains a brief review of stratifolds. We begin with the definition of a differential space in the sense of Sikorski \cite{Sik}. 

\begin{defn} \label{defn:differential_space}
A {\it  differential space} is a pair $(S, \C)$ consisting of a topological space $S$ and an $\mathbb{R}$-subalgebra $\C$
of the $\mathbb{R}$-algebra $C^0(S)$ of continuous real-valued functions on $S$, which is supposed to be {\it locally detectable} and 
$C^\infty$-{\it closed}.  

\medskip
Local detectability means that $f \in \C$ if and only if 
for any $x \in S$, there exist an open neighborhood $U$ of $x$ and an element $g \in \C$ such that 
$f|_U = g|_U$.  

\medskip
$C^\infty$-closedness means that for each $n\geq 1$, each $n$-tuple $(f_1, ..., f_n)$ of maps in $\C$ and each smooth map 
$g : \mathbb{R}^n \to \mathbb{R}$, the composite  
$h : S \to \mathbb{R}$ defined by $h(x) = g(f_1(x), ...., f_n(x))$ belongs to $\C$.
\end{defn}

Let $(S, \C)$ and $(S', \C')$ be differential spaces. 
We call a continuous map $f: S \to S'$ a {\it morphism of the differential spaces}, denoted $f : (S, \C) \to (S', \C')$, 
if $f$ induces a map $f^* : \C' \to \C$; that is,  $\varphi \circ f \in \C$ for each $\varphi \in \C'$.  
Thus we define a category $\mathsf{Diff}$ of differential spaces. Let $\mathsf{Mfd}$ denote the category of smooth manifolds. It is readily seen that the functor $i : \mathsf{Mfd} \to \mathsf{Diff}$ defined by $i(M) = (M, C^\infty(M))$ is a fully faithful embedding. 

For any smooth paracompact manifold $M$, the defining subalgebra $C^\infty(M)$ of $C^0(M)$ has two additional properties: 
\begin{enumerate}
\item[(i)] It extends to a sheaf of $\mathbb{R}$-algebras $U \mapsto  C^\infty(U)$. 
\item[(ii)] For any open cover ${\mathcal U}$ of $M$, there exists a {\it smooth} partition of unity subordinate to ${\mathcal U}$.
In particular, the sheaf $C^\infty$ is generated by global sections in the sense that the canonical map $C^\infty(M) \to (C^\infty)_x$ is surjective 
for any $x \in M$, where  $(C^\infty)_x$ denotes the $\mathbb{R}$-algebra of the germs at $x$. 
\end{enumerate}
This in turn implies that $C^\infty(U)$ can be recovered from $C^\infty(M)$ as the set of 
{\it locally extendable functions} on $U$. With this in mind, we introduce such functions in the context of differential spaces. 

For a differential space $(S, \C)$ and a subspace $Y$ of $S$, we call an element $g \in C^0(Y)$ a {\it locally extendable function} 
if for any $x \in Y$, there exists an open neighborhood $V$ of $x$ in $Y$ and $h\in \C$ such that $g|_V = h|_V$. Let $\C_Y$ be the 
subalgebra of $C^0(Y)$ consisting of locally extendable functions on $Y$. Then it follows that $(Y, \C_Y)$ is a differential space; 
see \cite[page 8]{Kreck}. 
Thus any subspace of a differential space  
inherits the structure of a differential space.

Let $(S, \C)$ be a differential space and $x \in S$. The vector space consisting of derivations on the $\mathbb{R}$-algebra 
$\C_x$ of the germs at $x$ is denoted by $T_xS$, which is called the {\it tangent space} 
of the differential space  at $x$; see \cite[Chapter 1, section 3]{Kreck}. 

\begin{defn} \label{defn:stratifold} 
A {\it stratifold} is a differential space $(S, \C)$ such that the following four conditions hold: 

\begin{enumerate}
\item
$S$ is a locally compact Hausdorff space with countable basis; 
\item
the {\it skeleta} $sk_k(S):= \{x \in S \mid \text{dim } \!T_xS\leq k\}$ are closed in $S$;
\item
for each $x \in S$ and open neighborhood $U$ of $x$ in $S$, 
there exists a {\it bump function} at $x$ subordinate to $U$; that is, a non-negative function $\rho \in \C$ such that 
$\rho(x)\neq 0$ and such that the support $\text{supp }\!\rho :=\overline{\{p \in S \mid \rho(p) \neq 0\}}$ is contained in $U$; 
\item
the {\it strata} $S^k := sk_k(S) - sk_{k-1}(S)$ are $k$-dimensional smooth manifolds such that restriction along 
$i : S^k \hookrightarrow S$ induces an isomorphism of stalks 
$$
i^* : \C_x \stackrel{\cong}{\to} C^\infty(S^k)_x.
$$
for each $x \in S^k$.
\end{enumerate}
\end{defn}

A stratifold is {\it finite-dimensional} if there is a non-negative integer $n$ such that $S = sk_n(S)$.  In particular, the tangent spaces of 
a finite-dimensional stratifold are finite-dimensional. 

In what follows, we assume that all stratifolds are finite-dimensional. 
We may simply write $S$ for a stratifold or differential space $(S, \C)$ if no confusion arises. 
A smooth manifold $(M, C^\infty(M))$ is a typical example of a stratifold. We define the category $\mathsf{Stfd}$ of stratifolds 
as the full subcategory of $\mathsf{Diff}$ spanned by the stratifolds. Observe that the embedding $\mathsf{Mfd} \to \mathsf{Diff}$ 
mentioned above factors through $\mathsf{Stfd}$.

We here recall important properties of a stratifold.

\begin{rem}\label{rem:substratifold}
Let $(S, \C)$ be a stratifold with strata $\{S^i\}$ . \\
{(i)} Let $U$ be an open subset of $S$ and $\C_U$ the subalgebra of $\C$ consisting of locally extendable functions of $C^0(U)$ 
in $\C$. Then $(U, \C_U)$ is a stratifold with strata $\{S^i\cap U\}$; see \cite[Example 5, page 22]{Kreck}.  \\
{(ii)} For any $x \in S^i$, there exist an open neighborhood $U$ of $x$ in $S$ and a morphism 
$$r_x : (U, \C_U)  \to (U \cap S^i, \C_{U\cap S^i})$$ such that $r_x|_{U\cap S^i}= id$. Such a map is called 
a {\it  local retraction} near $x$; see  \cite[Proposition 2.1]{Kreck} \\
{(iii)} Any locally compact Hausdorff space with countable basis is paracompact and in particular countable at infinity. 
This together with the other properties of a stratifold $(S, \C)$ shows that for any open cover $\mathcal{U}$ of $S$, 
there exists a partition of unity subordinate to  $\mathcal{U}$ {\it consisting 
of functions in} $\C$, i.e. the structure sheaf ${\mathcal O}_S$ of the stratifold $(S, \C)$ is {\it fine}; 
see \cite[Proposition 2.3]{Kreck} and Sections 3 and 4. 
\end{rem}


We refer the reader to the book \cite{Kreck} of Kreck for other fundamental properties, examples of stratifolds and 
fascinating results on the stratifold (co)homology.

For an $\R$-algebra $\F$,  we define $| \F |$ to be the set of all morphisms of $\R$-algebras from $\F$ to $\mathbb{R}$ 
which preserve the unit.   
Moreover, we define a map $\widetilde{f} : | \F | \to \R$ by $\widetilde{f}(x) = x(f)$ for any $f \in \F$. 
Let $\widetilde{\F}$ be the $\R$-algebra of maps from $| \F |$ to $\R$ of the form $\widetilde{f}$ for $f \in \F$. 
Then we consider the Gelfand topology on $| \F |$; that is, $| \F |$ is regarded as the topological space with the open basis 
$$\{\widetilde{f}^{-1}(U) \mid U : \text{open in} \ {\mathbb R}, \widetilde{f} \in \widetilde{\F}\}; $$
see \cite[2.1]{GS} and \cite[3.12]{N}.
Thus the assignment of a topological space to an ${\mathbb R}$-algebra gives rise to a contravariant functor 
$$| \ \ | : {\mathbb R}\text{-}\mathsf{Alg} \to \mathsf{Top}$$
which is called the {\it realization functor}, where  ${\mathbb R}\text{-}\mathsf{Alg}$ denotes the category of ${\mathbb R}$-algebras. 

By definition, 
the map $\tau : \F \to \widetilde{\F}$ defined by $\tau(f) = \widetilde{f}$ is surjective. 
It follows that $\tau$ is an isomorphism if $\F$ is a subalgebra of the $\R$-algebra 
of continuous functions on a space; see \cite[3.14]{N}.


\begin{lem}\label{lem:2.1}
Let $(S, \C)$ be a stratifold. Then the map $\theta : S \to |\C|$ defined by 
$\theta (p)(f) = f(p)$ is a homeomorphism. 
\end{lem}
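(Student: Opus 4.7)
The plan is to verify that $\theta$ is a continuous bijection with continuous inverse, with surjectivity being the only step of substantive depth.

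First the routine parts. By construction $\theta(p) : \C \to \R$ is an $\R$-algebra morphism preserving the unit, so $\theta$ is well defined, and continuity of $\theta$ is immediate since each basic open set $\widetilde{f}^{-1}(U) \subset |\C|$ pulls back to $f^{-1}(U)$, which is open in $S$. Injectivity uses the bump-function axiom: given distinct $p, q \in S$, Hausdorffness produces an open neighborhood $U$ of $p$ with $q \notin U$, and a bump function $\rho \in \C$ at $p$ subordinate to $U$ separates $\theta(p)$ from $\theta(q)$. Once surjectivity is established, continuity of $\theta^{-1}$ follows from a similar bump-function argument: given $p \in S$ and an open neighborhood $V$ of $p$, a bump function $\rho \in \C$ at $p$ with $\text{supp}(\rho) \subset V$ yields the basic open neighborhood $W := \{y \in |\C| : y(\rho) > \rho(p)/2\}$ of $\theta(p)$ whose preimage under $\theta$ is contained in $\text{supp}(\rho) \subset V$.

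For surjectivity I would run the standard Gelfand/Milnor argument, adapted to stratifolds. Fix $\varphi \in |\C|$. Using $\sigma$-compactness of $S$ together with partitions of unity in $\C$ (Remark \ref{rem:substratifold}(iii)), I first construct a proper, nonnegative $\tau \in \C$: pick an exhaustion $K_1 \subset K_2 \subset \cdots$ of $S$ by compacts with $K_n \subset \text{int}(K_{n+1})$, take $\rho_n \in \C$ with $\rho_n \equiv 0$ on $K_n$ and $\rho_n \equiv 1$ outside $K_{n+1}$ via a two-set partition of unity, and set $\tau := \sum_n \rho_n$, a locally finite sum in $\C$. Setting $r := \varphi(\tau)$, the set $K := \tau^{-1}(r)$ is nonempty and compact: otherwise $\tau - r$ is nowhere zero, hence invertible in $\C$ (by composing $\tau - r$ locally with a smooth function on $\R$ agreeing with $1/x$ on a neighborhood of the locally attained values, invoking $C^\infty$-closedness and local detectability), contradicting $\varphi(\tau - r) = 0$.

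Finally I would verify that the closed subsets $Z_g := \{p \in K : g(p) = \varphi(g)\}$, indexed by $g \in \C$, have the finite intersection property in $K$. Given $g_1, \ldots, g_n \in \C$, the function
$$h := (\tau - r)^2 + \sum_{i=1}^n (g_i - \varphi(g_i))^2$$
lies in $\C$ and satisfies $\varphi(h) = 0$, yet would be strictly positive everywhere on $S$ if $Z_{g_1} \cap \cdots \cap Z_{g_n}$ were empty, whence invertible in $\C$ -- a contradiction. Compactness of $K$ then yields a point common to all $Z_g$, i.e., a preimage of $\varphi$. The main obstacle, and the one stratifold-specific subtlety, is precisely this invertibility step: it requires that every nowhere-zero function in $\C$ is invertible in $\C$, which depends on the joint action of $C^\infty$-closedness and local detectability.
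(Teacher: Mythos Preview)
Your proof is correct and follows essentially the same strategy as the paper: bump functions handle injectivity and openness (your continuity-of-inverse argument is the paper's openness argument phrased pointwise), while continuity is immediate from the Gelfand topology. The only difference is that you spell out the surjectivity argument in full via the standard Milnor/Gelfand construction of a proper function and the finite-intersection-property trick, whereas the paper simply invokes the proof of \cite[Theorem~7.2]{N} (Nestruev) together with the existence of partitions of unity from Remark~\ref{rem:substratifold}; your self-contained treatment is exactly the argument that reference would supply.
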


\begin{proof} 
In virtue of Remark \ref{rem:substratifold}, the same argument as in the proof of \cite[Theorem 7.2]{N} 
shows that $\theta$ is a bijection. 


For any open set $U$ in $\R$ and $f\in \C$, we see that $\theta^{-1}(\widetilde{f}^{-1}(U))= f^{-1}(U)$ since $\widetilde{f}\circ \theta = f$. 
This implies that $\theta$ is continuous. 

Let $W$ be an open set of $S$.  
By definition, a stratifold has a bump function for each $x \in W$; that is, there exists a non-negative function 
$f_x \in \C$ such that $\text{supp} f_x \subset W$ and $f_x(x) \neq 0$. Then we see that $x \in f_x^{-1}(\R^+) \subset W$ for any $x \in W$ and hence $W = \bigcup_{x \in W}f_x^{-1}(\R^+)$. Therefore, it follows that 
$$\theta(W) = \theta (\bigcup_{x \in W}f_x^{-1}(\R^+)) = \bigcup_{x \in W}(\theta^{-1})^{-1}f_x^{-1}(\R^+)
= \bigcup_{x \in W}\widetilde{f_x}^{-1}(\R^+).$$ 
Observe that 
$\widetilde{f_x}\circ \theta = f_x$ as mentioned above. 
This shows that $\theta$ is open. 
\end{proof}

Let $\F$ be a subalgebra of $C^0(X)$ the $\R$-algebra of continuous maps 
from a space $X$ to $\R$. We call  the pair $(X, \F)$ a {\it continuous space}.   
Let $\mathsf{Csp}$ be the category of continuous spaces. Observe that 
a morphism $\varphi : (X, \F_X) \to (Y, \F_Y)$ is   
a continuous map $\varphi : X \to Y$ which satisfies the condition that 
$f\circ \varphi \in \F_X$ for any $f \in \F_Y$.  
By definition, the category $\mathsf{Diff}$ of differential spaces is a full subcategory of $\mathsf{Csp}$; therefore, the categories 
$\mathsf{Mfd}$ and $\mathsf{Stfd}$ are full subcategories of $\mathsf{Csp}$ as well.   

\begin{prop} \label{prop:spectrum}
The map $\theta : S \to |\C|$ gives rise to an isomorphism 
$\theta : (S, \C) \to (|\C|, \widetilde{\C})$ of continuous spaces. 
\end{prop}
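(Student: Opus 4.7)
The plan is to bootstrap Lemma \ref{lem:2.1}, which already gives that $\theta$ is a homeomorphism, into an isomorphism of continuous spaces. To do so, I need only verify the two pullback conditions on function algebras: that $\widetilde{h} \circ \theta \in \C$ for every $\widetilde{h} \in \widetilde{\C}$, and that $g \circ \theta^{-1} \in \widetilde{\C}$ for every $g \in \C$.

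The key computation is the tautological identity $\widetilde{f} \circ \theta = f$ for every $f \in \C$. Indeed, by the defining formulas, $(\widetilde{f} \circ \theta)(p) = \theta(p)(f) = f(p)$. Given any element of $\widetilde{\C}$, by definition of $\widetilde{\C}$ it has the form $\widetilde{f}$ for some $f \in \C$, so the identity immediately puts $\widetilde{f} \circ \theta = f \in \C$, showing $\theta$ is a morphism in $\mathsf{Csp}$. Conversely, starting from $g \in \C$, the same identity says $g = \widetilde{g} \circ \theta$, hence $g \circ \theta^{-1} = \widetilde{g} \in \widetilde{\C}$, showing $\theta^{-1}$ is a morphism in $\mathsf{Csp}$.

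Here I am implicitly using that the surjection $\tau : \C \to \widetilde{\C}$, $f \mapsto \widetilde{f}$, is well-defined and that $\widetilde{\C}$ consists exactly of the functions of this form (as stated in the paragraph preceding Lemma \ref{lem:2.1}); since $\C \subset C^0(S)$, the map $\tau$ is in fact an isomorphism of $\R$-algebras, which makes the correspondence between $\C$ and $\widetilde{\C}$ unambiguous.

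There is no real obstacle: once Lemma \ref{lem:2.1} is in hand, the statement reduces to the single identity $\widetilde{f} \circ \theta = f$, which is a direct unwinding of the definitions of $\theta$ and of $\widetilde{f}$. The proof is accordingly a short verification and should occupy only a few lines.
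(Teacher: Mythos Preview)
Your proposal is correct and follows essentially the same approach as the paper: both rest on the identity $\widetilde{f}\circ\theta = f$ (equivalently $\theta^*\circ\tau = \mathrm{id}_\C$), from which the two pullback conditions for $\theta$ and $\theta^{-1}$ follow immediately. The paper phrases this as showing $\theta^*$ and $(\theta^{-1})^*$ are well-defined isomorphisms between $\widetilde{\C}$ and $\C$, while you phrase it as directly verifying the morphism conditions in $\mathsf{Csp}$; these are the same argument.
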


\begin{proof}
Recall the isomorphism $\tau : \C \to \widetilde{\C}$. 
We consider the composite $\theta^*\circ \tau : \C \to \widetilde{\C} \to \C$. Then 
it is readily seen  that $(\theta^*\circ \tau) (f) = f$ for any  $f \in \C$. This implies that $\theta^* : \widetilde{\C} \to \C$ 
is a well-defined isomorphism. Since $(\theta^{-1})^*\theta^* (\widetilde{f}) =\widetilde{f}$, 
it follows that $(\theta^{-1})^* : \C \to C^0(|\C|)$ factors through the subalgebra 
$\widetilde{\C}$ and that $(\theta^{-1})^* : \C \to \widetilde{\C}$ is an isomorphism. This completes the proof.  
\end{proof}

We call a maximal ideal ${\sf m}$ of $\C$ real if the quotient $\C/{\sf m}$ is isomorphic to $\R$ as an $\R$-algebra. 
Let $\text{Spec}_r\ \C$ be the {\it real spectrum}, namely the subset of the prime spectrum $\text{Spec} \ \C$ of $\C$ consisting of 
real ideals.  
We consider $\text{Spec}_r\ \C$ the subspace of $\text{Spec}\ \C$ with the Zariski topology. 
It is readily seen that a map $u : | \C | \to \text{Spec}_r\ \C$ defined by 
$u(\varphi)=\text{Ker}\ \varphi$ is bijective. Moreover, the map $u$ is continuous. In fact, for an open base 
$D(f) = \{ {\sf m} \in  \text{Spec}_r\ \C \mid f \notin {\sf m}\}$ for some $f \in \C$, we see that 
$u^{-1}(D(f)) = \widetilde{f}^{-1}(\R\backslash \{0\})$.  

\begin{prop}{\em (}cf. \cite[Remark, page 23]{GS} {\em )} \label{prop:Spec}
The bijection $u : | \C | \stackrel{\cong}{\to} \text{\em Spec}_r\ \C$ is a homeomorphism.  
\end{prop}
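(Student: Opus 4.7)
The statement that $u$ is a continuous bijection is already in hand; since the target has the Zariski topology with basic opens $D(f)=\{\mathsf{m} \mid f\notin \mathsf{m}\}$, and $u^{-1}(D(f)) = \widetilde{f}^{-1}(\R\setminus\{0\})$, the content to add is only that $u$ is \emph{open}. My plan is therefore to fix a basic open set $W = \widetilde{f}^{-1}(U)$ in $|\C|$ (with $U\subset \R$ open and $f\in \C$) and show that $u(W)$ is a union of sets of the form $D(\rho)$ for suitable $\rho\in \C$.

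The key input will be the existence of bump functions in stratifolds, together with the homeomorphism $\theta : S \cong |\C|$ from Lemma \ref{lem:2.1}. Given $\varphi \in W$, set $p = \theta^{-1}(\varphi)\in S$; then $\widetilde{f}(\varphi)=f(p)\in U$, so $p$ lies in the open set $f^{-1}(U)\subset S$. By condition (3) of Definition \ref{defn:stratifold}, there exists $\rho \in \C$ with $\rho(p)\neq 0$ and $\mathrm{supp}\,\rho \subset f^{-1}(U)$. The plan is to verify that $u(\varphi)\in D(\rho)\subset u(W)$: the first inclusion is immediate from $\widetilde{\rho}(\varphi)=\rho(p)\neq 0$, while for the second, any $\mathsf{m}\in D(\rho)$ is of the form $u(\psi)$ for a unique $\psi\in |\C|$ (since $u$ is bijective), and $\rho\notin \mathsf{m}$ translates, via $q=\theta^{-1}(\psi)$, to $\rho(q)\neq 0$; hence $q\in \mathrm{supp}\,\rho \subset f^{-1}(U)$, so $\widetilde{f}(\psi)=f(q)\in U$, i.e.\ $\psi \in W$ and $\mathsf{m}\in u(W)$.

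Putting these two observations together, $u(W)=\bigcup_{\varphi\in W} D(\rho_\varphi)$ is open in $\mathrm{Spec}_r\,\C$, which combined with the continuity of $u$ recorded just before the proposition yields that $u$ is a homeomorphism. I do not anticipate any serious obstacle: once one recognizes that the argument reduces to finding, for each open $V\subset S$ and each $p\in V$, a function in $\C$ that is nonzero at $p$ and whose support is contained in $V$, the bump function axiom does all the work. In particular, local detectability, $C^\infty$-closedness, and the stratification structure are not needed beyond their role in guaranteeing that such bump functions exist.
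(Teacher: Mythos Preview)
Your argument is correct and follows essentially the same route as the paper: both prove openness of $u$ by covering an arbitrary open set of $|\C|$ with Zariski-basic opens $D(\rho)$ coming from bump functions, using the homeomorphism $\theta$ of Lemma~\ref{lem:2.1} to translate between points of $S$ and of $|\C|$. The only cosmetic difference is that the paper reuses the computation from the proof of Lemma~\ref{lem:2.1}, where every open set of $|\C|$ was already written as $\bigcup_x \widetilde{f_x}^{-1}(\R^+)$ with $f_x$ a \emph{non-negative} bump function, and then observes the clean equality $u(\widetilde{f_x}^{-1}(\R^+)) = D(f_x)$ (non-negativity makes $\widetilde{f_x}^{-1}(\R^+)=\widetilde{f_x}^{-1}(\R\setminus\{0\})$); you instead verify the inclusion $D(\rho)\subset u(W)$ directly via the support condition, which amounts to the same thing.
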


\begin{proof}
With the same notation as in Lemma \ref{lem:2.1}, we see that 
$u(\widetilde{f_x}^{-1}(\R^+))=  D(f_x)$. We observe that $\widetilde{f_x}$ is non-negative since $\widetilde{f_x} \circ \theta = f_x$ with 
$\theta$ the bijection. 
\end{proof}

In consequence, the space $\text{Spec}_r\ \C$ is homeomorphic to $| \C |$ and hence the underlying space $S$: 
$$
S \cong | \C | \cong \text{Spec}_r\ \C \subset \text{Spec}\ \C. 
$$

\begin{rem} In \cite[4.3]{J} and \cite{D}, the spectrum for an $\mathbb{R}$-algebra corresponds to what we call the real spectrum of 
the $\mathbb{R}$-algebra, which in general is not the same as its prime spectrum.  
\end{rem}

The following result yields Theorem \ref{thm:main}. 

\begin{thm}\label{thm:key1}
 The forgetful functor $F : \mathsf{Stfd} \to {\mathbb R}\text{-}\mathsf{Alg}$ defined by $F(S, \C) = \C$ is fully faithful; that is, the induced map 
$F : \text{\em Hom}_ {\mathsf{Stfd}}((S, \C), (S', \C')) \to \text{\em Hom}_ {{\mathbb R}\text{-}\mathsf{Alg}}(\C', \C)$
 is a bijection.
\end{thm}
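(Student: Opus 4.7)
The plan is to leverage the isomorphism $\theta : (S, \C) \to (|\C|, \widetilde{\C})$ from Proposition \ref{prop:spectrum} together with the realization functor $|\ \ |$. The key observation is that any $\mathbb{R}$-algebra homomorphism $\varphi : \C' \to \C$ produces a map $|\varphi| : |\C| \to |\C'|$ sending $x \mapsto x \circ \varphi$, and conjugating by $\theta$ should yield a morphism of stratifolds $S \to S'$. Since the real-spectrum machinery of Lemma \ref{lem:2.1} and Proposition \ref{prop:spectrum} already identifies the underlying space of a stratifold with its algebraic counterpart, the argument reduces to a formal diagram chase once the appropriate continuity is checked.

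For injectivity, suppose $f, g : (S, \C) \to (S', \C')$ induce the same pullback $f^* = g^*$. For each $p \in S$ and each $h \in \C'$, $h(f(p)) = (f^*h)(p) = (g^*h)(p) = h(g(p))$, so the evaluation maps $\theta_{S'}(f(p))$ and $\theta_{S'}(g(p))$ agree on $\C'$. Since $\theta_{S'}$ is a bijection by Lemma \ref{lem:2.1}, $f(p) = g(p)$.

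For surjectivity, given $\varphi : \C' \to \C$, I would set $f := \theta_{S'}^{-1} \circ |\varphi| \circ \theta_S$ and verify (i) that $f$ is continuous, (ii) that $f^*\C' \subseteq \C$, and (iii) that $f^* = \varphi$. Continuity of $|\varphi|$ is automatic from the Gelfand topology: the preimage of a subbasic open $\widetilde{g}^{-1}(U) \subseteq |\C'|$ is $\widetilde{\varphi(g)}^{-1}(U)$, which is open in $|\C|$; together with the continuity of $\theta_S$ and $\theta_{S'}^{-1}$ this gives (i). For (iii), using the identity $\widetilde{g} \circ \theta_{S'} = g$ from the proof of Proposition \ref{prop:spectrum}, one computes $(f^*g)(p) = \widetilde{g}(|\varphi|(\theta_S(p))) = \theta_S(p)(\varphi(g)) = \varphi(g)(p)$, so $f^*g = \varphi(g)$. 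This equality simultaneously yields (ii) because $\varphi(g)$ already lies in $\C$ by hypothesis on $\varphi$.

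The substantive input is entirely in the earlier results: Lemma \ref{lem:2.1} (bijectivity and openness of $\theta$, which uses the bump-function axiom of a stratifold) and Proposition \ref{prop:spectrum} (that $\theta$ is an isomorphism of continuous spaces). The present theorem is then formal. What might at first look like an obstacle, namely that $\varphi$ is only assumed to be an $\mathbb{R}$-algebra map with no smoothness or $C^\infty$-closedness condition, dissolves because $\varphi$ lands in $\C$ by hypothesis; the smooth/stratifold structure on the target is enforced entirely by the algebra $\C'$ through local detectability and $C^\infty$-closedness, so nothing further needs to be verified about $f$ beyond what the identity $f^* = \varphi$ already provides.
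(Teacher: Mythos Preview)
Your proof is correct and follows essentially the same route as the paper's: both use the homeomorphism $\theta$ from Lemma~\ref{lem:2.1} and Proposition~\ref{prop:spectrum}, deduce injectivity from the fact that $\varphi$ is recovered from $\varphi^*$ via $\theta$, and for surjectivity define the stratifold morphism as $\theta^{-1}\circ |\varphi|\circ \theta$ and verify by direct computation that its pullback equals $\varphi$. The only cosmetic difference is that the paper packages the injectivity argument as a commutative square (diagram~(2.1)) rather than the equivalent pointwise check you give.
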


\begin{proof} For a morphism $\varphi : (S, \C) \to (S', \C')$ of stratifolds, namely a morphism of continuous spaces, we have a commutative 
diagram
$$
\xymatrix@C35pt@R20pt{
S \ar[r]^-\theta_-\cong \ar[d]_-{\varphi} & |\C| \ar[d]^-{|\varphi^*|} \\
S' \ar[r]^-{\theta}_-\cong &  \ |\C'|. 
}
\eqnlabel{add-1}
$$
In fact, we see that for any $f' \in \C'$ and $p \in S$, 
\begin{eqnarray*}
(|\varphi^*|\circ \theta)(p)(f') \!\!\! &=& \!\!\! |\varphi^*|(\theta(p))(f') = \theta (p)(\varphi^*(f')) \\ 
 &=& \theta (p)(f'\circ \varphi) = (f'\circ \varphi)(p) 
\end{eqnarray*}
and that $(\theta \circ \varphi)(p)(f') = \theta(\varphi(p))(f') = f'(\varphi(p))$. 
This yields that $F$ is injective. 

For any morphism $u : \C' \to \C$ of $\R$-algebras, we define $\varphi : S \to S'$ to be the composite 
$$
\xymatrix@C35pt@R18pt{
S \ar[r]^-\theta_-{\cong} & |\C| \ar[r]^-{|u|} & |\C'| \ar[r]^{\theta^{-1}}_-{\cong} & S'.  
}
$$
Observe that $|u|$ is a continuous map defined by $|u|(p) = p\circ u$; see \cite[3.19]{N}. 
For any $x \in |\C|$, 
we see that $|u|^*(\widetilde{f})(x) = (\widetilde{f}\circ |u|)(x) = \widetilde{f}(x\circ u) = (x\circ u)(f) = \widetilde{u(f)}(x)$. 
Thus it follows that $|u|^* : \widetilde{\C'} \to \widetilde{\C}$ is well defined. Moreover, we have a commutative diagram 
$$
\xymatrix@C35pt@R18pt{
\widetilde{\C'}  \ar[r]^-{|u|^*}  \ar[d]_-{\theta^*} & \widetilde{\C} \ar[d]^-{\theta^*} \\
\C' \ar[r]_-{u}&  \C. 
}
\eqnlabel{add-2}
$$
This follows from the fact that for any $\widetilde{f'} \in \widetilde{\C}$ and $x \in S$, 
\begin{eqnarray*}
(\theta^*\circ|u|^*)(\widetilde{f'})(x) \!\!\!&=& \!\!\! (|u|\circ \theta)^*(\widetilde{f'})(x) = (\widetilde{f'}\circ(|u|\circ \theta))(x) = 
((|u|\circ \theta)(x))(f') \\
&=& \!\!\!(|u|(\theta(x)))(f') = \theta(x)(u(f')) = u(f')(x). 
\end{eqnarray*}
Furthermore, we see that $(u\circ \theta^*)(\widetilde{f'})(x) = u(\theta (\widetilde{f'}))(x) = (u(\widetilde{f'}\circ \theta))(x) = 
u(f')(x)$. This enables us to deduce that $\varphi^* = u$.  
It turns out that  $F$ is a bijection. 
\end{proof}

We conclude this section with comments concerning Theorems \ref{thm:main} and \ref{thm:key1}. 

\begin{rem}\label{rem:F}
A stratifold $(S, \C)$ is a differential space. Then it is readily seen that 
$$
\text{Hom}_ {\mathsf{Stfd}}((S, \C), (\R, C^\infty(\R))) = \C. 
$$
\end{rem}

\begin{rem}
The result \cite[7.19]{N} asserts that the category $\mathsf{Mfd}$ of manifolds is equivalent to the category 
of {\it smooth} $\R$-algebras, which is a 
full subcategory of the category ${\mathbb R}\text{-}\mathsf{Alg}$. 
Moreover, we have the embedding $j : \mathsf{Mfd} \to \mathsf{Stfd}$ as mentioned above. 
However, Theorem \ref{thm:main} is not an immediate consequence of these results. 
\end{rem}

\section{The structure sheaf of a stratifold}

The goal of this section is to give a sheaf-theoretical extension of Theorem \ref{thm:main}. 

Let $X$ be a space and $\C$ an ${\mathbb R}$-subalgebra of $C^0(X)$ the ${\mathbb R}$-algebra of 
real-valued continuous functions on $X$. Recall that for any open subset $U$ of $X$, an element $f \in C^0(U)$ is called 
{\it locally extendable} in $\C$ if for any element $x$ in $U$, there exist an open neighborhood $V_x$ 
of $x$ in $U$ and a function $g \in \C$ such that $f{|_{V_x}} = g{|_{V_x}}$. 
It is readily seen that the pair $(X, \calO_X)$ is a ringed subspace 
of $(X, C^0)$ of real-valued continuous functions, 
where $\calO_X(X) = \C$ and 
$\calO_X(U)$ is the ${\mathbb R}$-subalgebra of $C^0(U)$ consisting of locally extendable elements in $\C$ 
for any open subset $U$ of $X$. 
Such a ringed subspace $(X, \calO_X)$ is called a {\it ringed continuous space}.  

A map between the underlying spaces of ringed continuous spaces, 
which induces a well-defined map between global sections, gives rise to a morphism of ringed spaces. 
The proof is straightforward.  More precisely,  we have 

\begin{lem}\label{lem:morphisms} 
Let $(X, \calO_X)$ and $(Y, \calO_Y)$ be ringed continuous spaces. Let $f : X \to Y$ be a continuous map. 
Suppose that $f^\sharp(g) := g\circ f$ is in $\calO_X(X)$ for any $g \in \calO_Y(Y)$. Then $f^\sharp$ induces a well-defined 
morphism of sheaves 
$
f^\sharp| : \calO_Y \to f_*\calO_X. 
$
\end{lem}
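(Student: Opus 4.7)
The plan is to define, for each open $V \subseteq Y$, a map
\[
f^\sharp|_V : \calO_Y(V) \longrightarrow \calO_X(f^{-1}(V)) = (f_*\calO_X)(V)
\]
by $f^\sharp|_V(g) = g\circ f|_{f^{-1}(V)}$, and then verify (a) that the image really lies in $\calO_X(f^{-1}(V))$, (b) that each $f^\sharp|_V$ is an $\R$-algebra homomorphism, and (c) that the family is compatible with restrictions.

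First I would set up (a). Fix $g \in \calO_Y(V)$ and write $h := g\circ f|_{f^{-1}(V)}\in C^0(f^{-1}(V))$. By definition of a ringed continuous space, $h$ belongs to $\calO_X(f^{-1}(V))$ iff it is locally extendable in $\C_X = \calO_X(X)$. Given $x \in f^{-1}(V)$, the local extendability of $g$ in $\C_Y = \calO_Y(Y)$ gives an open neighbourhood $W$ of $f(x)$ in $V$ and an element $\widetilde{g}\in\calO_Y(Y)$ with $g|_W = \widetilde{g}|_W$. By continuity of $f$, the set $f^{-1}(W)$ is an open neighbourhood of $x$ in $f^{-1}(V)$ on which $h$ agrees with $\widetilde{g}\circ f$. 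The hypothesis $f^\sharp(\widetilde{g}) = \widetilde{g}\circ f \in \calO_X(X)$ then provides the required global extension, so $h \in \calO_X(f^{-1}(V))$.

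Next, (b) is immediate since $g \mapsto g\circ f$ is visibly $\R$-linear, multiplicative and unit-preserving. For (c), if $V'\subseteq V$ are open in $Y$, then for $g \in \calO_Y(V)$ one has
\[
\bigl(g\circ f|_{f^{-1}(V)}\bigr)\bigl|_{f^{-1}(V')} \;=\; (g|_{V'})\circ f|_{f^{-1}(V')},
\]
which is exactly the commutativity of the square relating $f^\sharp|_V$ and $f^\sharp|_{V'}$ via the restriction maps of $\calO_Y$ and of $f_*\calO_X$ (the latter being restriction in $\calO_X$ along $f^{-1}(V')\subseteq f^{-1}(V)$). Hence $\{f^\sharp|_V\}_V$ defines a morphism of sheaves of $\R$-algebras $\calO_Y \to f_*\calO_X$, evaluating to $f^\sharp$ on global sections.

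The only non-trivial step is (a); the rest is formal. There is no real obstacle here, since the single hypothesis $\widetilde{g}\circ f \in \calO_X(X)$ applied to the global extension $\widetilde{g}$, together with continuity of $f$, is precisely what makes the local extendability of $g$ transfer through $f$.
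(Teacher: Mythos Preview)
Your argument is correct and is precisely the ``straightforward'' verification the paper alludes to but does not write out: define $f^\sharp|_V(g)=g\circ f|_{f^{-1}(V)}$, use the local extendability of $g\in\calO_Y(V)$ together with the hypothesis on global sections to see that $g\circ f$ is locally extendable in $\calO_X(X)$, and observe that the algebra and naturality conditions are formal. There is nothing to add.
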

  
Let $(\mathsf{RS})_{C^0}$ be the category of ringed continuous spaces 
on locally compact, Hausdorff spaces with countable basis whose morphisms are 
continuos maps between underlying spaces satisfying the assumption on Lemma \ref{lem:morphisms}. 
Observe that every stratifold $(S, \C)$ gives rise to a ringed continuous space $(S, \calO_S)$. 
Its sheaf of rings $\calO_S$ will be called the {\it structure sheaf} of the stratifolds $(S, \C)$, and is explicitly given by 
$\calO_S(U) = \C_U$ for an open set $U$ of $S$; see Remark \ref{rem:substratifold}. 

\begin{defn}\label{defn:fine sheaf}
A ringed continuous space  $(X, \mathcal{O}_X)$ is called {\it fine} if the sheaf of rings $\calO_X$ is fine; 
that is, if for every locally finite cover ${\mathcal U}$ of $X$, there exists a partition of unity into a sum of global sections 
$s_i \in \mathcal{O}_X(X)$ whose supports are subordinate to ${\mathcal U}$. 
\end{defn} 
We work with the full subcategory 
$\mathsf{f}(\mathsf{RS})_{C^0}$ of $(\mathsf{RS})_{C^0}$ consisting of fine ringed continuous spaces.  
We have seen in Remark \ref{rem:substratifold} that the category $\mathsf{Stfd}$ fully faithfully embeds into 
$\mathsf{f}(\mathsf{RS})_{C^0}$. Moreover, we have the following extensions of results in Section \ref{section:Proof}. 

\begin{prop} \label{prop:sheaf} {\em (i)} The functor $F$ which assigns global sections gives rise to fully and faithful 
embedding form the category $\mathsf{f}(\mathsf{RS})_{C^0}$ into ${\mathbb R}\text{-}\mathsf{Alg}$. \\
{\em (ii)} Let $(X, \mathcal{O}_X)$ be in $\mathsf{f}(\mathsf{RS})_{C^0}$. Then there exist functional homeomorphisms 
$$
X \cong |\mathcal{O}_X(X)| \cong \text{\em Spec}_r \mathcal{O}_X(X). 
$$
{\em (iii)} The category $\mathsf{f}(\mathsf{RS})_{C^0}$ is a full subcategory of $\mathsf{RS}$ the category of ringed spaces. 
\end{prop}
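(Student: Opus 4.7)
The plan is to extend the Section 2 results to fine ringed continuous spaces, with fineness plus local compactness replacing the stratifold axioms as the source of bump functions. Explicitly, for any $x\in X$ and open neighborhood $U$, choose an open $V$ with $x\in V\subset \overline{V}\subset U$ and $\overline{V}$ compact, then apply the partition-of-unity hypothesis to the locally finite cover $\{U,\,X\setminus \overline{V}\}$ to extract $\rho\in \calO_X(X)$ with $\rho(x)\neq 0$ and $\operatorname{supp}\rho\subset U$. This is the key input used from the stratifold structure in Section 2, so the proofs there carry over with only cosmetic changes.

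For part (ii), I would reprove Lemma \ref{lem:2.1} essentially verbatim for the map $\theta:X\to|\calO_X(X)|$, $\theta(p)(f)=f(p)$: continuity is formal from $\widetilde{f}\circ\theta=f$, openness uses the expression $W=\bigcup_{x\in W} f_x^{-1}(\R^+)$ in terms of bump functions, and bijectivity is the usual Gelfand-type argument using that $X$ is Lindel\"of and admits the bump functions just produced. Composing with the purely algebraic homeomorphism $|\calO_X(X)|\cong\text{Spec}_r\,\calO_X(X)$ of Proposition \ref{prop:Spec} then gives (ii).

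For part (i), the proof of Theorem \ref{thm:key1} transfers word for word. Injectivity of $F$ on hom-sets comes from the commutative square (\ref{add-1}), which reconstructs $\varphi$ from $\varphi^*$ via the homeomorphisms $\theta$ of (ii); surjectivity sends an $\R$-algebra map $u:\calO_Y(Y)\to\calO_X(X)$ to the composite $\theta^{-1}\circ|u|\circ\theta:X\to Y$, and the induced map on global sections equals $u$ by the square (\ref{add-2}).

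The substantive step, and the one I expect to be the main obstacle, is fullness in part (iii). Inclusion of hom-sets into $\mathsf{RS}$ is immediate from Lemma \ref{lem:morphisms}, so I must show that any morphism $(f,f^\sharp):(X,\calO_X)\to(Y,\calO_Y)$ of ringed spaces with both sides fine automatically satisfies $f^\sharp_Y(g)=g\circ f$ on global sections. For $x\in X$, the composite $\operatorname{ev}_x\circ f^\sharp_Y:\calO_Y(Y)\to\R$ is an $\R$-algebra homomorphism and hence, by (ii), equals $\operatorname{ev}_{\phi(x)}$ for a unique $\phi(x)\in Y$. It suffices to prove $\phi(x)=f(x)$. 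If $\phi(x)\neq f(x)$, Hausdorffness of $Y$ gives disjoint open neighborhoods $V_1\ni f(x)$ and $V_2\ni\phi(x)$, and the bump-function construction above yields $\rho\in\calO_Y(Y)$ with $\rho(\phi(x))\neq 0$ and $\operatorname{supp}\rho\subset V_2$. Then $\rho|_{V_1}=0$, so the germ $\rho_{f(x)}$ is zero; since $f^\sharp$ is a morphism of sheaves, $f^\sharp_Y(\rho)$ vanishes on $f^{-1}(V_1)$, and in particular at $x$. But $f^\sharp_Y(\rho)(x)=\operatorname{ev}_x(f^\sharp_Y(\rho))=\rho(\phi(x))\neq 0$, a contradiction. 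Hence $\phi=f$, and $f^\sharp_Y(g)(x)=\operatorname{ev}_x(f^\sharp_Y(g))=g(f(x))$ for every $g$ and $x$, so $f^\sharp$ is indeed pullback along $f$.
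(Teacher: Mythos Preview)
Your proposal is correct and follows the paper's overall strategy: parts (i) and (ii) are obtained by rerunning the Section~2 arguments, and for (iii) one defines the ``algebraic'' map $\phi=\theta^{-1}\circ|\varphi_Y|\circ\theta$ (the paper calls it $g$) and shows it coincides with $f$. The execution of the contradiction in (iii) differs slightly. The paper first establishes $\varphi_U=g^\sharp|_U$ for every open $U$ by applying the realization functor to the restriction square, and then argues that well-definedness of $\varphi_{V_{f(x)}}$ as a map into $\mathcal{O}_X(f^{-1}(V_{f(x)}))$ forces $g(f^{-1}(V_{f(x)}))\subset V_{f(x)}$, contradicting $g(x)\notin V_{f(x)}$. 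Your argument bypasses this intermediate step: you use the sheaf-morphism compatibility with restriction directly on a bump function supported near $\phi(x)$ and away from $f(x)$. Your route is a bit more concrete and self-contained (it reuses the bump-function machinery you already set up), while the paper's route avoids a second appeal to bump functions at the cost of a slightly more delicate use of the realization functor. Both reach the same conclusion $\phi=f$, whence $f^\sharp=f^\sharp|$ is the pullback.
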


\begin{proof}
The proofs of Theorem \ref{thm:main} and Proposition \ref{prop:spectrum} yield those of (i) and (ii). 

Let $(f, \varphi) : (X,  \mathcal{O}_X) \to (Y,  \mathcal{O}_Y)$ be a morphisms of ringed spaces. 
In order to prove (iii), it suffices to show that $\varphi = f^\sharp$. We define a continuous map 
$g : X \to Y$ by $g = \theta^{-1} \circ |\varphi_Y | \circ \theta$. 
The commutative diagram (2.2) allows us to deduce that 
$g^\sharp  : \mathcal{O}_Y(Y) \to (f_*\mathcal{O}_X)(Y) = \mathcal{O}_X(X)$ is 
nothing but the map $\varphi_Y$.   
For any open set $U$ of $Y$, we consider a commutative diagram 
$$
\xymatrix@C35pt@R20pt{
 \mathcal{O}_Y(U) \ar[r]^-{\varphi_U,  g^\sharp |} & (f_*\mathcal{O}_X)(U) \\
 \mathcal{O}_Y(Y) \ar[u]^{i^\sharp} \ar[r]_-{\varphi_Y = g^\sharp} & (f_*\mathcal{O}_X)(Y) \ar[u]_{i^\sharp},  
}
$$
where $i : U \to Y$ denotes the inclusion. 
By applying the realization functor $| \ |$ to the diagram above, we see that $\varphi_U =  g^\sharp |$.  
In fact, $|i^\sharp|$ is the inclusion $i$ up to homeomorphism $\theta$; see the commutative diagram (2.1).  
Suppose that $g(x) \neq f(x)$ for some $x \in X$. 
Then there exists an open neighborhood $V_{f(x)}$ of $f(x)$ such that 
$g(x)$ is not in  $V_{f(x)}$. 
On the other hand, since the map 
$g^\sharp | = \varphi_{V_{f(x)}} : \mathcal{O}_Y(V_{f(x)}) \to (f_*\mathcal{O}_X)(V_{f(x)})$ 
is well defined, it follows that $V_{f(x)} \supset g(f^{-1}(V_{f(x)}))$ 
and hence $g(x)$ is in $V_{f(x)}$, which is a contradiction. We have 
$(f, f^\sharp) = (g, g^\sharp) = (f, \varphi)$. 
This completes the proof of (iii).  
\end{proof}

We recall the category $\mathsf{Csp}$ of continuous spaces; see Section \ref{section:Proof}. 
Let $S : \R\text{-}\mathsf{Alg} \to \mathsf{Csp}$ be the contravariant functor defined by 
$S\F = (|\F|, \widetilde{\F})$. By the definition of a morphism in $\mathsf{Csp}$, we see that 
the same maps $\Phi$ and $\Psi$ as in Section \ref{section3} below give bijections 
$$
\xymatrix@C30pt@R20pt{
\text{Hom}_{\mathsf{Csp}^{op}}(S\F, (X, \C))  \ar@<0.5ex>[r]^-{\Phi} & 
\text{Hom}_{\R\text{-}\mathsf{Alg}}(\F, F(X, \C))  \ar@<0.5ex>[l]^-{\Psi}, 
}
$$
where $F : \mathsf{Csp}^{op} \to   \R\text{-}\mathsf{Alg}$ is the forgetful functor defined by 
$F(X, \C) = \C$.  
In fact, for $g$ in $\mathsf{Csp}$, the map $\Phi(g)$ factors through $\C$ and hence $\Phi$ is well defined. 
For $\varphi : \F \to \C$ and $\widetilde{f} \in \widetilde{\F}$, we have 
$(|\varphi | \circ \theta)^*(\widetilde{f})= \varphi(f) \in \C$. This implies that $\Psi$ is well defined. 
Thus $S$ is the left adjoint of $F$. 
Let $U : \mathsf{Csp}  \to \mathsf{Top}$ be the forgetful functor which assigns a continuous space the underlying space.  
We define a functor $m :  \mathsf{f}(\mathsf{RS})_{C^0} \to \mathsf{Csp}$ by 
$m(X, \mathcal{O}) = (X, \mathcal{O}(X))$ for a fine ringed continuous space $(X, \mathcal{O})$. 
With these functors, 
we have a digram 
$$
\xymatrix@C35pt@R20pt{
                    \mathsf{Csp} \ar[r]^-{U}  \ar@<0.5ex>[rd]^-{F} & \mathsf{Top} \\
\hspace*{-1cm}(\mathsf{RS})_{C^0}  \supset \mathsf{f}(\mathsf{RS})_{C^0}  \ar[u]^m \ar[r]_F & {\mathbb R}\text{-}\mathsf{Alg} \ar@<0.5ex>[lu]^-{S} \ar[u]_{\text{Spec}_r ( \ )}\\  
                     \mathsf{Stfd} \ar[ur]_{F} \ar[u]^{l}&   
}
\eqnlabel{add-1}
$$
in which the upper square and the triangles except for the upper right-hand side one are commutative up to isomorphism; 
see Propositions \ref{prop:spectrum} 
and \ref{prop:Spec}. Proposition \ref{prop:sheaf} (i) yields that the functor 
$F : \mathsf{f}(\mathsf{RS})_{C^0} \to {\mathbb R}\text{-}\mathsf{Alg}$ gives rise to an equivalence of categories between 
$\mathsf{f}(\mathsf{RS})_{C^0}$ and its image, which is a full subcategory ${\mathbb R}\text{-}\mathsf{Alg}$. 
One might remember the same result in algebraic geometry as the fact that the category of affine schemes 
is equivalent to the category of commutative rings with the global section functor.  

\begin{rem}
An object in  $\mathsf{f}(\mathsf{RS})_{C^0}$ which comes from $\mathsf{Stfd}$ is a locally ringed space; that is, the ring of germs at 
each point is local. This follows from the definition of a stratifold and \cite[Theorem 1.8]{GS}.    
\end{rem}

Let $A$ be an $\R$-algebra and $U$ an open set of $\text{Spec}_r A$. We put $M_U:=\bigcap_{\m \in U}\m^c$, where 
$\m^c$ denotes the complement of $\m$.  
Then $M_U$ is a multiplicative set. We denote by $M_U^{-1}A$ the localization of $A$ with respect to $M_U$. 
Define the {\it structure sheaf} ${\widehat{A}}$ on $\text{Spec}_r A$ by 
the sheafification of the presheaf $U\leadsto M_U^{-1}A$. Observe that the sheaf $\widehat{A}$ is the inverse image of the affine 
scheme $(\text{Spec} A, {\widetilde{A}})$ of $A$ along 
the inclusion $\text{Spec}_r A\hookrightarrow \text{Spec} A$.

The following proposition asserts that a stratifold is indeed a restriction of an affine scheme. 

\begin{thm} \label{thm:affine_scheme} Let $(S, \mathcal{O}_S)$ be a fine ringed space which comes from a stratifold 
$(S, \C)$ 
and $i : \text{\em Spec}_r \mathcal{O}_S(S) \to \text{\em Spec } \!\mathcal{O}_S(S)$ the inclusion.  
Then $(S, \mathcal{O}_S)$ is isomorphic to $i^*(\text{\em Spec } \!\mathcal{O}_S(S), \widetilde{\mathcal{O}_S(S)})$ 
as a ringed space, where $(\text{\em Spec } \!\mathcal{O}_S(S), \widetilde{\mathcal{O}_S(S)})$ is the affine scheme 
associated with the ring $\mathcal{O}_S(S)$.  
\end{thm}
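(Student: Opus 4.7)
The plan is to use Proposition~\ref{prop:Spec} to identify underlying spaces, then construct a natural morphism between the two structure sheaves and verify it is an isomorphism by passing to stalks, with bump functions and local extendability playing the essential roles.

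First I would invoke Proposition~\ref{prop:Spec} to obtain a homeomorphism $\theta : S \xrightarrow{\cong} \text{Spec}_r\ \C$ sending $p \in S$ to the real maximal ideal $\m_p := \{f \in \C \mid f(p) = 0\}$. Using the fact, recalled just before the theorem, that $\widehat{\C}$ is by definition $i^{-1}\widetilde{\C}$, the underlying space of $i^*(\text{Spec}\ \C, \widetilde{\C})$ is $\text{Spec}_r\ \C$ with structure sheaf $\widehat{\C}$. Hence the task reduces, via $\theta$, to constructing an isomorphism of sheaves of $\R$-algebras $\alpha : \widehat{\C} \xrightarrow{\cong} \theta_* \calO_S$ on $\text{Spec}_r\ \C$.

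To build $\alpha$ I would work first at the presheaf level whose sheafification is $\widehat{\C}$. Given an open $U \subset \text{Spec}_r\ \C$ and an element $f/g \in M_U^{-1}\C$, the hypothesis $g \notin \m$ for every $\m \in U$ means $g$ is nowhere vanishing on $\theta^{-1}(U)$. Around any $p \in \theta^{-1}(U)$ pick an open $V$ on which $|g| > c > 0$ and a smooth $\phi : \R \to \R$ with $\phi(y) = 1/y$ for $|y| > c$. Then $f \cdot (\phi \circ g) \in \C$ by $C^\infty$-closedness and agrees with $f/g$ on $V$, so $f/g$ is locally extendable in $\C$ and defines an element of $\calO_S(\theta^{-1}(U))$. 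This assignment is natural in $U$, and sheafification yields the morphism $\alpha$.

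Next I would check that $\alpha$ is an isomorphism on each stalk. The stalk of $\widehat{\C}$ at $\m_p$ is the localization $\C_{\m_p}$, while the stalk $\calO_{S,p}$ coincides with the $\R$-algebra $\C_p$ of germs of elements of $\C$ at $p$, thanks to local extendability. The induced $\alpha_p : \C_{\m_p} \to \C_p$ sends $f/g$ to the germ of $f/g$ near $p$. Surjectivity is immediate: any germ in $\C_p$ is represented by some $\tilde h \in \C$ near $p$, and equals $\alpha_p(\tilde h/1)$. For injectivity, suppose the germ of $f/g$ at $p$ vanishes; since $g$ is invertible near $p$, $f$ vanishes on some open $V \ni p$. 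Choose a bump function $\rho \in \C$ at $p$ subordinate to $V$ as in Definition~\ref{defn:stratifold}(3). For each $q \in S$, either $q \in V$ and $f(q) = 0$, or $q \notin \mathrm{supp}\,\rho \subset V$ and $\rho(q) = 0$; either way $\rho(q)f(q) = 0$. Thus $\rho f = 0$ in $\C$ while $\rho(p) \neq 0$ gives $\rho \notin \m_p$, so $f/g = 0$ in $\C_{\m_p}$.

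The main obstacle I expect is precisely this injectivity of $\alpha_p$: without the bump function axiom, a function vanishing on a neighborhood of $p$ need not become zero in the localization at $\m_p$, so the argument genuinely uses the stratifold structure beyond that of a general differential space. A secondary technical point, handled by $C^\infty$-closedness, is the well-definedness of the presheaf comparison map, where one must extend $1/g$ locally to an element of $\C$.
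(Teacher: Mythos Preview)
Your proof is correct and follows essentially the same strategy as the paper: identify $S$ with $\text{Spec}_r\,\C$ via Proposition~\ref{prop:Spec}, build the comparison map at the presheaf level, sheafify, and verify it is a stalkwise isomorphism using bump functions. The only differences are cosmetic---you invoke $C^\infty$-closedness directly to invert $g$ locally (the paper instead cites condition~(4) of Definition~\ref{defn:stratifold} via Kreck), and you spell out the bump-function injectivity argument that the paper delegates to \cite[Corollary~1.6]{GS}.
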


\begin{proof} We recall the homeomorphism $\theta : S \stackrel{\cong}{\to} |S|$ and $u : |S| \stackrel{\cong}{\to} \text{Spec}_r  \mathcal{O}_S(S)$ in Section \ref{section:Proof}.  Let $m$ be the composite $u\circ \theta$. 
Then we have 
$$
m(p) = (u\circ \theta)(p) = \text{Ker } \! \theta (p) = \{f \in \C \mid f(p) =0\}=:\m_p.
$$ 
In order to prove the theorem, it suffices to show that $(S, \mathcal{O}_S)$ is isomorphic to the structure sheaf 
$(\text{Spec}_r \mathcal{O}_S(S), \widehat{\mathcal{O}_S(S)})$. To this end, we construct an isomorphism from 
$\widehat{\mathcal{O}_S(S)}$ to $m_*\mathcal{O}_S$. 

For an open set $U$ of $\text{Spec}_r \mathcal{O}_S(S)$, we define 
$\alpha_U :  M_U^{-1}\mathcal{O}_S(S)  \to (m_*\mathcal{O}_S)(U)$ by 
$\alpha([f/s])= f\cdot \frac{1}{s}$. Observe that $s(p) \neq 0$ for each $p$ in $m^{-1}(U)$.  
This implies that $\alpha_U$ is well defined. 
We see that  $\alpha_U$ induces a morphism of presheaves.  Moreover,  
the morphism of presheaves gives rise to a morphism 
$\widehat{\alpha} : \widehat{{\mathcal{O}_S(S)}} \to m_*\mathcal{O}_S$ of sheaves. 
The natural map 
$$\alpha_p : \widehat{\mathcal{O}_S(S)}_{\m_p}:= \text{colim}_{\m_p \in V}\widehat{\mathcal{O}_S(S)}(V)= {\mathcal{O}_S(S)}_{\m_p} 
\to \text{colim}_{p \in U}\mathcal{O}_S(U)=:\C_p$$ 
defined by $\alpha([f/s]) = f_p \cdot (\frac{1}{s_p})$ is well defined. Here ${\mathcal{O}_S(S)}_{\m_p}$ denotes 
the localization of the ring ${\mathcal{O}_S(S)}$ at ${\m_p}$. 
In fact, if $s \in \m_p^c = \mathcal{O}_S(S)\backslash \m_p$, then $s(p) = \theta(p)(s) \neq 0$. 
Since $(S, \mathcal{O}_S(S))$ is a stratifold, it follows that $1/s \in \mathcal{O}_S(U)$ for some open set $U$ of $S$. 
This follows from the condition (4) in Definition \ref{defn:stratifold}; see the proof of \cite[Proposition 2.3]{Kreck}. 
Moreover, there exists a bump function at each $x \in S$. 
Thus the proof of \cite[Corollary 1.6]{GS} enables us to conclude that 
$\alpha_p$ is an isomorphism. It turns out that 
$\widehat{\alpha} = \amalg_{p\in S} \alpha_p$ and hence $\widehat{\alpha}$ is an isomorphism. 
We have the result. 
\end{proof}

\begin{rem}\label{rem:C^infty}
For a stratifold $(S, \C)$, we regard $\C$ as a $C^\infty$-ring; see \cite{J, D, M-R}. 
Theorem \ref{thm:affine_scheme} asserts that the structure sheaf $(S, \mathcal{O}_S)$ of 
$(S, \C)$ is a $C^\infty$-ringed space in the sense of Joyce \cite{J} and is isomorphic to the spectrum of the $C^\infty$-ring $\C$; 
see \cite[Definition 4.12]{J} for example.
\end{rem}

\section{Vector bundles and the Serre-Swan theorem for stratifolds}

Generalizing the notion of smooth vector bundle over a manifold, 
we define a vector bundle over a stratifold. 

\begin{defn}\label{defn:bundles}
Let $(S,\C_S)$ be a stratifold and $(E,\C_E)$ a differential space. A morphism of differential spaces 
$\pi:(E,\C_E)\rightarrow (S,\C_S)$ is a {\it vector bundle} over $(S,\C_S)$ if the following conditions are satisfied. 
\begin{enumerate}
\item $E_x:=\pi^{-1}(x)$ is a vector space over $\R$ for $x\in S$.
\item There exist an open cover $\{U_\alpha\}_{\alpha \in J}$ of $S$ 
and an isomorphism $\phi_\alpha:\pi^{-1}(U_\alpha)\rightarrow U_\alpha \times \R^{n_\alpha}$ of differential spaces 
for each $\alpha \in J$. Here $\pi^{-1}(U_\alpha)$ is regarded as 
a differential subspace of $(E,\C_E)$; see Remark \ref{rem:substratifold}, 
  and  
$U_\alpha \times \R^{n_\alpha}$ is considered the product of the substratifold $(U_\alpha,\C_{U_\alpha})$ of $(S,\C_S)$ and 
the manifold $(\R^{n_\alpha},C^\infty(\R^{n_\alpha}))$; see Section \ref{section:App}.

\item The diagram
\[
\xymatrix@C30pt@R15pt{
\pi^{-1}(U_\alpha)\ar[rr]^{\phi_\alpha} \ar[rd]_\pi&&U_\alpha\times \R^{n_\alpha} \ar[ld]^{pr_1}\\
&U_\alpha&}
\]
is commutative, where $pr_1$ is the projection onto the first factor. 

\item The composite $pr_2\circ \phi_\alpha|_{E_x}: E_x\rightarrow U_\alpha\times\R^{n_\alpha}\rightarrow\R^{n_\alpha}$ is a linear isomorphism, where $pr_2:U_\alpha\times\R^{n_\alpha}\rightarrow \R^{n_\alpha}$ denotes the projection onto the second factor.
\end{enumerate}
\end{defn}

We call a vector bundle $\pi : (E,\C_E)\rightarrow (S,\C_S)$ {\it bounded} if for the index set $J$ of the cover which gives the trivialization, the set of integer $\{n_\alpha\}_{\alpha \in J}$ is bounded. Observe that the integer $n_\alpha$ is constant 
on a connected component of $S$. 

Let $\pi_E:E\rightarrow S$ and $\pi_F: F\rightarrow S$ be vector bundles over a stratifold $S$, 
We define a {\it  morphism $\varphi:E\rightarrow F$ of bundles} to be a morphism of differential spaces from $E$ to $F$ 
such that $\pi_F \circ \varphi = \pi_E$ and the restrictions on each stalks $\varphi_x:E_x\rightarrow F_x$ 
are linear maps. We denote by $\VB$ the category of vector bundles over $(S,\C)$ of bounded rank.

\begin{defn}
Let $ \pi : (E,\C_E) \to (S, \C_S)$ be a vector bundle over a stratifold $(S, \C_S)$.  
A morphism of differential spaces $s:(U,\C_U)\rightarrow (E,\C_E)$ is called a {\it section} 
on $U$ if $\pi\circ s=id_{(U,\C_U)}$. We denote by $\Gamma(U,E)$ the set of all sections. 
\end{defn}

We observe that $\Gamma(U,E)$ is an $\calO_S(U)$-module through the identification 
$\C_U = \calO_S(U)$. 
Moreover, we have the following proposition. 

\begin{prop}\label{prop:module}
The assignment $\Gamma( \ ,E) : U\leadsto \Gamma(U,E)$ gives rise to an $\calO_S$-module. 
\end{prop}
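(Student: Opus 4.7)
The plan is to verify, in order, (a) that $\Gamma(-,E)$ is a presheaf of sets on $S$, (b) that fibrewise addition and scalar multiplication make each $\Gamma(U,E)$ into an $\calO_S(U)$-module compatibly with restriction, and (c) the sheaf axiom. The first step is essentially formal; the main work is in verifying that the fibrewise operations land in $\Gamma(U,E)$ (not merely in the set of continuous sections), which is where the local trivialisations will be used. The sheaf axiom then drops out from the definition of $\C_U$ as the algebra of locally extendable functions.

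For step (a), given open $V \subset U \subset S$ and $s \in \Gamma(U,E)$, the restriction $s|_V = s \circ \iota$ is the composite with the open inclusion $\iota : (V,\C_V) \hookrightarrow (U,\C_U)$; Remark \ref{rem:substratifold}(i) guarantees that $\iota$ is a morphism of differential spaces, so $s|_V \in \Gamma(V,E)$, and functoriality together with compatibility with the restriction map $\calO_S(U)\to \calO_S(V)$ are immediate from the pointwise definitions. For step (b), I would define $s+t$ and $f\cdot s$ pointwise and check locally that they are morphisms of differential spaces. Choose a trivialising cover $\{U_\alpha\}$ from Definition \ref{defn:bundles}; under the isomorphism $\phi_\alpha : \pi^{-1}(U_\alpha) \to U_\alpha \times \R^{n_\alpha}$, sections over $U \cap U_\alpha$ correspond to differential-space morphisms $(U \cap U_\alpha, \C_{U\cap U_\alpha}) \to (\R^{n_\alpha}, C^\infty(\R^{n_\alpha}))$, i.e.\ to $n_\alpha$-tuples of elements of $\C_{U\cap U_\alpha}$. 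Fibrewise addition and scalar multiplication become coordinate-wise operations on these tuples, which remain in $\C_{U\cap U_\alpha}$ by $C^\infty$-closedness applied to the smooth maps $(a,b) \mapsto a+b$ and $(a,b)\mapsto ab$; transporting back by $\phi_\alpha^{-1}$ shows $s+t$ and $f\cdot s$ are morphisms of differential spaces on each $U \cap U_\alpha$. Since being a morphism of differential spaces is a local condition on the source, they are morphisms on all of $U$.

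For step (c), suppose $\{U_i\}_{i\in I}$ is an open cover of $U$ and $s_i \in \Gamma(U_i,E)$ agree on the overlaps $U_i \cap U_j$. Pointwise gluing gives a well-defined map $s : U \to E$ with $\pi \circ s = id_U$; continuity of $s$ is local. It remains to show that $\varphi \circ s \in \C_U$ for every $\varphi \in \C_E$. For each $x \in U$, pick $i$ with $x \in U_i$; then on a neighbourhood of $x$ in $U_i$, $\varphi \circ s$ coincides with $\varphi \circ s_i \in \C_{U_i}$, which by definition is locally extendable to an element of $\C_S$ near $x$. Hence $\varphi \circ s$ is locally extendable at every point of $U$, so $\varphi \circ s \in \C_U$; uniqueness of the gluing is automatic from pointwise determination. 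The main obstacle is conceptual rather than computational: one must remember that $\C_U$ is itself defined by local extendability in $\C_S$, so passing the local conclusion through the definition of $\C_{U_i}$ to $\C_U$ is the correct move, requiring no additional hypothesis beyond what Definition \ref{defn:stratifold} already provides.
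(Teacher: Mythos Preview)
Your proof is correct and follows essentially the same approach as the paper: verify the sheaf axiom via local extendability of $\C_U$, and verify that the fibrewise module operations land in $\Gamma(U,E)$ by passing through local trivialisations. The paper organises things slightly differently---it establishes the sheaf property first and then uses it to glue the locally defined sums $s_\gamma+t_\gamma$, and it isolates as a separate assertion the fact that a section $s:V_\gamma\to E$ factors as a morphism of differential spaces through the open subspace $\pi^{-1}(V_\gamma)$ (a point your ``correspondence under $\phi_\alpha$'' uses implicitly), invoking the categorical product of stratifolds for the reverse direction where you appeal to $C^\infty$-closedness on coordinate functions---but the substance is the same.
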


\begin{proof} We begin by showing that the assignment gives rise to a set-valued sheaf.  
Let $i : U \to V$ be an inclusion between open sets $U$ and $V$ of $S$. Since $i$ is a morphism of stratifolds, 
restricting along $i$ takes a section on $V$ to a section on $U$.

 
Let $\{V_\gamma\}_\gamma$ be an open cover of an open set $U$ of $S$.   
Suppose that $\{s_\gamma\}_\gamma$ in $\prod_\gamma \Gamma (V_\gamma, E)$ satisfies the condition that  
$res^{V_\gamma}_{V_\gamma\cap V_{\gamma'}}(s_\gamma) = res^{V_{\gamma'}}_{V_\gamma\cap V_{\gamma'}}(s_{\gamma'})$ 
for any $\gamma$ and $\gamma'$. 
In the category $\mathsf{Top}$ of topological spaces, 
we have a section $s : U \to E$ with $res^U_{V_\gamma}(s) = s_\gamma$ for any $\gamma$. 
We need to verify that $s$ is a morphism of differential spaces. 
For any $x \in U$, there exists an open set $V_\gamma$ such that $x \in V_\gamma$.     
Since $s_\gamma$ is a morphism of differential spaces, 
it follows that  $(s^*(p))|_{V_\gamma} = s_\gamma^*(p) \in \C_{V_\gamma}$ for $p \in \C_E$. By the definition of $\C_{V_\gamma}$, we 
see that there exists an open neighborhood $W_\gamma$ of $x$ with $W_\gamma \subset V_\gamma$ such that 
$(s^*(p))|_{W_\gamma} = ((s^*(p))|_{V_\gamma})|_{W_\gamma} = p\circ s_\gamma |_{W_\gamma} = h|_{W_\gamma}$ 
for some $h \in \C_S$.  This enables us to conclude that  $s^*(p) \in \C_U$ and hence $\Gamma( \ ,E)$ is a sheaf.   

For $s$ and $t$ in $\Gamma(U, E)$, we define a section $(s+t)$ in $\mathsf{Top}$ by $(s+t)(x) = s(x) + t(x)$ for any $x \in U$. 
We show that $(s+t)$ is in $\Gamma(U, E)$. 
Let $s_\gamma$, $t_\gamma : V_\gamma \to \pi^{-1}(V_\gamma)$ 
be the restrictions of $s$ and $t$ to $V_\gamma$, respectively. 
We define $\widetilde{s_\gamma}  : V_\gamma \to V_\gamma \times \R^n$ 
and $\widetilde{t_\gamma} :  V_\gamma \to V_\gamma \times \R^n$ by 
$\phi_\gamma\circ s_\gamma$ and 
$\phi_\gamma\circ t_\gamma$, respectively.  Here $\phi_\gamma : \pi^{-1}(V_\gamma) \stackrel{\cong}{\to} V_\gamma \times \R^n$ denotes a local trivialization.   

\begin{assertion}\label{assertion:1}
Let $s_\gamma$ be in $\Gamma(V_\gamma, E)$. Then $s_\gamma : V_\gamma \to \pi^{-1}(V_\gamma)$ is a morphism of 
differential spaces. 
\end{assertion}

Thus $\widetilde{s_\gamma}$ and $\widetilde{s_\gamma}$ are morphisms of differential spaces. The projection 
$pr_2 : V_\gamma \times \R^n \to \R^n$ into the second factor is a morphism of stratifolds and so are 
$pr_2\circ \widetilde{s_\gamma}$ and $pr_2\circ \widetilde{t_\gamma}$. 
We see that $pr_2\circ \widetilde{s_\gamma} + pr_2\circ \widetilde{t_\gamma}$ is a morphism of stratifolds. 
Proposition \ref{prop:products} below yields 
a morphism $(s_\gamma+ t_\gamma)' : V_\gamma \to V_\gamma\times \R^n$ of stratifolds which fits into a commutative diagram 
\[
\xymatrix@C60pt@R20pt{
& V_\gamma \ar[rd]^-{1_{V_{\gamma}}} \ar[ld]_-{pr_2\circ \widetilde{s_\gamma} + pr_2\circ \widetilde{t_\gamma} \ \ \ \ } 
\ar[d]^{(s_\gamma+ t_\gamma)' }& \\
\R^n & V_\gamma \times \R^n \ar[l]^{pr_2} \ar[r]_{pr_1}& V_\gamma. 
} 
\]
Define $s_\gamma + t_\gamma : V_\gamma \to \pi^{-1}(V_\gamma)$ to be the composite 
$\phi_\gamma^{-1}\circ (s_\gamma+ t_\gamma)'$. 
Observe that $(s_\gamma + t_\gamma)(x) = (s+t)(x) = (s_{\gamma'} + t_{\gamma'})(x)$ for $x \in U_\gamma\cap U_{\gamma'}$. 
Since $\Gamma( , E)$ is a sheaf, it follows that there exists a unique extension $\widetilde{s+t} \in \Gamma(U, E)$ of 
$\{s_\gamma+t_\gamma \}_\gamma$. It is readily seen that $\widetilde{s+t} = s+t$. 

The same argument as above does work well to show that $sk$ defined by $sk(x) = k(x)s(x)$ is in $\Gamma(U, E)$ 
for $k \in \C_U$ and $s\in \Gamma(U, E)$. This completes the proof. 
\end{proof}

\begin{proof}[Proof of Assertion \ref{assertion:1}] Let $x$ be an element in $V_\gamma$. 
For any $\rho \in \C_{\pi^{-1}(V_\gamma)}$, by definition,  
there exists an open neighborhood $W_{s_\gamma(x)}$ of $s_\gamma(x)$ such that 
$\rho |_{W_{s_\gamma(x)}}= \overline{\rho} |_{W_{s_\gamma(x)}}$ for some $\overline{\rho}\in \C_E$.  
Thus we see that 
\begin{eqnarray*}
\rho\circ s_\gamma |_{s_\gamma^{-1}(W_{s_\gamma(x)})} = 
\rho |_{W_{s_\gamma(x)}}\circ s_\gamma |_{s_\gamma^{-1}(W_{s_\gamma(x)})} 
&=& \overline{\rho} |_{W_{s_\gamma(x))}}\circ s_\gamma |_{s_\gamma^{-1}(W_{s_\gamma(x)})}\\
&=& 
\overline{\rho}\circ s_\gamma |_{s_\gamma^{-1}(W_{s_\gamma(x)})}. 
\end{eqnarray*}
Since $s_\gamma :  V_\gamma \to E$ is a morphism of differential spaces, it follows that $\overline{\rho}\circ s_\gamma$ is in 
$\C_{V_\gamma}$. Then $\overline{\rho}\circ s_\gamma$ 
is a restriction of a map in $\C_S$ to an appropriate open neighborhood of $x$ and hence so is 
$\rho\circ s_\gamma$. We have the result. 
\end{proof}

We denote by $\calL_E$ the $\calO_S$-module of Proposition \ref{prop:module}. 

\begin{lem}\label{lem:sections}
Let $pr_1: S\times\R^n\rightarrow S$ be the product bundle over a stratifold $(S,\C)$. The map $e_i:S\rightarrow S\times\R^n$ defined by $e_i(x)=(x,\mathsf{e}_i)$ is a section of this bundle for $i=1,...,n$, where $\{\mathsf{e_1},...,\mathsf{e_n}\}$ is a canonical basis for $\R^n$.
\end{lem}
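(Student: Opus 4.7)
The plan is to verify the two defining properties of a section: the set-theoretic identity $pr_1\circ e_i = \mathrm{id}_S$, and the fact that $e_i$ is a morphism of differential spaces $(S,\C) \to (S\times\R^n,\C_{S\times\R^n})$. The first is immediate from the definition $e_i(x)=(x,\mathsf{e}_i)$, so the content of the lemma lies in establishing the second.

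For the morphism property, I would appeal to the universal property of the product of stratifolds recorded in Proposition \ref{prop:products}, which was already used implicitly in the proof of Proposition \ref{prop:module}. That property tells us that a set-theoretic map into $S\times\R^n$ is a morphism of stratifolds precisely when both of its projections are morphisms. Here $pr_1\circ e_i = \mathrm{id}_S$ is trivially a morphism, and $pr_2\circ e_i$ is the constant map $c:S\to\R^n$ taking the value $\mathsf{e}_i$. Thus the lemma reduces to checking that $c$ is a morphism of differential spaces $(S,\C)\to (\R^n,C^\infty(\R^n))$.

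To verify this, take any $\varphi\in C^\infty(\R^n)$; then $\varphi\circ c$ is the constant function on $S$ with value $\varphi(\mathsf{e}_i)\in\R$. Because $\C$ is an $\R$-subalgebra of $C^0(S)$, it contains $1_S$ and hence every constant real-valued function, so $\varphi\circ c\in\C$. Therefore $c$ is a morphism of differential spaces, and by the universal property $e_i$ is a morphism of stratifolds. Together with $pr_1\circ e_i=\mathrm{id}_S$, this gives the result.

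There is no genuine obstacle in this lemma; it is a bookkeeping verification whose only nontrivial input is the universal property of the stratifold product. The reason it is stated separately is presumably to fix the sections $e_1,\ldots,e_n$ as a frame of the trivial bundle, so that they can be used later to generate $\Gamma(S,S\times\R^n)$ as a free $\calO_S(S)$-module in the course of the Serre--Swan argument.
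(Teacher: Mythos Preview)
Your proof is correct, but it takes a different route from the paper's. The paper argues directly from the explicit description of the algebra $\C_{S\times\R^n}$ given in Section~\ref{section:App}: for $f\in\C_{S\times\R^n}$ and $x\in S^j$, one picks local retractions $r_x:U_x\to U_x\cap S^j$ and $r_y=\mathrm{id}$ near $\mathsf{e}_i\in\R^n$ with $f|_{U_x\times U_y}=f(r_x\times r_y)$, deduces $f\circ e_i|_{U_x}=f\circ e_i\circ r_x$, and then observes that $e_i|_{S^j}$ is smooth between manifolds so that $f\circ e_i$ lies in $\C$. Your argument instead invokes Proposition~\ref{prop:products} and reduces the question to the two component maps $\mathrm{id}_S$ and the constant map $c$; since Proposition~\ref{prop:products} is already used (forward-referenced) in the proof of Proposition~\ref{prop:module}, there is no circularity, and once $c$ is seen to be a morphism the conclusion is immediate. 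Your approach is cleaner and makes the role of the categorical product transparent; the paper's hands-on verification has the minor advantage of being self-contained at this point and of illustrating concretely how elements of $\C_{S\times\R^n}$ behave under pullback, which is the same mechanism exploited again in Proposition~\ref{prop:3}.
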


\begin{proof}
We prove that $e_i$ is a morphism of differential spaces. 
Suppose that $f$ is in $\C_{S\times\R^n}$; see Section 6. Then there are local retractions $r_x:U_x\rightarrow U_x\cap S^j$ and $r_y=id: U_y\rightarrow U_y$ such that $f|_{U_x\times U_y}=f(r_x\times r_y)$ for $x\in S^j$ and $y=\mathsf{e}_i \in \R^n$. This yields 
that $f\circ e_i|U_x=f\circ e_i\circ r_x$. Since the restriction map $e_i|_{S^j}: S^j\rightarrow S^j\times \R^n$ is smooth, it follows that 
the composite $f\circ e_i|_{S^j}:S^j\rightarrow S^j\times\R^n\rightarrow\R$ is also smooth. In consequence, 
we have $f\circ e_i\in \C$.  
\end{proof}

\begin{prop}\label{prop:transition_function}
The transition functions $g_{\alpha\beta} : U_\alpha\cap U_\beta \to {\text GL}_n(\R)$ are morphisms of stratifolds. 
\end{prop}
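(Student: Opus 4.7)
The plan is to reduce the proposition to Lemma \ref{lem:sections} by realizing the columns of $g_{\alpha\beta}$ as compositions of morphisms of differential spaces. Set $U := U_\alpha \cap U_\beta$ and, passing to a connected component if necessary, $n := n_\alpha = n_\beta$. By Remark \ref{rem:substratifold}(i), $U$ is a substratifold of $S$, so Lemma \ref{lem:sections} applies to $U$ and provides sections $e_i : U \to U \times \R^n$, $e_i(y) = (y,\mathsf{e}_i)$, of the product bundle. The transition function is characterized by
\[
\phi_\alpha \circ \phi_\beta^{-1}(y,v) = (y, g_{\alpha\beta}(y)\, v) \qquad \text{for } (y,v) \in U \times \R^n,
\]
so the $i$-th column of $g_{\alpha\beta}(y)$ is $pr_2\bigl(\phi_\alpha \circ \phi_\beta^{-1}(y, \mathsf{e}_i)\bigr)$.

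Accordingly, I would consider
\[
\sigma_i := pr_2 \circ \phi_\alpha \circ \phi_\beta^{-1} \circ e_i : U \longrightarrow \R^n.
\]
Each factor is a morphism of differential spaces: $e_i$ by Lemma \ref{lem:sections}, $\phi_\beta^{-1}$ and $\phi_\alpha$ as (restrictions of) isomorphisms in the definition of vector bundle, and $pr_2$ by construction of the product. Hence $\sigma_i$ is a morphism of differential spaces. Since $\sigma_i(y) = g_{\alpha\beta}(y)\mathsf{e}_i$, its $j$-th coordinate is the $(j,i)$-entry of $g_{\alpha\beta}$; assembling all $n^2$ entries shows that $g_{\alpha\beta}$, viewed as a map $U \to \R^{n^2}$, has every coordinate in $\C_U$, and so is a morphism of differential spaces by $C^\infty$-closedness.

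Because the image of $g_{\alpha\beta}$ lies in the open submanifold $GL_n(\R) \subset \R^{n^2}$, whose smooth structure is induced from this inclusion, the conclusion upgrades to a morphism of stratifolds $U \to GL_n(\R)$: given $\varphi \in C^\infty(GL_n(\R))$ and $y \in U$, one extends $\varphi$ near $g_{\alpha\beta}(y)$ to a smooth function on $\R^{n^2}$ via a bump function, applies $C^\infty$-closedness to the pulled-back coordinates, and concludes by local detectability. The one step that requires genuine care beyond composing available morphisms is exactly this last passage from ``smooth components into $\R^{n^2}$'' to ``morphism into the open submanifold $GL_n(\R)$''; the rest of the argument is a direct translation of Lemma \ref{lem:sections} through the trivializations.
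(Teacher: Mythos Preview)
Your proposal is correct and follows essentially the same route as the paper: both arguments use Lemma \ref{lem:sections} to feed the standard basis vectors through the composite $pr_2\circ(\text{trivialization change})\circ e_i$, extract the matrix entries as elements of $\C_U$, and then pass to $GL_n(\R)$ by extending a smooth function to $\R^{n^2}$ and invoking $C^\infty$-closedness. Your final step is in fact slightly more careful than the paper's, which asserts a global smooth extension $\overline{f}:\R^{n^2}\to\R$ of $f\in\C_{GL_n(\R)}$ where only a local extension plus local detectability is actually available.
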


\begin{proof}
By the definition of the transition function, we see that $\phi_\beta\phi_\alpha^{-1}(x, v) = 
(x, g_{\alpha\beta}(x)v)$. It follows from Lemma \ref{lem:sections} that the composite 
\[
\xymatrix@C35pt@R10pt{
\psi_j : U_\alpha\cap U_\beta \ar[r]^-{e_j} & U_\alpha\cap U_\beta \times \R^n \ar[r]^-{\phi_\beta \phi_\alpha^{-1}} & 
U_\alpha\cap U_\beta \times \R^n \ar[r]^-{pr_2} &\R^n
}
\]
is a morphism of differential spaces. Therefore, for the well-defined map 
$\psi_j^* : C^\infty({\mathbb R}^n) \to \C_{U_\alpha\cap U_\beta}$, 
we see that $u_{ij}:=\psi_j^*(p_i)= p_i\circ \psi_j \in \C_{U_\alpha\cap U_\beta}$ and $g_{\alpha\beta}(x) = ( u_{ij}(x) )$, where 
$p_i : \R^n \to \R$ is the projection onto the $i$th factor. It turns out that $g_{\alpha\beta}$ is a morphism of stratifolds. In fact, 
for any $f \in \C_{GL_n(\R)}$, there exists a smooth map $\overline{f} : M_{nn}(\R)=\R^{n^2} \to \R$ whose restriction coincides with $f$. Then we have $g_{\alpha\beta}^*(f) (x) = fg_{\alpha\beta}(x) = f(u_{11}(x), u_{12}(x), ..., u_{nn}(x)) = \overline{f}(u_{11}(x), u_{12}(x), ..., u_{nn}(x))$. This yields that  $g_{\alpha\beta}^*(f)$ is in $\C_{U_\alpha\cap U_\beta}$. 
\end{proof}

\begin{prop}\label{prop:stratifold_bundles}
Let $\pi:(E,\C_E)\rightarrow (S,\C_S)$ be a vector bundle in the sense of Definition \ref{defn:bundles}. Then 
the differential space $(E,\C_E)$ admits a stratifold structure for which $\pi$ is a morphism of stratifolds.
\end{prop}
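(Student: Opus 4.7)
The plan is to verify the four conditions in Definition \ref{defn:stratifold} for $(E,\C_E)$; once this is done, $\pi$ is automatically a morphism of stratifolds, since it is given as a morphism of differential spaces and $\mathsf{Stfd}$ sits as a full subcategory of $\mathsf{Diff}$. The strategy is to transport the stratifold structure on the product $(U_\alpha,\C_{U_\alpha})\times(\R^{n_\alpha},C^\infty(\R^{n_\alpha}))$, constructed in Section 6, along the local trivialisations $\phi_\alpha$, and to patch the resulting local data using the fact that the transition functions are stratifold morphisms (Proposition \ref{prop:transition_function}).

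Conditions (1) and (3) are the easier ones. Local compactness, Hausdorffness and the existence of a countable basis descend to $E$ through the local homeomorphisms $\phi_\alpha$; second countability globally uses the boundedness of $\{n_\alpha\}$ together with the Lindel\"of property of $S$ to cover $E$ by countably many trivialisations. A bump function at $e\in E$ subordinate to an open set $W\ni e$ is built, after shrinking $W$ inside some $\pi^{-1}(U_\alpha)$, by pulling back along $\phi_\alpha^{*}$ a product $\rho\cdot\sigma$, where $\rho\in\C_{U_\alpha}$ bumps at $\pi(e)$ and $\sigma\in C^\infty(\R^{n_\alpha})$ smoothly bumps at $pr_2\phi_\alpha(e)$; local detectability of $\C_E$ then extends this to a genuine global element.

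For conditions (2) and (4), the product construction of Section 6 identifies the $k$-stratum of $U_\alpha\times\R^{n_\alpha}$ with $(U_\alpha\cap S^{k-n_\alpha})\times\R^{n_\alpha}$, a smooth $k$-manifold whose stalk of germs in the product algebra is the algebra of smooth germs on that product manifold. Transporting by $\phi_\alpha$ gives $sk_k(E)\cap\pi^{-1}(U_\alpha)=\pi^{-1}(sk_{k-n_\alpha}(U_\alpha))$ and the stalk isomorphism required in (4). The closedness of $sk_k(E)$ then reduces, via continuity of $\pi$ and boundedness of the ranks, to the closedness of the finitely many shifted skeleta $sk_{k-n}(S)$ in $S$.

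The main obstacle is to check that the smooth-manifold structures induced on $E^k\cap\pi^{-1}(U_\alpha)$ by distinct trivialisations agree on the overlaps, so that $E^k$ is genuinely a single $k$-manifold and the stalk isomorphism in (4) is chart-independent. This is exactly what Proposition \ref{prop:transition_function} supplies: since each $g_{\alpha\beta}:U_\alpha\cap U_\beta\to\mathrm{GL}_n(\R)$ is a stratifold morphism, the fibrewise change of coordinates $(x,v)\mapsto(x,g_{\alpha\beta}(x)v)$ restricts to a smooth diffeomorphism on each overlapping stratum $(U_\alpha\cap U_\beta\cap S^j)\times\R^n$, and this is precisely the cocycle compatibility needed to glue the local stratifold structures into a coherent global one on $(E,\C_E)$.
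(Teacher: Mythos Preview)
Your proposal is correct and follows the same route as the paper: transport the product stratifold structure on $U_\alpha\times\R^{n_\alpha}$ through the trivialisations, use Proposition \ref{prop:transition_function} to glue the resulting manifold structures on the strata $\pi^{-1}(S^i)$, and read off the stalk isomorphism in condition (4) from that of the product. The paper compresses conditions (1)--(3) into a few lines and instead displays the stalk comparison as an explicit commutative diagram relating $(\C_E)_x$ to $(\C_{U_\alpha\times\R^n})_{\phi_\alpha(x)}$, but the underlying argument is the same as yours.
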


\begin{proof} Without loss of generality, we assume that there exists a countable trivialization. 
Indeed $S$ has a countable basis. Thus the existence of a countable basis of $E$ follows from the local triviality. 
Moreover, the local triviality allows us to deduce that $E$ is a Hausdorff space.   

Let $S^i$ be a stratum of $S$. Observe that $S^i$ is a manifold for each $i$. 
By virtue of Proposition \ref{prop:transition_function}, we see  
that $\pi^{-1}(S^i)$ is a manifolds and $\pi : \pi^{-1}(S^i) \to S^i$ is a smooth vector bundle. 
It remains to prove that for any $x\in S^i$, the inclusion $i : \pi^{-1}(S^i) \to E$ induces an isomorphism 
$i^* : C(E)_x \to C^\infty(\pi^{-1}(S^i))_x$. Suppose that $x$ is in $U_\alpha$ with $\phi_\alpha : \pi^{-1}(U) \stackrel{\cong}{\to} U_\alpha \times \R^n$ a trivialization.  Then we have a commutative diagram 
\[
\xymatrix@C35pt@R20pt{
(\C_E)_x \ar[r]^-{i^*} \ar[d]_{res^*}^{\cong}& C^\infty(\pi^{-1}(S^i))_x \ar[d]^{res^*}_{\cong} \\
(\C_{\pi^{-1}(U_\alpha)})_x \ar[r]^-{i^*} & C^\infty(\pi^{-1}(S^i \cap U_\alpha))_x \\
(\C_{U_\alpha \times \R^n})_{\phi_\alpha(x)} 
\ar[r]^-{(i\times 1_{\R^n})^*} \ar[u]^{\phi_\alpha^*}_{\cong} & \C(S^i\cap U_\alpha \times \R^n)_{\phi_\alpha(x)} 
\ar[u]_{\phi_\alpha^*}^{\cong} 
}
\]
The stratifold structure on $U_\alpha \times \R^n$ allows us to deduce that $(i\times 1_{\R^n})^*$ is an isomorphism. 
Then we see that the upper horizontal arrow $i^*$ is an isomorphism. 

The local triviality of the bundle implies the existence of a bump function. In fact, the existence is a local property. 
This completes the proof. 
\end{proof}

\begin{prop}\label{prop:1}
Let $(S,\C)$ be a stratifold and $(E,\pi)\in \VB$. Then the $\calO_S$-module $\calL_E$ is a locally free module.
\end{prop}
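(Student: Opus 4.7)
The strategy is to use the local trivializations from Definition \ref{defn:bundles} directly: I will show that for each trivializing open set $U_\alpha$, the restriction $\calL_E|_{U_\alpha}$ is isomorphic to $\calO_S|_{U_\alpha}^{n_\alpha}$ as an $\calO_S|_{U_\alpha}$-module. Since $\{U_\alpha\}$ covers $S$, this gives local freeness.

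First I would reduce to the trivial case. The isomorphism of differential spaces $\phi_\alpha : \pi^{-1}(U_\alpha) \stackrel{\cong}{\to} U_\alpha \times \R^{n_\alpha}$ restricts, for any open $V \subseteq U_\alpha$, to an isomorphism of differential spaces $\pi^{-1}(V) \cong V \times \R^{n_\alpha}$ compatible with the projections onto $V$. Composition with $\phi_\alpha$ therefore induces a bijection $\Gamma(V, E) \cong \Gamma(V, V \times \R^{n_\alpha})$, and this bijection is $\calO_S(V)$-linear because $\phi_\alpha$ is fiberwise a linear isomorphism (condition (4) of Definition \ref{defn:bundles}). Consequently, $\calL_E|_{U_\alpha}$ is isomorphic to the sheaf of sections of the trivial bundle $U_\alpha \times \R^{n_\alpha}$.

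Next I would show that this latter sheaf is free of rank $n_\alpha$ generated by the canonical sections $e_1, \ldots, e_{n_\alpha}$ of Lemma \ref{lem:sections}. Given any section $s \in \Gamma(V, V \times \R^{n_\alpha})$, we can write $s(x) = (x, \sum_i f_i(x)\mathsf{e}_i)$ for unique functions $f_i : V \to \R$ given by $f_i = p_i \circ pr_2 \circ s$, where $p_i : \R^{n_\alpha} \to \R$ is the $i$th coordinate. Since $p_i$, $pr_2$, and $s$ are all morphisms of differential spaces, each $f_i$ lies in $\C_V = \calO_S(V)$. Conversely, given $f_1, \ldots, f_{n_\alpha} \in \calO_S(V)$, the formula $s = \sum_i f_i \cdot e_i$ defines a set-theoretic section, and I must verify it is a morphism of differential spaces; this is the step where I would invoke the same local-retraction argument used in the proof of Lemma \ref{lem:sections}, combined with the $C^\infty$-closedness of $\C$, to show that $\rho \circ s \in \C_V$ for every $\rho \in \C_{V \times \R^{n_\alpha}}$. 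These two checks establish $\Gamma(V, V \times \R^{n_\alpha}) \cong \calO_S(V)^{n_\alpha}$ naturally in $V$.

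The main obstacle is the converse direction in the last step, namely showing that a linear combination $\sum_i f_i \cdot e_i$ with coefficients in $\C_V$ actually defines a morphism of differential spaces into $V \times \R^{n_\alpha}$. Everything else is formal bookkeeping with the trivialization data, but here one genuinely needs to test against arbitrary $\rho \in \C_{V \times \R^{n_\alpha}}$; the argument mirrors Lemma \ref{lem:sections} and relies on the description of $\C_{V \times \R^{n_\alpha}}$ via local retractions from Section \ref{section:App}, coupled with $C^\infty$-closedness of $\C_V$ to handle the sum and products with the $f_i$.
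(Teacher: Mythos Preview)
Your proposal is correct and follows essentially the same route as the paper: use the trivialization $\phi_\alpha$ to identify $\calL_E|_{U_\alpha}$ with the section sheaf of the trivial bundle, and use the canonical sections $e_i$ of Lemma~\ref{lem:sections} as a free basis. The paper's proof is simply a terse version of yours---it defines $s_i = \phi_\alpha^{-1}\circ e_i|_{U_\alpha}$ and asserts without further comment that these form a basis of $\calL_E(U_\alpha)$.

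One remark: the ``main obstacle'' you flag---showing that $\sum_i f_i\cdot e_i$ with $f_i \in \C_V$ is a morphism of differential spaces---is already taken care of by Proposition~\ref{prop:module}, which established that $\Gamma(V,E)$ is an $\calO_S(V)$-module. Once the $e_i$ are known to be sections (Lemma~\ref{lem:sections}) and the module structure is in place, finite $\C_V$-linear combinations of sections are automatically sections; you do not need to rerun the local-retraction argument here.
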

\begin{proof}
Let $(\{U_\alpha\},\{\phi_\alpha:\pi^{-1}(U_\alpha)\rightarrow U_\alpha\times\R^{n_\alpha}\})$ be a trivialization. Define sections $s_i\in\calL_E(U_\alpha)$ by $s_i=\phi_\alpha^{-1}\circ e_i|_{U_\alpha}$ for $i=1,...,n_\alpha$. Then these sections are bases of $\calL_E(U_\alpha)$, so there exists an isomorphism between $\calL_E(U_\alpha)$ and $\calO_S(U_\alpha)^{n_\alpha}$. This induces an isomorphism between $\calL_E|_{U_\alpha}$ and $\calO_S^{n_\alpha}|_{U_\alpha}$.
\end{proof}

For $f\in \text{Hom}_{\VB}(E,F)$, we define a map $f_\ast: \Gamma(U,E)\rightarrow \Gamma(U,F)$ by $f_\ast(s)=f\circ s$. Since $f_x:E_x\rightarrow F_x$ are linear maps, it follows that $f_\ast$ is a morphism of $\calO_S(U)$-modules. Thus $f_\ast$ gives rise to a morphism $\calL_f:\calL_E\rightarrow \calL_F$. 
Let $\Lfb(S)$ be the full subcategory of $\calO_S$-$\mathsf{Mod}$ consisting of locally free $\calO_S$-modules of bounded rank. Proposition \ref{prop:1} enables us to define a functor $\calL:\VB\rightarrow \Lfb(S)$. 
Our goal of this section is to verify that the global section functor is an equivalence of categories as well as the usual result in case of smooth manifolds.

\begin{thm}\label{thm:3} Let $(S, \C)$ be a stratifold. Then the global section functor 
\begin{equation}
\Gamma(S,-): \VB\rightarrow \Fgp(\C) \nonumber
\end{equation}
gives rise to an equivalence of categories, where $\Fgp(\C)$ denotes the category 
of finitely generated projective modules over $\C$. 
\end{thm}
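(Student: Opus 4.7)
The plan is to factor the problem through the locally free sheaf description of Proposition \ref{prop:1} and to construct an essentially inverse functor $\Phi : \Fgp(\C) \to \VB$ from idempotent matrices. First, I would check that the functor $\calL : \VB \to \Lfb(S)$ is an equivalence: full faithfulness recovers a bundle morphism from a sheaf morphism $\calL_E \to \calL_F$ by using the trivializations of Definition \ref{defn:bundles} to represent the morphism locally as a matrix over $\calO_S(U_\alpha)$ and invoking Proposition \ref{prop:transition_function} together with Assertion \ref{assertion:1} to glue the resulting fibrewise-linear maps; essential surjectivity takes trivializations $\calL|_{U_\alpha} \cong \calO_{U_\alpha}^{n_\alpha}$ and assembles the transition $GL_{n_\alpha}$-cocycle into the gluing data of a vector bundle in the sense of Definition \ref{defn:bundles}.

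Next, I would construct $\Phi$. Given $P \in \Fgp(\C)$, choose an idempotent $e = (e_{ij}) \in M_n(\C)$ with $P \cong e \cdot \C^n$, and define $\Phi(P) \subseteq S \times \R^n$ to be the subset of pairs $(x,v)$ with $e(x)v = v$, endowed with the substratifold structure inherited from the product stratifold $S \times \R^n$ of Section \ref{section:App} and with projection $\pi$ to $S$. Local triviality is the usual idempotent-perturbation argument: near a point $x_0$ where $e(x_0)$ has rank $r$, the local rank is constant, and a fixed complementary projection composed with smooth matrix-valued expressions in the entries $e_{ij}$ yields a local change of basis putting $e(x)$ in standard form; smoothness is exactly the $C^\infty$-closedness in Definition \ref{defn:differential_space}, so the change of basis is a morphism of stratifolds.

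The core technical step is to show that for $E \in \VB$ the module $\Gamma(S, E)$ is finitely generated projective, which I would carry out by realizing $E$ as a direct summand of a \emph{trivial} bundle $S \times \R^N$ of finite rank. Granted a finite trivializing cover $\{V_1, \ldots, V_N\}$ with trivializations $\phi_j$ and a subordinate partition of unity $\{\rho_j\} \subset \C$ coming from Remark \ref{rem:substratifold}(iii), the map
\[
\Psi : E \longrightarrow S \times \R^{nN}, \qquad v \longmapsto \bigl(\pi(v),\; \rho_1(\pi(v)) \cdot pr_2\phi_1(v),\; \ldots,\; \rho_N(\pi(v)) \cdot pr_2\phi_N(v)\bigr)
\]
(each component extended by zero off its $V_j$) is a fibrewise-linear injection; a dual construction using the $\rho_j$ gives a retraction $S \times \R^{nN} \to E$, splitting $\Psi$. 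On global sections this exhibits $\Gamma(S, E)$ as a direct summand of $\C^{nN}$. Combined with Proposition \ref{prop:1}, a direct computation then shows that the unit $P \to \Gamma(S, \Phi(P))$ and counit $\Phi(\Gamma(S, E)) \to E$ are isomorphisms: in the first case both sides identify with $e \cdot \C^n$, and in the second the retraction $S \times \R^{nN} \to E$ manufactures an idempotent $e \in M_{nN}(\C)$ with $e \cdot \C^{nN} \cong \Gamma(S, E)$ and $\Phi$ applied to it canonically isomorphic to $E$.

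The main obstacle will be producing the finite trivializing cover above; paracompactness alone gives only a locally finite refinement, which in general is infinite. This is where finite-dimensionality of the stratifold enters: using the closedness of the skeleta $sk_k(S)$ and the countable sum theorem for covering dimension, the topological dimension of $S = sk_n(S)$ is bounded by $n$, and an Ostrand-type colouring then collapses any locally finite trivializing cover into one with at most $n+1$ members. The delicate point is that within a single colour the union of pairwise disjoint trivializing opens must itself be trivializing; this is clear fibrewise, but to upgrade the trivialization to a morphism of stratifolds one has to use the local retractions $r_x$ of Remark \ref{rem:substratifold}(ii) to verify that the piecewise-defined trivialization lies in $\C$ across stratum boundaries. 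Executing this reduction carefully in the stratifold setting, rather than assuming it from the manifold case, is the step that will require the most attention.
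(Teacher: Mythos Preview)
Your proposal is correct in outline, but it takes a different route from the paper. Both you and the paper begin by establishing that $\calL : \VB \to \Lfb(S)$ is an equivalence (the paper's Propositions \ref{prop:2} and \ref{prop:3}), and both ultimately need the finiteness of the covering dimension of $S$. The divergence is in how the equivalence $\Lfb(S) \simeq \Fgp(\C)$ is obtained. The paper does not build an explicit quasi-inverse $\Phi$ from idempotents, nor does it embed $E$ as a summand of a trivial bundle; instead it invokes Morye's theorem (Theorem \ref{prop:Morye}), which gives the Serre--Swan equivalence for any locally ringed space that is paracompact Hausdorff, of finite covering dimension, and has a fine structure sheaf. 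The paper then checks these hypotheses: fineness and paracompactness come from Remark \ref{rem:substratifold}(iii), and finite covering dimension from the decomposition $S = S^1 \sqcup \cdots \sqcup S^n$ together with the inequality $\dim(A \cup B) \le \dim A + \dim B + 1$ (Theorems \ref{thm1}, \ref{thm2}).

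Your approach is more hands-on and closer to the classical Swan argument: you produce a finite trivializing cover via an Ostrand-type colouring and then split $E$ off from a trivial bundle with a partition of unity. This is self-contained and makes the role of finite-dimensionality completely transparent, at the cost of having to verify carefully that the piecewise constructions (the trivialization over a disjoint union in one colour class, the extended-by-zero components of $\Psi$, the idempotent-image $\Phi(P)$ as a differential subspace of $S \times \R^n$) are genuinely morphisms of stratifolds. The paper's route buys modularity---all of that bookkeeping is absorbed into Morye's general result---and is in fact what the authors highlight in the introduction as the point of their treatment: proving Serre--Swan for stratifolds \emph{without} realizing $E$ inside a trivial bundle. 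One minor remark: your worry about the trivialization over a disjoint union of opens is less serious than you suggest, since local detectability of $\C$ handles it directly; the genuine work in your plan is the Ostrand reduction and the verification that $\Phi(P)$ (a non-open subspace of $S \times \R^n$) satisfies Definition \ref{defn:bundles}.
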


We shall prove Theorem \ref{thm:3} by using the  result due to Morye \cite{Morye}, 
Proposition \ref{prop:module} and an equivalence between categories $\VB$ and $\Lfb(S)$ for a stratifold 
$(S, \C)$, which is proved below. 

\begin{lem}\label{lem:1}
Let $X$ be a topological space and $\{X_\alpha\}$ an open cover of $X$. Suppose $(X_\alpha,\C_\alpha)$ is a differential space for each $\alpha$.  Define $\C$ to be the subalgebra of $C^0(X)$ consisting of $f:X\rightarrow\R$ such that $f|_{X_\alpha}\in \C_\alpha$ for all $\alpha$. Then the pair $(X,\C)$ is a differential space.
\end{lem}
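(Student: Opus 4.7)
The plan is to verify the three defining properties of a differential space for the pair $(X,\C)$: that $\C$ is an $\R$-subalgebra of $C^0(X)$, that $\C$ is locally detectable, and that $\C$ is $C^\infty$-closed. All three will reduce to the corresponding properties of each $\C_\alpha$ by restricting to the cover $\{X_\alpha\}$.

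First I would check the subalgebra condition. If $f,g \in \C$ and $c \in \R$, then for every $\alpha$ one has $(f+g)|_{X_\alpha} = f|_{X_\alpha} + g|_{X_\alpha}$, $(fg)|_{X_\alpha} = (f|_{X_\alpha})(g|_{X_\alpha})$, and $(cf)|_{X_\alpha} = c(f|_{X_\alpha})$, all of which lie in $\C_\alpha$ because each $\C_\alpha$ is an $\R$-subalgebra. The constant functions obviously restrict to elements of each $\C_\alpha$, so $\R \subset \C$.

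Next I would verify $C^\infty$-closedness. Given $f_1,\dots,f_n \in \C$ and a smooth map $g : \R^n \to \R$, let $h(x) = g(f_1(x),\dots,f_n(x))$. Then $h$ is continuous since the $f_i$ are continuous and $g$ is smooth, and for each $\alpha$, $h|_{X_\alpha} = g(f_1|_{X_\alpha},\dots,f_n|_{X_\alpha})$ lies in $\C_\alpha$ by $C^\infty$-closedness of $\C_\alpha$. Hence $h \in \C$.

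The main (though still routine) step is local detectability, since it mixes the global data on $X$ with the local data on each $X_\alpha$. One direction is trivial: if $f \in \C$, the datum $(U,g) = (X,f)$ witnesses local detectability at every point. Conversely, suppose $f \in C^0(X)$ has the property that for every $x \in X$ there exist an open neighborhood $U_x \subset X$ of $x$ and $g_x \in \C$ such that $f|_{U_x} = g_x|_{U_x}$. To show $f \in \C$ I must verify $f|_{X_\alpha} \in \C_\alpha$ for each $\alpha$. Fix $\alpha$ and $x \in X_\alpha$; then $U_x \cap X_\alpha$ is an open neighborhood of $x$ in $X_\alpha$, $g_x|_{X_\alpha} \in \C_\alpha$ by definition of $\C$, and $f|_{U_x \cap X_\alpha}$ agrees with the restriction of $g_x|_{X_\alpha}$ there. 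Applying the local detectability of $\C_\alpha$ gives $f|_{X_\alpha} \in \C_\alpha$, completing the proof. The only subtlety worth flagging is this interplay — one must pass from a covering of $X$ by open neighborhoods $U_x$ to a covering of $X_\alpha$ by the open sets $U_x \cap X_\alpha$ before invoking local detectability inside $(X_\alpha,\C_\alpha)$.
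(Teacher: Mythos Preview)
Your proof is correct and follows essentially the same approach as the paper's: the paper also singles out local detectability as the main point and proves it by the identical argument of intersecting the neighborhoods $U_x$ with $X_\alpha$ and invoking local detectability of $\C_\alpha$. The only difference is that the paper omits the routine verifications of the subalgebra and $C^\infty$-closedness conditions that you spell out explicitly.
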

\begin{proof}
The proof is straightforward. We check that $\C$ is a locally detectable $\R$-algebra. Let $f$ be in $C^0(X)$. Assume further that, for each $x\in X$, there are an open neighborhood $U_x$ of $x$ and $h_x\in \C$ such that $f|_{U_x}=h_x|_{U_x}$. Since $h_x|_{X_\alpha}\in \C_\alpha$, $f|_{U_x\cap X_\alpha}=h_x|_{U_x\cap X_\alpha}$ and $\C_\alpha$ is locally detectable, it follows that $f|_{X_\alpha}\in \C_\alpha$ and hence $f\in \C$ by definition. 
\end{proof}
 
\begin{prop}\label{prop:2}
The functor $\calL: \VB\rightarrow \Lfb(S)$ is essentially surjective.
\end{prop}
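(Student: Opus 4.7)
The plan is to realize each locally free $\calO_S$-module $\calF$ of bounded rank as the sheaf of sections $\calL_E$ of a vector bundle $E$ constructed by gluing trivial bundles along transition cocycles extracted from $\calF$. Concretely, I would first choose an open cover $\{U_\alpha\}_{\alpha \in J}$ of $S$ together with trivializations $\psi_\alpha : \calF|_{U_\alpha} \xrightarrow{\cong} \calO_S^{n_\alpha}|_{U_\alpha}$, where $\{n_\alpha\}$ is bounded by hypothesis. The composite $\psi_\beta \circ \psi_\alpha^{-1}$ is a sheaf automorphism of the free $\calO_S$-module of rank $n$ on $U_\alpha \cap U_\beta$ and is therefore given by multiplication by an invertible matrix $g_{\alpha\beta} \in GL_n(\calO_S(U_\alpha \cap U_\beta))$ satisfying the cocycle condition. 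Since each entry of $g_{\alpha\beta}$ lies in $\C_{U_\alpha \cap U_\beta}$ and $\C$ is $C^\infty$-closed, the assignment $x \mapsto g_{\alpha\beta}(x)$ furnishes a morphism of differential spaces $U_\alpha \cap U_\beta \to GL_n(\R)$.

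Next I would form the topological space $E := \bigl(\coprod_\alpha U_\alpha \times \R^{n_\alpha}\bigr)/\!\!\sim$, where $(x,v) \in U_\alpha \times \R^{n_\alpha}$ is identified with $(x, g_{\beta\alpha}(x)v) \in U_\beta \times \R^{n_\beta}$ on overlaps, and let $\pi: E \to S$ be the evident projection. The canonical map $j_\alpha : U_\alpha \times \R^{n_\alpha} \to E$ is an open embedding onto $\pi^{-1}(U_\alpha)$, and the images cover $E$. Equipping each $U_\alpha \times \R^{n_\alpha}$ with its product stratifold structure from Section \ref{section:App}, I would invoke Lemma \ref{lem:1} on the cover $\{j_\alpha(U_\alpha \times \R^{n_\alpha})\}$ to endow $E$ with a differential space structure $\C_E$ with respect to which each $\phi_\alpha := j_\alpha^{-1} : \pi^{-1}(U_\alpha) \xrightarrow{\cong} U_\alpha \times \R^{n_\alpha}$ is an isomorphism. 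The four axioms of Definition \ref{defn:bundles} are then immediate from the construction, and Proposition \ref{prop:stratifold_bundles} upgrades $(E, \C_E)$ to a stratifold and places $\pi \in \VB$.

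It then remains to exhibit a natural isomorphism $\calL_E \cong \calF$. The local bases $s_i^\alpha := \phi_\alpha^{-1} \circ e_i \in \calL_E(U_\alpha)$ of Proposition \ref{prop:1} correspond under $\psi_\alpha^{-1}$ to local bases of $\calF(U_\alpha)$, giving local $\calO_S$-module isomorphisms $\lambda_\alpha : \calL_E|_{U_\alpha} \cong \calF|_{U_\alpha}$. By design, the change-of-basis matrix between $\{s_i^\alpha\}$ and $\{s_i^\beta\}$ on $U_\alpha \cap U_\beta$ is exactly the cocycle $g_{\alpha\beta}$ used in the gluing of $E$, so the $\lambda_\alpha$ agree on overlaps and glue to a global isomorphism of $\calO_S$-modules.

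The principal obstacle I anticipate is verifying that Lemma \ref{lem:1} genuinely produces the right differential space structure, namely that the gluing maps $\tau_{\beta\alpha}(x,v) := (x, g_{\beta\alpha}(x)v)$ are morphisms between the product stratifolds $(U_\alpha \cap U_\beta) \times \R^n$, not merely continuous bijections. This is essentially the converse direction of Proposition \ref{prop:transition_function} and will be handled by pulling back functions in $\C_{(U_\alpha \cap U_\beta) \times \R^n}$: using the description of product stratifold functions via local retractions from Remark \ref{rem:substratifold} (iii) and Section \ref{section:App}, together with $C^\infty$-closedness of $\C$, the fiberwise operation $v \mapsto g_{\beta\alpha}(x)v$, which is polynomial in $v$ with coefficients in $\C_{U_\alpha \cap U_\beta}$, will be seen to lie in $\C_{(U_\alpha \cap U_\beta) \times \R^n}$, so that $\tau_{\beta\alpha}$ is a morphism of stratifolds as required.
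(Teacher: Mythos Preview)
Your proposal is correct and follows essentially the same approach as the paper: extract a cocycle from the local trivializations of $\calF$, glue trivial pieces $U_\alpha\times\R^{n_\alpha}$ via Lemma~\ref{lem:1} to build $(E,\C_E)$, and match $\calL_E$ back to $\calF$ by checking that the local isomorphisms agree on overlaps (the paper phrases this last step as a morphism of equalizer diagrams). You are in fact more explicit than the paper about the point that the gluing maps $(x,v)\mapsto(x,g_{\beta\alpha}(x)v)$ must be morphisms of product stratifolds---the paper absorbs this into the sentence ``we can see that the morphism $\pi$ is a vector bundle with trivializations''---and your appeal to Proposition~\ref{prop:stratifold_bundles} is a harmless addendum, since membership in $\VB$ only requires $(E,\C_E)$ to be a differential space.
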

\begin{proof}
If $\F\in \Lfb(S)$, then there is an open cover $\{U_\alpha\}$ and isomorphisms $\varphi_\alpha: \F|_{U_\alpha}\rightarrow \calO_S^
{n_\alpha}|_{U_\alpha}$. Put $U_{\alpha,\beta}:=U_\alpha\cap U_\beta$. We have 
a transition function $g_{\alpha\beta}:U_
{\alpha\beta}\rightarrow \text{GL}_n(\R)$ induced by the isomorphisms 
$\varphi_\alpha$ and $\varphi_\beta$. 
More precisely, consider the sequence of morphisms of $\calO_{U_{\alpha\beta}}$-modules
\[
\xymatrix@C35pt@R25pt{
\calO_S|_{U_{\alpha\beta}}\ar[r]^{in_j} &\calO_S^n|_{U_{\alpha\beta}}\ar[r]^{\varphi^{-1}_\beta}_-{\cong} & 
\F|_{U_{\alpha\beta}}\ar[r]^{\varphi_\alpha}_-{\cong} &\calO_S^n|_{U_{\alpha\beta}} \ar[r]^{p_i} &\calO_S|_{U_{\alpha\beta}}, 
}
\]
where $in_j$ and $p_i$ denote the inclusion into the $j$th factor and the projection onto the $i$th factor, respectively. 
We define $u_{ij} \in  \calO_S|_{U_{\alpha\beta}}(U_{\alpha\beta})$ by 
$u_{ij} = p_i\varphi_\alpha\varphi_\beta^{-1}in_j ({\bf 1})$ with unit ${\bf 1}$ in  
$\calO_S|_{U_{\alpha\beta}}(U_{\alpha\beta})=\calO(U_{\alpha\beta})$. Then $g_{\alpha\beta}$ is defined by 
$g_{\alpha\beta} (x) = (u_{ij}(x))$ for $x \in U_{\alpha\beta}$. 

For each $x\in U_\alpha\cap U_\beta\cap U_\gamma$, these transition functions satisfy 
the relation $g_{\alpha\beta}(x)g_{\beta\gamma}(x)=g_{\alpha\gamma}(x)$. This enables us to define 
a space $E$ by the quotient space $(\bigsqcup_\alpha U_\alpha\times \R^{n_\alpha})/\sim$, where 
the equivalence relation $
\sim$ is defined by $(x,\mathsf{v})\sim(y,\mathsf{w})$ if $x=y\in U_{\alpha\beta}$ and $\mathsf{v}=g_{\alpha\beta}(x)\mathsf{w}$. Let 
$\rho:\bigsqcup_\alpha U_\alpha\times \R^{n_\alpha}\rightarrow E$ be the canonical projection. 
Then we define a continuous map $\pi:E\rightarrow S$ by $\pi(\rho(x,\mathsf{v}))=x$. 
Since the restriction $\rho_\alpha: U_\alpha\times\R^{n_\alpha}\rightarrow \pi^{-1}(U_\alpha)$ 
is a homeomorphism, it gives a subalgebra $\C_\alpha$ of $C^0(\pi^{-1}(U_\alpha))$ which is naturally isomorphic to 
$\C_{U_\alpha\times\R^{n_\alpha}}$. By Lemma \ref{lem:1}, we have a differential space $(E,\C_E)$. 

If $f\in \C_S$, then $(f\circ \pi)|_{\rho_\alpha(U_\alpha\times\R^{n_\alpha})}\in \C_\alpha$ since the projection 
$U_\alpha\times\R^{n_\alpha}\rightarrow U_\alpha$ is a morphism of differential spaces; see Section 6. 
This implies that $f\circ\pi\in \C_E$ and hence the map $\pi: (E,\C_E)\rightarrow (S,\C_S)$ 
is a morphism of differential spaces. Moreover, we can see that the morphism $\pi$ 
is a vector bundle with trivializations $(\{U_\alpha\},\{\rho_\alpha\})$. 

We shall show that $\calL_E$ is isomorphic to $\F$. For $s\in \calL_E(U_\alpha)$, we define $\hat{s}\in \calO_S(U_\alpha)^{n_\alpha}$ 
by the composite 
$pr_2\circ \rho_\alpha^{-1}\circ s:U_\alpha\rightarrow\pi^{-1}(U_\alpha)\rightarrow U_\alpha\times\R^n\rightarrow\R^n$. 
Since $\psi_\alpha:\calL_E(U_\alpha)\rightarrow \calO_S(U_\alpha)^{n_\alpha}$ defined by $\psi_\alpha(s)=\hat{s}$ is an isomorphism, 
it  gives rise to an isomorphism $\psi_\alpha:\calL_E|_{U_\alpha}\rightarrow \calO_S^{n_\alpha}|_{U_\alpha}$. 
The definitions of $\psi_\alpha$ and $E$ allow us to deduce that 
$\varphi_\alpha^{-1}\circ \psi_\alpha =\varphi_\beta^{-1}\circ \psi_\beta$. Therefore, we have a morphisms of equalizers 
\[
\xymatrix@C35pt@R20pt{
\calL_F(U) \ar[r] & \prod_\alpha \calL_F(U_\alpha \cap U) \ar@<0.5ex>[r]^-{res^{\alpha}_{\alpha\beta}}
\ar@<-0.5ex>[r]_-{res^{\beta}_{\alpha\beta}} \ar[d]^{\varphi_\alpha^{-1}\psi_\alpha}_{\cong} 
& \prod_{\alpha\beta} \calL_F(U_\alpha \cap U_\beta \cap U) 
\ar[d]^{\varphi_\alpha^{-1}\psi_\alpha=\varphi_\beta^{-1}\psi_\beta}_{\cong} \\
\F(U) \ar[r] & \prod_\alpha \F(U_\alpha \cap U) \ar@<0.5ex>[r]^-{res^{\alpha}_{\alpha\beta}}
\ar@<-0.5ex>[r]_-{res^{\beta}_{\alpha\beta}} & \prod_{\alpha\beta} \F(U_\alpha \cap U_\beta \cap U) . 
}
\]
This yields that $\calL_E\cong \F$ as an $\calO_S$-module. 
Hence the functor $\calL$ is essentially surjective.
\end{proof}

\begin{prop}\label{prop:3}
The functor $\calL$ is fully faithful.
\end{prop}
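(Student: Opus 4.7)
The plan is to prove faithfulness and fullness of $\calL$ separately, in both cases exploiting the local constant sections $e_i$ supplied by Lemma~\ref{lem:sections} pulled back through trivializations.

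For faithfulness, the key point is that through every point of $E$ there passes a section of $\calL_E$ defined on a trivializing neighborhood. Given $f, g \in \text{Hom}_{\VB}(E,F)$ with $\calL_f = \calL_g$ and $e \in E$ with $p = \pi_E(e)$, I pick a trivialization $\phi_\alpha : \pi_E^{-1}(U_\alpha) \to U_\alpha \times \R^{n_\alpha}$ with $p \in U_\alpha$ and write $\phi_\alpha(e) = (p, \sum_i v_i \mathsf{e}_i)$. The element $s := \sum_i v_i \cdot (\phi_\alpha^{-1} \circ e_i)$ lies in $\calL_E(U_\alpha)$ by Lemma~\ref{lem:sections} and the $\calO_S(U_\alpha)$-module structure of Proposition~\ref{prop:module}, and evaluates to $e$ at $p$; hence $f(e) = \calL_f(s)(p) = \calL_g(s)(p) = g(e)$.

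For fullness, I start from $\psi : \calL_E \to \calL_F$ in $\Lfb(S)$ and refine a common trivializing cover $\{U_\alpha\}$ for $E$ and $F$, with trivializations $\phi_\alpha$ and $\eta_\alpha$ of ranks $n_\alpha$ and $m_\alpha$. Writing $\sigma_i^\alpha := \phi_\alpha^{-1} \circ e_i \in \calL_E(U_\alpha)$ and the analogous frames $\tau_j^\alpha \in \calL_F(U_\alpha)$, $\calO_S$-linearity of $\psi$ gives matrices $(u_{ji}^\alpha)$ with entries in $\C_{U_\alpha}$ such that $\psi(\sigma_i^\alpha) = \sum_j u_{ji}^\alpha \tau_j^\alpha$. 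I then define
\[
f_\alpha : \pi_E^{-1}(U_\alpha) \to \pi_F^{-1}(U_\alpha), \qquad f_\alpha(\phi_\alpha^{-1}(p,v)) := \eta_\alpha^{-1}\bigl(p, (u_{ji}^\alpha(p))\, v\bigr),
\]
which is fiberwise linear by construction. On overlaps $U_\alpha \cap U_\beta$, the identity $f_\alpha = f_\beta$ follows from $\calO_S$-linearity of $\psi$ combined with Proposition~\ref{prop:transition_function}, which makes the transition matrices of both $E$ and $F$ morphisms of stratifolds. Gluing yields a single morphism $f : E \to F$. Finally $\calL_f$ and $\psi$ agree on the local frames $\sigma_i^\alpha$, and by $\calO_S(U_\alpha)$-linearity they agree on all of $\calL_E(U_\alpha)$; the sheaf axioms then give $\calL_f = \psi$.

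The main obstacle is verifying that each $f_\alpha$ is a morphism of differential spaces, rather than merely a continuous fiberwise linear map. After composing with the trivializations, the content reduces to showing that the map $(p,v) \mapsto (p, A^\alpha(p) v)$ on $U_\alpha \times \R^{n_\alpha}$, with entries $u_{ji}^\alpha \in \C_{U_\alpha}$, lies in $\mathsf{Diff}$. This requires the precise description of $\C_{U_\alpha \times \R^{n_\alpha}}$ developed in Section~6 via local retractions, and in particular Proposition~\ref{prop:products}, which provides the universal property of the product stratifold needed to reduce the differentiability of $f_\alpha$ to that of its coordinate components $u_{ji}^\alpha$ and the polynomial evaluation in $v$, the latter handled by $C^\infty$-closedness of $\C_{U_\alpha \times \R^{n_\alpha}}$.
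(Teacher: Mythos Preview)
Your proof is correct and follows the same overall architecture as the paper's: faithfulness via local sections through every point, fullness by extracting matrix entries from a sheaf morphism, building local bundle maps, and gluing. The paper's faithfulness argument is terser (one line: $f\circ s = g\circ s$ for all sections forces $f=g$), but your explicit construction of a section through a given point via the frame $\phi_\alpha^{-1}\circ e_i$ is exactly the content behind that line.

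The one substantive difference is how you verify that the local map $(p,v)\mapsto (p, A^\alpha(p)v)$ is a morphism of differential spaces. The paper checks this directly against the explicit description of $\C_{U_\alpha\times\R^m}$ given at the start of Section~\ref{section:App}: it observes that the restriction to each stratum $(U_\alpha\cap S^i)\times\R^n$ is smooth and that composition with a local retraction $r_x\times id$ leaves $l\circ\eta_\alpha$ unchanged. You instead invoke the universal property of the product (Proposition~\ref{prop:products}) to reduce to the two components, then use $C^\infty$-closedness to see that each coordinate $\sum_i u^\alpha_{ji}(p)\,v_i$ lies in $\C_{U_\alpha\times\R^{n_\alpha}}$. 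Your route is a bit more categorical and avoids unpacking the local-retraction description; the paper's route is more hands-on but closer to the definition. Both are valid. One small remark: your appeal to Proposition~\ref{prop:transition_function} for the overlap identity $f_\alpha=f_\beta$ is not really needed---that identity is purely set-theoretic and follows from $\psi$ being a sheaf morphism; the fact that the transition functions are morphisms of stratifolds plays no role at that step.
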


\begin{proof}
Let $f$ and $g$ be morphisms from $(E,\pi_E)$ to $(F,\pi_F)$. Assume that $\calL_f=\calL_g$. 
Then for all sections $s\in\Gamma(U,E)$, we see that $f\circ s=g\circ s$. This implies that $f=g$. 

Suppose that $f:\calL_E\rightarrow\calL_F$ is a morphism in 
$\Lfb(S)$ and $\varphi_\alpha:\calL_E|_{U_\alpha}\rightarrow \calO_S^n|_{U_\alpha}$ and 
$\psi_\alpha:\calL_F|_{U_\alpha}\rightarrow \calO_S^m|_{U_\alpha}$ are trivializations which is induced by 
the given trivializations of $E$ and $F$; see Proposition \ref{prop:1}. Then we obtain the following commutative diagram
\[
\xymatrix@C35pt@R20pt{\calO_S^n|_{U_\alpha} \ar[r]^{\varphi_\alpha^{-1}}_\cong \ar[d]_{t_\alpha} & 
\calL_E|_{U_\alpha} \ar[d]^f \\ \calO_S^m|_{U_\alpha} & \calL_F|_{U_\alpha} . \ar[l]^{\psi_\alpha}_\cong}
\]
The morphism $t_\alpha$ induces a morphism $t_\alpha:U_\alpha\rightarrow \text{Mat}_{m,n}(\R)$ of stratifolds with such way of defining $g_{\alpha\beta}$ in  the proof of Proposition \ref{prop:2}. 
We define a map $\eta_\alpha:E|_{U_\alpha}\cong U_\alpha\times\R^n\rightarrow U_\alpha\times\R^m\cong F|_{U_\alpha}$ by 
$\eta_\alpha(x,\mathsf{v})=(x,t_\alpha(x)\mathsf{v})$. This map is a morphism of stratifolds since the restriction on each manifold 
$(U_\alpha\cap S^i)\times\R^n$ is smooth and for $l\in\C_{S\times\R^m}$, there are local retraction 
$r_x: U_x\rightarrow U_x\cap S^i$ and open set $V$ of $\R^n$ such that 
$l\circ\eta_\alpha|_{U_x\times V}=l\circ\eta_\alpha(r_x\times id_V)$. 
Then the maps $\eta_\alpha$ induce a morphism $\eta:E\rightarrow F$ with 
$\eta_\alpha = \eta |_{\pi^{-1}(U_\alpha)} : \pi_E^{-1}(U_\alpha) \to \pi_F^{-1}(U_\alpha)$. 
In fact, we have a commutative diagram
\[
\xymatrix@C35pt@R18pt{\calO_S^n|_{U_{\alpha,\beta}} \ar[rr]^{\varphi_\alpha\circ\varphi_\beta^{-1}} \ar[rd] \ar[ddd]_{t_\beta} &&\calO_S^n|_{U_{\alpha,\beta}} \ar[ddd]^{t_\alpha} \\
&\calL_E|_{U_{\alpha,\beta}} \ar[ru] \ar[d]^f & \\
&\calL_F|_{U_{\alpha,\beta}} \ar[rd] & \\
\calO_S^m|_{U_{\alpha,\beta}} \ar[ru] \ar[rr]_{\psi_\alpha\circ\psi_\beta^{-1}} & &\calO_S^m|_{U_{\alpha,\beta}} .}
\]
By the construction of $\calL_\eta$, it is readily seen that $\calL_\eta=f$ and hence the functor $\calL$ is full.
\end{proof}

Thanks to Propositions \ref{prop:2} and \ref{prop:3}, we see that the functor $\calL$ is an equivalence of categories. 
We shall prove that the category $\Lfb(S)$ is equivalent to the full subcategory of the category $\Gamma(S,\calO_S)$-$\Mod$ consisting of finitely generated projective modules, which is denoted by $\Fgp(\Gamma(S,\calO_S))$. 

Following Morye, we say that the {\it Serre-Swan theorem} holds for a locally ringed space $(X,\calO_X)$ if the global section functor 
induces an equivalence of categories between  $\Lfb(X)$ and $\Fgp(\Gamma(X,\calO_X))$. 
The following theorem then completes the proof of Theorem \ref{thm:3}. 

\begin{thm}\label{prop:Morye}\text{\em(Morye, \cite[Corollary 3.2]{Morye}) }
Let $(X,\calO_X)$ be a locally ringed space such that $X$ is a paracompact Hausdorff space of finite covering dimension, 
and $\calO_X$ is a fine sheaf of rings (cf. Definition \ref{defn:fine sheaf}). Then the Serre-Swan theorem holds for $(X,\calO_X)$.
\end{thm}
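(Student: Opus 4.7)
The plan is to establish the equivalence in three stages: first, verify that $\Gamma(X,-)$ takes values in $\Fgp(\Gamma(X,\calO_X))$; second, prove essential surjectivity; third, prove fully faithfulness. All the substantive work occurs in the first stage.

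The crucial step is to show that every $\F\in\Lfb(X)$ is a direct summand of a free sheaf $\calO_X^N$ with $N$ finite. Let $n_0$ bound the ranks of $\F$ locally. Pick a locally finite trivializing open cover $\{U_\alpha\}$; because $X$ has finite covering dimension $d$, one refines so that the index set partitions into $d+1$ color classes $I_0,\ldots,I_d$ whose members are pairwise disjoint within each class. Fineness of $\calO_X$ provides a subordinate partition of unity $\{\rho_\alpha\}$ by global sections. On each color class $I_j$, the disjointness allows one to glue the local frames of $\calO_X^{n_0}|_{U_\alpha}$, rescaled by $\rho_\alpha$ and extended by zero, into $n_0$ genuine global sections of $\F$. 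Summing across the $d+1$ classes produces an $\calO_X$-linear map $\phi:\calO_X^{(d+1)n_0}\to \F$ that is surjective on every stalk. Its kernel $\mathcal{K}$ is stalkwise the kernel of a surjection between finitely generated free modules over the local ring $\calO_{X,x}$, hence stalkwise free and in $\Lfb(X)$. A splitting of $\phi$ exists globally because $\text{Hom}_{\calO_X}(\F,\mathcal{K})$ is a module over the fine sheaf $\calO_X$, hence soft, so $H^1(X;\text{Hom}_{\calO_X}(\F,\mathcal{K}))=0$ and the sequence $0\to\mathcal{K}\to\calO_X^N\to\F\to 0$ splits with $N=(d+1)n_0$.

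Applying $\Gamma(X,-)$ to this split sequence realizes $\Gamma(X,\F)$ as a direct summand of $\Gamma(X,\calO_X)^N$, hence as a finitely generated projective module. For essential surjectivity, write $M\in\Fgp(\Gamma(X,\calO_X))$ as the image of an idempotent $e$ in the matrix ring over $\Gamma(X,\calO_X)$ acting on $\Gamma(X,\calO_X)^N$; then $e$ defines a global endomorphism of $\calO_X^N$, whose image $\F$ lies in $\Lfb(X)$ and satisfies $\Gamma(X,\F)\cong M$. For fully faithfulness, the same decomposition of $\F$ and $\mathcal{G}$ as direct summands of free sheaves reduces $\text{Hom}_{\calO_X}(\F,\mathcal{G})\to \text{Hom}_{\Gamma(X,\calO_X)}(\Gamma(X,\F),\Gamma(X,\mathcal{G}))$ to the trivial identity for free modules, which commutes with extracting corners by idempotents.

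The principal obstacle is the colored gluing together with the cohomological vanishing that assembles local splittings into a global one. Both the fineness of $\calO_X$, which produces the partition of unity and ensures the softness of sheaf Homs, and the finite covering dimension of $X$, which bounds $N$, enter essentially at this single step; once $\F$ is exhibited as a direct summand of a finitely generated free sheaf, the rest of the argument is essentially formal.
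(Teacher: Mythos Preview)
The paper does not supply a proof of this theorem: it is quoted verbatim as Morye's result \cite[Corollary 3.2]{Morye} and used as a black box to finish Theorem~\ref{thm:3}. Your proposal therefore goes beyond the paper by actually sketching the argument, and the sketch you give is essentially Morye's own proof: the Ostrand-type colored refinement coming from the bound on covering dimension, the partition of unity from fineness to build finitely many global generators, and the vanishing of $H^1$ with coefficients in the (soft) sheaf $\mathcal{H}om_{\calO_X}(\F,\mathcal{K})$ to split the resulting surjection. Each of the ingredients you invoke is standard, and you have identified correctly where the two hypotheses (fineness, finite covering dimension) enter.

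One point worth making explicit in your write-up: when you assert that $\rho_\alpha s_i^\alpha$ generate $\F_x$ at a stalk where $\rho_\alpha(x)\neq 0$, you are using that $(X,\calO_X)$ is \emph{locally} ringed, so that $\rho_\alpha$ is a unit in $\calO_{X,x}$. Likewise, the identification of the splitting obstruction with $H^1(X;\mathcal{H}om_{\calO_X}(\F,\mathcal{K}))$ rather than an abstract $\mathrm{Ext}^1$ uses that $\F$ is locally free. Both are true here but deserve a word. With those small clarifications your argument is complete and matches Morye's; the paper itself simply cites the result.
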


The structure sheaf $\calO_S$ of a stratifold $(S, \C)$ is fine and the underlying space $S$ is paracompact; 
see Remark \ref{rem:substratifold}(iii).  In order to prove Theorem \ref{thm:3}, 
it is thus sufficient to show that the covering dimension $\text{dim\,}S$ of $S$ is finite. 

\begin{thm}\label{thm1}
\cite[Proposition 5.1 in chapter 3]{Pears} 
Any $n$-dimensional paracompact manifold $M$ (without boundary)  has covering dimension $\dim M = n$.
\end{thm}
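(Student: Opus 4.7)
The plan is to establish the equality $\dim M = n$ by proving the two inequalities separately, with Lebesgue's classical covering theorem as the key analytic ingredient, and paracompactness as the topological tool that lets us pass from local to global statements.

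For the lower bound $\dim M \geq n$, I would select any chart $U \subset M$ homeomorphic to $\R^n$ and pick inside it a compact set $K \subset U$ homeomorphic to the closed unit cube $[0,1]^n$. Since $M$ is paracompact Hausdorff it is normal, and the compact set $K$ is closed in $M$. The monotonicity of covering dimension under closed subspace inclusions in normal spaces then gives $\dim M \geq \dim K = \dim [0,1]^n = n$, the last equality being Lebesgue's theorem.

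For the upper bound $\dim M \leq n$, I would begin by covering $M$ with charts $\{U_\alpha\}$, each homeomorphic to an open subset of $\R^n$, and use paracompactness to refine this to a locally finite open cover. Each $U_\alpha$ satisfies $\dim U_\alpha \leq n$, since open subsets of $\R^n$ inherit covering dimension at most $n$ from $\R^n$ itself (again Lebesgue, plus the monotonicity of $\dim$ for open subsets of a Euclidean space). To conclude, I would invoke the locally finite sum theorem for covering dimension in paracompact spaces, which states that if $X$ is paracompact and admits a locally finite closed cover (obtained from the open one by the standard shrinking lemma available in paracompact normal spaces) by sets of dimension $\leq n$, then $\dim X \leq n$.

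The main obstacle is the substantive content of Lebesgue's covering theorem $\dim [0,1]^n = n$, which traditionally requires Sperner's lemma or Brouwer's fixed point theorem; once this is accepted, the rest is built out of formal properties of $\dim$ (monotonicity under closed subspaces, behaviour on open subsets of $\R^n$) and the locally finite sum theorem in the paracompact setting. All of these are standard parts of the dimension-theoretic toolkit compiled in \cite{Pears}, so our role is really to orchestrate them in the manifold setting rather than to prove new content.
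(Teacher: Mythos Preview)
The paper does not supply its own proof of this statement; it simply quotes it as \cite[Proposition 5.1 in chapter 3]{Pears} and uses it as a black box in the argument that a finite-dimensional stratifold has finite covering dimension. Your outline is a correct and standard route to the result---Lebesgue's theorem on a closed cube inside a chart, together with monotonicity on closed subspaces of normal spaces, gives $\dim M \geq n$; a locally finite chart cover, shrunk to a closed cover and fed into the locally finite sum theorem, gives $\dim M \leq n$---and this is essentially how the result is established in Pears and in other dimension-theory references. There is thus nothing in the paper to compare your argument against, but the sketch itself is sound.
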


\begin{thm}\label{thm2}
\cite[Proposition 5.11 in chapter 3]{Pears}
Let $X$ be a normal space and $A$ and $B$ be subspaces of $X$ such that $X=A\cup B$. Then,
$\text{dim\,}X\leq \text{dim\,}A+\text{dim\,}B+1$.
\end{thm}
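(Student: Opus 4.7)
The plan is to use the standard open-cover characterization of covering dimension: $\dim X \leq n$ if and only if every finite open cover of $X$ admits an open refinement of order at most $n+1$ (where ``order'' counts the maximum number of members having a common point). Write $a = \dim A$ and $b = \dim B$, fix a finite open cover $\mathcal{U} = \{U_1, \ldots, U_k\}$ of $X$, and aim to produce an open refinement of $\mathcal{U}$ in $X$ whose order is at most $a + b + 2 = (a+b+1)+1$.

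First, restrict $\mathcal{U}$ to $A$ and $B$ to obtain the covers $\{U_i \cap A\}$ and $\{U_i \cap B\}$. By the hypothesis on $\dim A$ and $\dim B$, these admit open refinements $\mathcal{V} = \{V_j\}$ in $A$ and $\mathcal{W} = \{W_\ell\}$ in $B$ of orders at most $a+1$ and $b+1$ respectively, with assignments $V_j \subseteq U_{i(j)} \cap A$ and $W_\ell \subseteq U_{i(\ell)} \cap B$. The bulk of the argument is to lift these into families of open sets in $X$ while preserving the refinement property and, crucially, the order. Using normality of $X$, one performs the standard shrinking of $\mathcal{U}$ and combines it with a Urysohn-type extension to produce open sets $\widetilde{V}_j \subseteq U_{i(j)}$ and $\widetilde{W}_\ell \subseteq U_{i(\ell)}$ in $X$ satisfying $\widetilde{V}_j \cap A = V_j$ and $\widetilde{W}_\ell \cap B = W_\ell$, with orders of the two families separately bounded by $a+1$ and $b+1$.

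Then the combined family $\{\widetilde{V}_j\} \cup \{\widetilde{W}_\ell\}$ refines $\mathcal{U}$ and covers $X$ because it covers $A \cup B = X$. At a point $x \in X$, the number of $\widetilde{V}_j$ containing $x$ is at most $a+1$ (since it agrees with the count inside an $A$-slice by construction, after the shrinking step is used to keep the $\widetilde{V}$-order globally bounded), and similarly for the $\widetilde{W}_\ell$, so the total order is at most $a+b+2$ as required.

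The main obstacle is precisely the extension step: lifting an open refinement from a subspace to $X$ without inflating the order. Done naively, if $V_j = \widetilde{V}_j^{0} \cap A$ for arbitrary open $\widetilde{V}_j^{0}$ in $X$, the global order of $\{\widetilde{V}_j^{0}\}$ can be arbitrarily large. The fix is to couple the extension with a shrinking of $\mathcal{U}$: apply normality to separate the closed cores of successively refined shrinkings, and use induction on the number of sets in the cover to trim each $\widetilde{V}_j$ so that its footprint outside $A$ does not meet $\widetilde{V}_{j'}$ unless $V_j \cap V_{j'} \neq \emptyset$, and likewise on the $B$ side. This ``order-preserving extension from a subspace of a normal space'' is the technical lemma underlying Pears' exposition, and once it is in hand, the proof is completed by the bookkeeping just described.
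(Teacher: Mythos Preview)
The paper does not give its own proof of this statement: it is quoted verbatim from Pears (\emph{Dimension theory of general spaces}, Chapter~3, Proposition~5.11) and used as a black box to deduce the corollary that a finite-dimensional stratifold has finite covering dimension. There is therefore no ``paper's proof'' to compare your proposal against.

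That said, let me comment on your sketch on its own merits. You correctly identify the crux: lifting an open refinement of bounded order from a subspace to the ambient normal space without increasing the order. For a \emph{closed} subspace this is the standard ``swelling'' lemma (one shrinks and separates using normality), and your description of the mechanism is accurate there. But the theorem is stated for \emph{arbitrary} subspaces $A$ and $B$, and in that generality the swelling argument you outline does not go through directly: normality of $X$ gives you separation of closed sets in $X$, not of relatively closed sets in $A$, and your inductive trimming ``so that its footprint outside $A$ does not meet $\widetilde{V}_{j'}$ unless $V_j \cap V_{j'} \neq \emptyset$'' tacitly uses that the $V_j$ have well-behaved closures in $X$. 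The classical proofs of this Urysohn-type inequality (in Pears, or Engelking's \emph{General Topology}) typically avoid this difficulty by working instead with the characterization of covering dimension via partitions between pairs of disjoint closed sets, and arguing by induction on $\dim A + \dim B$; the refinement-based route you propose is possible but requires a sharper extension lemma than the one you have sketched. As written, the proposal is an outline with the hard step deferred rather than a proof.
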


\begin{cor}
Any finite-dimensional stratifold has finite covering dimension. 
\end{cor}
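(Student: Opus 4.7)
The plan is to induct on the skeleton level $k$, setting $X_k := sk_k(S)$ and showing that each $X_k$ has finite covering dimension, where $n$ is the integer with $S = sk_n(S)$. The central ingredients are the filtration $X_0 \subseteq X_1 \subseteq \cdots \subseteq X_n = S$ by closed subspaces, together with the decomposition $X_k = X_{k-1} \cup S^k$ of each skeleton into a closed subspace and the $k$-th stratum.

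First I would record the necessary point-set preliminaries. The ambient space $S$ is locally compact Hausdorff with countable basis, hence paracompact by Remark \ref{rem:substratifold}(iii), and in particular normal. Each skeleton $X_k$ is closed in $S$ by axiom (2) of Definition \ref{defn:stratifold}, so $X_k$ inherits paracompactness and normality. By axiom (4), each stratum $S^k$ is a smooth $k$-manifold; it inherits second-countability and local compactness from $S$, so it is paracompact. Theorem \ref{thm1} therefore gives $\dim S^k = k$ (with the convention $\dim \emptyset = -\infty$ if $S^k$ is empty).

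The induction then runs as follows. The base case $X_0 = S^0$ is a paracompact zero-manifold, so $\dim X_0 = 0$ by Theorem \ref{thm1}. Assuming $\dim X_{k-1}$ is finite, I would apply Theorem \ref{thm2} to the normal space $X_k$ with its cover $X_k = X_{k-1} \cup S^k$ to obtain
\[
\dim X_k \leq \dim X_{k-1} + \dim S^k + 1 \leq \dim X_{k-1} + k + 1.
\]
Iterating from $k = 0$ up to $k = n$ gives an explicit finite bound on $\dim S = \dim X_n$.

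I do not anticipate a serious obstacle: the argument is essentially a bookkeeping exercise combining the two cited Pears theorems with the closed filtration supplied by the stratifold structure. The only point requiring mild care is the verification that each $S^k$ qualifies as a paracompact manifold so that Theorem \ref{thm1} applies, which follows immediately from second-countability, local compactness, and the Hausdorff property inherited from $S$.
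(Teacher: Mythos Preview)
Your argument is correct and follows essentially the same route as the paper: both proofs decompose $S$ into its strata, invoke Theorem \ref{thm1} to bound the covering dimension of each stratum, and then apply Theorem \ref{thm2} inductively to bound $\dim S$. The paper compresses this into two sentences, writing $S = S^1 \sqcup \cdots \sqcup S^n$ and appealing directly to the two theorems, whereas you spell out the induction along the skeletal filtration $X_k = sk_k(S)$ and verify the point-set hypotheses (normality of $X_k$, paracompactness of $S^k$) more carefully; the underlying argument is the same.
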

\begin{proof}
By the definition of a stratifold, we see that $S=S^1\sqcup S^2\sqcup...\sqcup S^n$, where $S^i$ is a manifold of dimension $i$. Theorems \ref{thm1} and \ref{thm2} imply that $\text{dim\,}S<\infty$.
\end{proof}

By definition, it follows that $\Gamma (S, \mathcal{O}_S) = \mathcal{O}_S(S) = \C$. Thus Proposition \ref{prop:Morye} and the results above 
enable us to deduce the following corollary. 

\begin{cor}\label{cor:1}
Let $(S,\C)$ be a stratifold and $\calO_S$ the structure sheaf. Then the global sections functor
$\Gamma(S,-):\Lfb(S)\rightarrow \Fgp(\C)$ 
is an equivalence. 
\end{cor}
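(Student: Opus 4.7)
The plan is to directly apply Morye's theorem (Theorem \ref{prop:Morye}) to the ringed space $(S, \calO_S)$ coming from the stratifold $(S, \C)$. This reduces the task to verifying the four hypotheses of that theorem, each of which has already been individually established or is immediate from the earlier development.

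First, I would note that $(S, \calO_S)$ is a locally ringed space: this is exactly the content of the earlier remark following diagram (3.1), which says that a stratifold gives rise to a locally ringed space since the stalk at each point is the $\R$-algebra of germs, which is local by \cite[Theorem 1.8]{GS}. Second, $S$ is paracompact Hausdorff: by Definition \ref{defn:stratifold}(1), $S$ is a locally compact Hausdorff space with countable basis, and Remark \ref{rem:substratifold}(iii) records that such spaces are automatically paracompact. Third, $\calO_S$ is a fine sheaf of rings: Remark \ref{rem:substratifold}(iii) explicitly states that the existence of smooth partitions of unity in $\C$ makes the structure sheaf fine, matching Definition \ref{defn:fine sheaf}. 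Fourth, $S$ has finite covering dimension: this is the content of the corollary immediately preceding, which follows from the stratification $S = S^1 \sqcup \cdots \sqcup S^n$ together with Theorems \ref{thm1} and \ref{thm2}.

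Having verified all hypotheses, Morye's theorem yields an equivalence $\Gamma(S, -) : \Lfb(S) \to \Fgp(\Gamma(S, \calO_S))$. To conclude, I would observe that the global sections of the structure sheaf of a stratifold are by construction the defining algebra, so $\Gamma(S, \calO_S) = \calO_S(S) = \C$; this is already noted in the sentence preceding the statement. Substituting this identification gives precisely the equivalence $\Gamma(S,-) : \Lfb(S) \to \Fgp(\C)$ claimed in the corollary.

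There is essentially no obstacle here, since every ingredient has been assembled in earlier sections; the proof amounts to a bookkeeping citation of Morye's theorem together with the four facts listed above. The only step that required genuine work is the finite covering dimension, which was handled in the preceding corollary by invoking the stratification and the classical dimension theory of Pears.
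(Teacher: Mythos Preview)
Your proposal is correct and follows essentially the same approach as the paper: both verify the hypotheses of Morye's theorem (locally ringed, paracompact Hausdorff, finite covering dimension, fine sheaf) using the results already established, and then identify $\Gamma(S,\calO_S)=\C$ to conclude. The paper's justification is terser but cites exactly the same ingredients you list.
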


We are now ready to prove the main theorem in this section. 

\begin{proof}[Proof of Theorem \ref{thm:3}] 
Corollary \ref{cor:1}, Proposition \ref{prop:2} and \ref{prop:3} yield Theorem \ref{thm:3}
\end{proof}

\begin{rem}\label{rem:comparison}
Theorem \ref{thm:affine_scheme} states that a stratifold $(S, \C)$ can be regarded as a subsheaf of an affine scheme 
of the form $\text{Spec } \!\mathcal{O}_S(S)$. Since the space $\text{Spec } \!\mathcal{O}_S(S)$ is compact, it follows that 
the real spectrum $\text{Spec}_r \mathcal{O}_S(S)$ is a proper subspace of the prime spectrum if $S$ is non-compact; see 
Proposition \ref{prop:sheaf}. Moreover, in general, there exists a point in $\text{Spec } \!\mathcal{O}_S(S)$ which is a maximal ideal 
but not in the real spectrum. Such a point is called a {\it ghost}; see \cite[8.22]{N}. 
However, Theorem \ref{thm:3} and the original Serre-Swan theorem yield 
that the category $\mathsf{VBb}_{(S, \C)}$ is equivalent to 
$\mathsf{VBb}_{\text{Spec } \!\mathcal{O}_S(S)}$ the category of vector bundles over the affine scheme 
$\text{Spec } \!\mathcal{O}_S(S)$ via the category $\Fgp(\Gamma(S,\calO_S))$; 
see \cite[Corollary 3.1]{Morye} and \cite[Theorem 6.2]{Shfarevich} for example.  
\end{rem}

\section{A local characterization of morphisms of stratifolds} \label{section3}

In this section, we describe morphisms of stratifolds inside the category of diffeological spaces. 
On the way we obtain a characterization of them by local data. We use the terminology of the book 
\cite{IZ} for diffeology. 




Let $\mathsf{Diffeology}$ be the category of diffeological spaces; see \cite{IZ}. 
We define a functor $k :  \mathsf{Stfd} \to \mathsf{Diffeology}$ by 
$k(S, \C) = (S, \D_\C)$ and $k(\phi) = \phi$ for a morphism $\phi : S \to S'$ of stratifolds, where 
$$
\D_\C=\{u : U \to S \mid U : \text{open in}  \ {\R^q}, q \geq 0, \phi\circ u \in C^\infty(U) \ \text{for any}  \ \phi \in \C \}. 
$$
Observe that a plot in $\D_\C$ is a set map. The functor $k$ is faithful, but not full; that is,  for a continuous map 
$f : S \to S'$, it is more restrictive to be a morphism of stratifolds $(S, \C) \to (S', \C')$ than to be a morphism of 
diffeological spaces $(S, \D_\C) \to (S', \D_{\C'})$. 

We recall the fully faithful functor $\ell : \mathsf{Mfd}  \to \mathsf{Diffeology}$ 
defined in \cite[4.3]{IZ}; see also \cite[Theorem 2.3]{C-S-W}. 
For a diffeological space $(X, \D)$, the set $X$ admits a topology which is referred as the $D$-topology. 
More precisely, a subset $A$ of $X$ is open if and only if $p^{-1}(A)$ is open for any plot $p \in \D$. 
We denote by $T(X, \D)$ the topological space. It is readily seen that the assignment of a topological space to a diffeological space 
induces a functor $T:  \mathsf{Diffeology}  \to \mathsf{Top}$.  

For a topological space $Y$, we define a diffeological space  $D(Y) = (Y, \D_Y)$ in which the set of plots $\D_Y$ consists of all continuous 
maps $U \to Y$ for any open subset $U$ of $\R^q$ and for $q \geq 0$. 

Let $\D_\R$ be the standard diffeology on $\R$. For each diffeological space $(X, \D)$, we have an $\R$-algebra
$F'((X, \D)):=\text{Hom}_{\mathsf{Diffeology}}((X, \D), (\R, \D_\R))$ with the algebra structure defined pointwise. 
A usual argument enables us to conclude that $F'$ gives rise to a contravariant functor 
$F':  \mathsf{Diffeology} \to {\mathbb R}\text{-}\mathsf{Alg}$. 

Summarizing the functors mentioned above, we have a diagram 
$$
\xymatrix@C35pt@R20pt{
\mathsf{Diffeology}  \ar@<0.5ex>[r]^-{T} \ar[ddr]^-{F'}& \mathsf{Top} \ar@<0.5ex>[l]^-{D}  \ar@<0.5ex>[dd]^-{C^0( \ )}\\
\mathsf{Stfd} \ar[u]_k \ar[rd]^-{F} & \\  
\mathsf{Mfd} \ar[u]_j \ar[r]_-{C^\infty ( \ )} \ar@/^2.0pc/[uu]^\ell& {\mathbb R}\text{-}\mathsf{Alg}  \ar@<0.5ex>[uu]^-{| \ |} 
}
\eqnlabel{add-3}
$$ 
in which the lower triangle and the left-hand side diagram are commutative. We observe that the functor 
$T$ is a left adjoint to $D$; see \cite[Proposition 3.1]{S-Y-H}. Moreover, it follows that 
the functor $C^0$ and $| \ |$ are adjoints. In fact, we have bijections
$$
\xymatrix@C30pt@R18pt{
\text{Hom}_{\mathsf{Top}}(X, |\F|)  \ar@<0.5ex>[r]^-{\Phi} & 
\text{Hom}_{\R\text{-}\mathsf{Alg}}(\F, C^0(X))  \ar@<0.5ex>[l]^-{\Psi}
}
$$
which are defined by the composites 
$\Phi(g) : \F \stackrel{\tau}{\to} \widetilde{\F} \stackrel{l}{\to} C^0(|\F|) \stackrel{g^*}{\to} C^0(X)$ with the inclusion $l$ 
and $\Psi(\varphi) : X \stackrel{\theta}{\to} |C^0(X)| \stackrel{|\varphi |}{\to} |\F|$, respectively. 
The bijectivity follows from a straightforward computation. 

We give here a characterization of morphisms of stratifolds in $\mathsf{Diffeology}$ with local data. 

\begin{prop}\label{prop:admissible_maps}
A morphism of diffeological spaces $f : (S, \D_\C) \to (S', \D_{\C'})$ stems from a morphism of stratifolds 
$f : (S, \C) \to (S', {\C'})$ 
if and only if for any $x \in S$, there exist 
local retractions $r_x : U_x \to U_x \cap S^i$ and $r_{f(x)} : V_{f(x)} \to V_{f(x)} \cap {S'}^j$ such that 
$r_{f(x)}\circ f\circ r_x = r_{f(x)}\circ f$ on some neighborhood of $x$. 
\end{prop}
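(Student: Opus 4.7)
The plan is to prove both implications by repeatedly invoking condition (4) of Definition \ref{defn:stratifold} — the stalk isomorphism $i^* : \C_x \cong C^\infty(S^i)_x$ given by restriction along a stratum inclusion.

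For the forward direction, assume $f$ arises from a stratifold morphism. Fix $x \in S^i$ with $f(x) \in {S'}^j$ and local retractions $r_x : U_x \to U_x \cap S^i$ and $r_{f(x)} : V_{f(x)} \to V_{f(x)} \cap {S'}^j$; by continuity of $f$, shrink $U_x$ so that $f(U_x) \subset V_{f(x)}$. Choose a smooth chart $(\psi_1, \ldots, \psi_j)$ on $V_{f(x)} \cap {S'}^j$ near $f(x)$ and set $\widetilde\psi_k := \psi_k \circ r_{f(x)} \in \C'_{V_{f(x)}}$. Both $\widetilde\psi_k \circ f$ and $\widetilde\psi_k \circ f \circ r_x$ lie in $\C_{U_x}$, since $f$ and $r_x$ are morphisms of stratifolds, and they agree on $U_x \cap S^i$ because $r_x$ restricts to the identity there. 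The stalk isomorphism then forces their germs at $x$ to coincide, so on a common neighborhood all $j$ coordinate functions of $r_{f(x)} \circ f$ and $r_{f(x)} \circ f \circ r_x$ agree; injectivity of the chart near $f(x)$ upgrades this to the desired equality of maps.

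For the converse, assume $f$ is a diffeological morphism satisfying the local retraction condition. I verify $g \circ f \in \C$ for $g \in \C'$ by checking it locally at each $x \in S^i$. The elements $g$ and $g \circ r_{f(x)}$ of $\C'_{V_{f(x)}}$ have the same restriction to $V_{f(x)} \cap {S'}^j$, so the stalk isomorphism at $f(x)$ gives $g = g \circ r_{f(x)}$ on a neighborhood of $f(x)$. Combining with the hypothesis,
\[
g \circ f \;=\; g \circ r_{f(x)} \circ f \;=\; g \circ r_{f(x)} \circ f \circ r_x
\]
on a neighborhood of $x$. The restriction $h := (g \circ r_{f(x)} \circ f)|_{U_x \cap S^i}$ is smooth: the stratum inclusion is a stratifold morphism, hence $f|_{U_x \cap S^i}$ is a diffeological morphism out of the manifold $U_x \cap S^i$, and postcomposition with $g \circ r_{f(x)}$ (which is locally in $\C'$) sends smooth plots on $U_x \cap S^i$ to smooth maps into $\R$. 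The stalk isomorphism along $S^i$ identifies $\C_{U_x \cap S^i}$ with $C^\infty(U_x \cap S^i)$, so $h \in \C_{U_x \cap S^i}$, and since $r_x$ is a stratifold morphism, $h \circ r_x \in \C_{U_x}$. Thus $g \circ f \in \C_{U_x}$ locally, and local detectability yields $g \circ f \in \C$.

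The main obstacle is the converse direction, where I must transfer information between the three categories of stratifolds, diffeological spaces, and manifolds in order to argue that $h$ is smooth on the stratum and that its pullback through $r_x$ lies in $\C$. The stalk isomorphism of condition (4) is the unifying tool, applied at $f(x)$ to replace $g$ by its extension $g \circ r_{f(x)}$, and at $x$ to identify locally extendable functions on $S^i$ with smooth ones.
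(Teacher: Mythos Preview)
Your proof is correct and follows essentially the same approach as the paper's. Both directions pivot on the stalk isomorphism of condition (4); for the converse you use the diffeological hypothesis exactly as the paper does---where the paper writes down an explicit chart $\varphi_i : V_x \to U_x \cap S^i$ and observes that $(f\circ l\circ\varphi_i)^*$ lands in $C^\infty$ because $f$ is diffeological, you phrase the same step as ``$f|_{U_x\cap S^i}$ is a diffeological morphism out of a manifold, hence $h$ is smooth''---and for the forward direction both arguments test equality against coordinate functions pulled back along $r_{f(x)}$ and invoke injectivity of $i^*$ to upgrade agreement on the stratum to agreement on a neighborhood.
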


\begin{proof} By definition, for any $x \in S$, there exists uniquely an integer $i$ such that $x$ is in $S^i$. 
Let $r_x : U_x \to U_x\cap S^i$ be a local retraction. The stratum $S^i$ is an $i$-dimensional manifold. Therefore, we have a local diffeomorphism 
$\varphi_i : V_x \to U_x\cap S^i$ for some open subset $V_x$ of $\R^i$. 
Let $u : V_x \to S$ be the composite $l\circ \varphi_i$, where $l : U_x\cap S^i \to S$ is the inclusion. 

Suppose that $f : (S, \D_\C) \to (S', \D_{\C'})$ is a morphism of diffeological spaces. 
In order to prove the ``if'' part, it suffices to show that for any $x \in S$, 
the induced morphism $f^* : \C_{f(x)}' \to {\mathcal Set}(S, {\mathbb R})_x$ factors through 
the $\R$-algebra $\C_x$ of germs, where ${\mathcal Set}(S, {\mathbb R})_x$ denotes the germ at $x$ of set maps $S \to {\mathbb R}$ associated with open neighborhoods of $x$. 
In fact, it follows that for any $\alpha \in \C'$, $f^*([\alpha]_{f(x)})= [\alpha\circ f]_x\in \C_x$. Then there exists $\beta \in \C$ such that $(\alpha \circ f) |_{W_x} = \beta|_{W_x}$ for some 
open subset $W_x$ of $S$.  Since the $\R$-algebra $\C$ is locally detectable,  we see that $f : S \to S'$ is a morphism of stratifolds and hence 
$k(f) = f$. 

Consider the following diagram 
$$
\xymatrix@C35pt@R18pt{
 & \C'_{f(x)} \ar[ld]_{(f\circ u)^*} \ar[rrd]^{f^*}& & \\
C^\infty(V_x)_{\varphi_i^{-1}(x)} \ar@<0.9ex>[r]^-{\psi_i^*}_-{\cong}& C^\infty(S^i)_x\ar@<0.9ex>[r]^-{r_x^*}_-{\cong} 
\ar@<1.2ex>[l]^-{\varphi_i^*} & \C_x \ar[r]_-s \ar@<1.2ex>[l]^-{l^*} & {\mathcal Set}(S, {\mathbb R})_x,    
}
$$
where $\psi_i$ is the local inverse of $\varphi_i$ and $s$ denotes the inclusion. 
Observe that $(f\circ u)^*  : \C'_{f(x)} \to C^\infty(V_x)_{\varphi_i^{-1}(x)}$ 
{\it is well defined since $f$ is a morphism of diffeological spaces}. 
For any $\alpha \in \C'$, we see that $\alpha = r_{f(x)}^*(\alpha)$ in $\C'_{f(x)}$; see \cite[page 19]{Kreck}. Thus it follows that 
\begin{eqnarray*}
r_{x}^*\psi_i^*(f\circ u)^*(r_{f(x)}^*(\alpha)) &=& \alpha \circ r_{f(x)} \circ f \circ l\circ \varphi_i \circ \psi_i \circ r_x \\
&=& \alpha \circ r_{f(x)} \circ f \circ r_x = \alpha \circ r_{f(x)}\circ  f = \alpha \circ f. 
\end{eqnarray*}
The third equality follows from the assumption. This implies that $f^*$ factors through the algebra $\C_x$ since 
$r_{x}^*\psi_i^*(f\circ u)^*(r_{f(x)}^*(\alpha))$ is in $\C_x$. 

We prove the ``only if" part. Let $f : (S, \C) \to (S', \C')$ be a morphism of stratifolds and 
$r_x : U_x \to U_x \cap S^i$ and $r_{f(x)} : V_{f(x)} \to V_{f(x)} \cap {S'}^j$ are appropriate local retractions. Without loss of generalities, 
we may assume that the image of $r_{f(x)}$ is contained in a local coordinate $V_{f(x)}'$ of the manifold 
$V_{f(x)}\cap {S'}^j$.  We have $r_x^*\circ \psi_i^*\circ (f\circ u)^* = f^*$. 
Observe that $(f\circ u)^* = \varphi_i^*\circ l^*\circ f^*$ and that the target of $f^*$ is the algebra $\C_x$.  
Let $\pi_k$ be an element in $\C^{\infty}( V_{f(x)} \cap {S'}^j)_{f(x)}$ obtained by extending the composite 
$V_{f(x)}' \stackrel{\cong}{\to} V' \stackrel{t}{\to} \R^j \stackrel{pr_k}{\to} \R$ by a bump function at $f(x)$, 
where $V_{f(x)}' \stackrel{\cong}{\to} V'$ is the homeomorphism of the local coordinate, 
$t$ is the inclusion and $pr_k$ denotes the projection onto the $k$th factor.  Then for the element 
$r_{f(x)}^*(\pi_k) \in \C_{f(x)}'$, we have $r_x^*\circ \psi_i^*\circ (f\circ u)^*(r_{f(x)}^*(\pi_k))= f^*\circ r_{f(x)}^*(\pi_k)$. 
The same argument as above enables us to deduce that
$$
\pi_k\circ r_{f(x)}\circ f\circ r_x =  \pi_k \circ r_{f(x)}\circ f
$$
on some neighborhood $W_x$ of $x$ and hence $r_{f(x)}\circ f\circ r_x = r_{f(x)}\circ f$ on $W_x$. 
This completes the proof. 
\end{proof}

\begin{cor} Let $M$ be a manifold and $(S, \C)$ a stratifold. Then the functor $k : {\mathsf{Stfd}} \to {\mathsf{Diffeology}}$ 
induces a bijection 
$$
k_* : \text{\em Hom}_{{\mathsf{Stfd}}}((M, C^\infty(M)), (S, \C)) \stackrel{\cong}{\to} 
\text{\em Hom}_{{\mathsf{Diffeology}}}((M,\D_{C^\infty(M)}), (S, \D_\C)).  
$$
\end{cor}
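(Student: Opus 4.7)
The plan is to reduce the statement to Proposition~\ref{prop:admissible_maps} combined with the faithfulness of $k$ noted in the previous section. Injectivity of $k_*$ is handled first and is immediate: the text explicitly records that $k : \mathsf{Stfd} \to \mathsf{Diffeology}$ is faithful, so two stratifold morphisms with the same underlying set map already coincide.

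For surjectivity, given a diffeological morphism $f : (M, \D_{C^\infty(M)}) \to (S, \D_\C)$, I would invoke the ``if'' direction of Proposition~\ref{prop:admissible_maps}, whose hypothesis requires, for each $x \in M$, local retractions $r_x : U_x \to U_x \cap M^i$ and $r_{f(x)} : V_{f(x)} \to V_{f(x)} \cap S^j$ satisfying $r_{f(x)} \circ f \circ r_x = r_{f(x)} \circ f$ on a neighborhood of $x$. The key observation is that, regarded as a stratifold via the embedding $\mathsf{Mfd} \to \mathsf{Stfd}$, a manifold $M$ is locally a single stratum: if $x$ lies in a connected component of $M$ of dimension $n$, then a sufficiently small open neighborhood $U_x$ of $x$ is entirely contained in the stratum $M^n$, so the identity $r_x = \mathrm{id}_{U_x}$ is a legitimate local retraction. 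With this choice the compatibility equation collapses to the tautology $r_{f(x)} \circ f = r_{f(x)} \circ f$.

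Since the existence of $r_{f(x)}$ on the target side is guaranteed by Remark~\ref{rem:substratifold}(ii), the hypothesis of Proposition~\ref{prop:admissible_maps} is automatically satisfied, producing a stratifold morphism $(M, C^\infty(M)) \to (S, \C)$ whose underlying set map is $f$; this is the desired preimage under $k_*$. I do not expect any serious obstacle here, as all the substantive content has already been packaged into Proposition~\ref{prop:admissible_maps}; the corollary is essentially the statement that when the source is a manifold the local retraction compatibility condition is vacuous. The only minor bookkeeping item is that, if $M$ has connected components of differing dimensions, one must choose $U_x$ inside the component containing $x$ before taking the identity retraction; after that accommodation the argument proceeds unchanged.
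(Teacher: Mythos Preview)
Your proposal is correct and matches the paper's intended argument: the corollary is stated immediately after Proposition~\ref{prop:admissible_maps} with no separate proof, precisely because the local-retraction compatibility condition becomes vacuous when the source is a manifold and one takes $r_x = \mathrm{id}_{U_x}$. Your handling of injectivity via faithfulness of $k$ and the minor bookkeeping about components of differing dimensions are both appropriate.
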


In the rest of this section, we give a subcategory of ${\mathsf{Diffeology}}$ which is equivalent to ${\mathsf{Stfd}}$ as a category. 

\begin{defn}\label{defn:admissible_maps}  Let $(S, \C)$ and  $(S', \C')$ be stratifolds. 
A continuous map $f : S \to S' $ is $(\C, \C')$-{\it admissible} if for any $x \in S$, there exist local retractions $r_x$ 
and $r_{f(x)}$ near $x$ and $f(x)$, respectively such that $r_{f(x)}\circ f \circ r_x = r_{f(x)}\circ f$ and for each $\phi \in \C'$, 
the restriction of $\phi \circ f : S \to {\mathbb R}$ to any stratum $S^i$ of $(S, \C)$ is smooth.   
\end{defn}

The proof of Proposition \ref{prop:admissible_maps}
yields the following result, which recovers as special case \cite[Exercise 2.6(11)]{Kreck}. 

\begin{prop}\label{prop:all_mor_admiaasible} A continuos map $f : S \to S'$ induces a morphism of stratifolds 
$(S, \C) \to (S', {\C'})$ if and only if $f$ is $(\C, \C')$-admissible. 
\end{prop}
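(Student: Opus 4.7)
The plan is to parallel the proof of Proposition \ref{prop:admissible_maps}, with the hypothesis ``$f$ is a morphism of diffeological spaces'' replaced by the smoothness-on-strata clause of $(\C, \C')$-admissibility; admissibility will supply exactly what was previously provided by the diffeological hypothesis.

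For the ``only if'' direction, if $f : (S, \C) \to (S', \C')$ is a morphism of stratifolds then $k(f)$ is a morphism of diffeological spaces, so the ``only if'' part of Proposition \ref{prop:admissible_maps} already furnishes local retractions $r_x$ and $r_{f(x)}$ with $r_{f(x)} \circ f \circ r_x = r_{f(x)} \circ f$ on a neighborhood of $x$. The remaining smoothness clause is immediate from condition (4) of Definition \ref{defn:stratifold}: for $\phi \in \C'$ we have $\phi \circ f \in \C$, whose restriction to each stratum $S^i$ lies in $C^\infty(S^i)$ via the germ isomorphism $\C_x \cong C^\infty(S^i)_x$.

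For the ``if'' direction, I would fix $\phi \in \C'$ and a point $x \in S^i$ with $f(x) \in (S')^j$, and exhibit an element of $\C$ agreeing with $\phi \circ f$ on a neighborhood of $x$; local detectability then concludes that $\phi \circ f \in \C$. The germ identification $\C'_{f(x)} \cong C^\infty((S')^j)_{f(x)}$ gives $\phi = \phi \circ r_{f(x)}$ as germs at $f(x)$, so combining this with admissibility on a small neighborhood of $x$ one obtains
\[
\phi \circ f \;=\; \phi \circ r_{f(x)} \circ f \;=\; \phi \circ r_{f(x)} \circ f \circ r_x.
\]
The right-hand composite factors through the manifold map $g := r_{f(x)} \circ f|_{U_x \cap S^i} : U_x \cap S^i \to V_{f(x)} \cap (S')^j$. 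Smoothness of $g$ is the heart of the argument: for any smooth coordinate $\psi$ on $(S')^j$ near $f(x)$, the locally extendable function $\psi \circ r_{f(x)}$ belongs to $\C'$ near $f(x)$, so the smoothness-on-strata half of admissibility makes $\psi \circ r_{f(x)} \circ f|_{S^i}$ smooth. Composing $g$ with the smooth restriction $\phi|_{V_{f(x)} \cap (S')^j}$ yields a smooth function on $U_x \cap S^i$, and pulling this back along the stratifold morphism $r_x$ places $\phi \circ f$ in $\C_{U_x}$ via the substratifold structure recalled in Remark \ref{rem:substratifold}(i).

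The main obstacle will be verifying that $g$ is smooth as a map of manifolds; this is precisely the step where Proposition \ref{prop:admissible_maps} invoked the diffeological hypothesis to guarantee that the pullback $(f \circ u)^*$ landed in $C^\infty$ germs, and here it must be replaced by the smoothness-on-strata clause together with the fact that locally extended smooth coordinates on $(S')^j$ lift through $r_{f(x)}$ to elements of $\C'$. Beyond that, the argument is a bookkeeping rerun of the proof of Proposition \ref{prop:admissible_maps}.
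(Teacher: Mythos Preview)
Your proposal is correct and follows essentially the same approach as the paper: the paper's proof is simply the one-line remark that ``the proof of Proposition \ref{prop:admissible_maps} yields the following result,'' and you have correctly unpacked what this means, namely that the diffeological hypothesis in that argument is replaced by the smoothness-on-strata clause of admissibility (with local coordinates extended via bump functions to global elements of $\C'$), while the retraction-compatibility half of the argument is unchanged.
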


Let $(S, \C)$, $(S', \C')$ and $(S'', \C'')$ be stratifolds. 
Proposition \ref{prop:all_mor_admiaasible} immediately implies that $(\C, \C')$-admissible continuous maps 
compose with  $(\C', \C'')$-admissible continuous maps $S' \to S''$.  

Let $k : \mathsf{Stfd} \to {\mathsf{Diffeology}}$ be the functor in (5.1) and $\langle  \mathsf{Im} k \rangle$ 
the full subcategory of $\mathsf{Diffeology}$ consisting of objects which come from $\mathsf{Stfd}$ by $k$. 
By the argument above, we have a wide subcategory $\langle  \mathsf{Im} k \rangle_W$ of 
$\langle  \mathsf{Im} k \rangle$ consisting of admissible maps and the same class of objects 
as in $\langle  \mathsf{Im} k \rangle$. 
Then Proposition \ref{prop:admissible_maps} establishes the following theorem. 

\begin{thm}\label{thm:a_highlight}
 The functor $k : \mathsf{Stfd} \to {\mathsf{Diffeology}}$ induces an equivalence 
 $k : \mathsf{Stfd} \to \langle  \mathsf{Im} k \rangle_W$ of categories.
In particular, one has a natural bijection   
\[
k_* : \text{\em Hom}_{{\mathsf{Stfd}}}((S, \C), (S', \C')) \stackrel{\cong}{\to} 
\text{\em Hom}_{\langle{\mathsf{Im}}k \rangle_W}((S, \D_\C), (S', \D_{\C'})).  
\]
\end{thm}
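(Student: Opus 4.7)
The plan is to verify the three standard pieces of an equivalence of categories: essential surjectivity, faithfulness, and fullness. All three reduce quickly to material already established in Propositions \ref{prop:admissible_maps} and \ref{prop:all_mor_admiaasible}. First I would observe that essential surjectivity is automatic from the construction of $\langle \mathsf{Im}\,k\rangle_W$: its objects are by definition those of the form $k(S,\C) = (S, \D_\C)$, so every object is hit on the nose by $k$.

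Next I would check that $k$ indeed factors through $\langle \mathsf{Im}\,k\rangle_W$, i.e.\ that for every stratifold morphism $\phi : (S, \C) \to (S', \C')$ the underlying continuous map $\phi : S \to S'$ is $(\C, \C')$-admissible in the sense of Definition \ref{defn:admissible_maps}. The local-retraction condition $r_{\phi(x)}\circ \phi \circ r_x = r_{\phi(x)}\circ \phi$ is exactly the ``only if'' direction of Proposition \ref{prop:admissible_maps}, which is already proved in the excerpt. The smoothness-on-strata condition follows from axiom (4) of Definition \ref{defn:stratifold}: given $\varphi \in \C'$ we have $\varphi \circ \phi \in \C$ because $\phi$ is a morphism of stratifolds, and the isomorphism $i^* : \C_x \xrightarrow{\cong} C^\infty(S^i)_x$ at every $x \in S^i$ then forces $(\varphi \circ \phi)|_{S^i}$ to be smooth locally around each point, hence smooth.

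For faithfulness, the functor $k$ acts as the identity on underlying set maps, so two stratifold morphisms having the same image under $k$ coincide as set maps and therefore as stratifold morphisms. Fullness is where Proposition \ref{prop:all_mor_admiaasible} does all the work: a morphism in $\langle \mathsf{Im}\,k\rangle_W$ from $(S, \D_\C)$ to $(S', \D_{\C'})$ is by definition a $(\C,\C')$-admissible continuous map $f : S \to S'$, and Proposition \ref{prop:all_mor_admiaasible} supplies a stratifold morphism $f : (S, \C) \to (S', \C')$ with underlying map $f$, whose image under $k$ is the original morphism. Thus $k_*$ is surjective on hom-sets, and combined with faithfulness it is a bijection; naturality in both variables is trivial since $k_*$ is the identity assignment on underlying maps.

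The main potential obstacle is only the verification of the two directions of Proposition \ref{prop:admissible_maps}, but these are already handled in the paper, so nothing genuinely new remains. The remaining sanity checks—that admissible continuous maps between stratifolds do compose (noted in the paragraph just before Theorem \ref{thm:a_highlight}) and that identities are admissible (take $r_x$ and $r_{f(x)}$ to be the same local retraction)—are routine, confirming that $\langle \mathsf{Im}\,k\rangle_W$ is a bona fide wide subcategory of $\langle \mathsf{Im}\,k\rangle$ and that $k$ is a well-defined equivalence onto it.
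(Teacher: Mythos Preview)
Your proposal is correct and follows essentially the same approach as the paper, which simply states that Proposition \ref{prop:admissible_maps} establishes the theorem. You have spelled out the standard equivalence-of-categories verification in more detail, but the key inputs---Propositions \ref{prop:admissible_maps} and \ref{prop:all_mor_admiaasible}---are exactly those the paper relies on.
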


\section{Cartesian product of stratifolds}\label{section:App}

We recall the product of stratifolds defined in \cite{Kreck}. 
Let $(S,\C_S)$ and $(S',\C_{S'})$ be stratifolds. We define a stratifold with the underlying topological space $S\times S'$. 
Let $\C_{S\times S'}$ be the $\R$-algebra consisting of functions $f:S\times S'\rightarrow \R$ which are smooth on every products $S^i\times (S')^j$ and  for each $(x,y)\in S^i\times (S')^j$, there are local retractions $r_x: U_x \rightarrow S^i\cap U_x$ and $r_y: V_y\rightarrow (S')^j\cap V_y$ for which $f|_{U_x\times V_y}=f(r_x\times r_y)$. Then $(S\times S',\C_{S\times S'})$ is a stratifold and the projections into first and second factors are morphisms of stratifolds; see \cite[Appendix A]{Kreck}.

\begin{prop}\label{prop:products}
The product of stratifolds mentioned above is the cartesian product in the category $\mathsf{Stfd}$.
\end{prop}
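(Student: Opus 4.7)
The plan is to verify the universal property of the cartesian product directly. The projections $p_1 \colon S \times S' \to S$ and $p_2 \colon S \times S' \to S'$ are morphisms of stratifolds by the construction recalled just above the statement. Given any stratifold $(T, \C_T)$ together with morphisms $f \colon T \to S$ and $g \colon T \to S'$, the only set-theoretic candidate for the pairing is $h \colon T \to S \times S'$ defined by $h(t) = (f(t), g(t))$; uniqueness is automatic, and continuity is immediate from the product topology. The entire content is to show that $\phi \circ h \in \C_T$ for every $\phi \in \C_{S \times S'}$.

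I would work locally around an arbitrary point $t_0 \in T$. Write $x_0 = f(t_0) \in S^i$ and $y_0 = g(t_0) \in (S')^j$. By the very definition of $\C_{S \times S'}$ there exist local retractions $r_{x_0} \colon U \to U \cap S^i$ and $r_{y_0} \colon V \to V \cap (S')^j$ on open neighborhoods $U$ of $x_0$ and $V$ of $y_0$ such that $\phi|_{U\times V} = \phi \circ (r_{x_0} \times r_{y_0})$. On the open neighborhood $W := f^{-1}(U) \cap g^{-1}(V)$ of $t_0$ we thus have
\[
\phi \circ h \;=\; \tilde\phi \circ \bigl(r_{x_0}\circ f,\, r_{y_0} \circ g\bigr),
\]
where $\tilde\phi$ denotes the restriction of $\phi$ to the manifold $S^i \times (S')^j$, which is smooth by hypothesis on $\C_{S\times S'}$.

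To push this expression into $\C_T$ I would pick local coordinates $(p_1,\dots,p_i)$ on $S^i$ near $x_0$ and $(q_1,\dots,q_j)$ on $(S')^j$ near $y_0$, so that $\tilde\phi$ coincides with $\Phi(p_1,\dots,p_i,q_1,\dots,q_j)$ for some smooth $\Phi$ defined on an open set of $\R^{i+j}$. Each composite $p_k \circ r_{x_0}$ lies in $\C_U$ because $r_{x_0}$ is a morphism of stratifolds into the manifold $U \cap S^i$ and $p_k$ is smooth. Since $\C_U$ consists of locally extendable functions from $\C_S$ and $f$ is a morphism of stratifolds, $p_k \circ r_{x_0} \circ f$ is locally in $\C_T$, and similarly for $q_l \circ r_{y_0} \circ g$. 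Applying $C^\infty$-closedness of $\C_T$ to $\Phi$ expresses $\phi \circ h$ on a neighborhood of $t_0$ as an element of $\C_T$, and the local detectability of $\C_T$ then yields $\phi \circ h \in \C_T$ globally.

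The main obstacle I expect is the bookkeeping in the last paragraph: one must shrink neighborhoods so that all the composites $p_k \circ r_{x_0} \circ f$ and $q_l \circ r_{y_0} \circ g$ lie in $\C_T$ on a common open set around $t_0$, and one must check that the smooth extension of $\tilde\phi$ supplied by local charts on the product manifold $S^i \times (S')^j$ is genuinely available near $(x_0,y_0)$. Once these pieces fit together, $C^\infty$-closedness and local detectability finish the argument routinely, while the remaining parts of the universal property---uniqueness and continuity of $h$, and the fact that the projections are morphisms---are either formal or already recorded in Kreck's appendix.
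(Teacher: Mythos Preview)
Your argument is correct. You verify the universal property directly by showing that for every $\phi\in\C_{S\times S'}$ the composite $\phi\circ h$ lies in $\C_T$, using the local retractions built into the definition of $\C_{S\times S'}$, local coordinates on the strata, and then the $C^\infty$-closedness and local detectability axioms of $\C_T$. The bookkeeping you flag (shrinking neighborhoods, extending $\tilde\phi$ to a smooth function on all of $\R^{i+j}$ via a bump function) is routine.

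The paper takes a different route. It factors the pairing as $(f_1\times f_2)\circ\Delta$ and proves two lemmas: that the diagonal $\Delta:Z\to Z\times Z$ is a morphism of stratifolds, and that the product $f_1\times f_2$ of two morphisms is again a morphism. The latter lemma is proved not by pulling back into $\C_Z$ but by checking the two defining conditions of the \emph{target} algebra $\C_{S\times S'}$ directly, and for the retraction condition it invokes the admissibility characterization of morphisms established earlier in Section~5. So the paper's proof is more modular---it yields the functoriality of the product as a standalone lemma---but it depends on the equivalence between morphisms and admissible maps. Your approach is more self-contained: it uses only the differential-space axioms and the explicit description of $\C_{S\times S'}$, at the cost of not isolating $f_1\times f_2$ as a morphism in its own right.
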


We use lemmas to prove Proposition \ref{prop:products}.
\begin{lem}\label{lem:products}
Let $f_1 : (S_1, \C_1) \to (S_1', \C_1')$ and  $f_2 : (S_2, \C_2) \to (S_2', \C_2')$ be morphisms of stratifolds. 
Then the product of maps 
\[
f_1\times f_2 : (S_1\times S_2, \C_{S_1\times S_2}) \to (S_1'\times S_2', \C_{S_1'\times S_2'})
\] is a morphism of stratifolds. 
\end{lem}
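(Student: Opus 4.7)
The strategy is to verify directly that for every $\phi \in \C_{S_1' \times S_2'}$, the composite $\phi \circ (f_1 \times f_2)$ satisfies the two defining conditions of $\C_{S_1 \times S_2}$ (smoothness on products of strata and local-retraction invariance). Fix $(x,y) \in S_1^i \times S_2^j$. By the definition of $\C_{S_1' \times S_2'}$ applied to the point $(f_1(x), f_2(y)) \in S_1'^k \times S_2'^l$, there exist local retractions $\bar{r}_1 : U \to U \cap S_1'^k$ and $\bar{r}_2 : V \to V \cap S_2'^l$ near $f_1(x)$ and $f_2(y)$, together with the smooth restriction $\widetilde{\phi} := \phi|_{(U \cap S_1'^k)\times(V\cap S_2'^l)}$, such that $\phi|_{U \times V} = \widetilde{\phi} \circ (\bar{r}_1 \times \bar{r}_2)$.

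Next I would form the two auxiliary compositions $g_1 := \bar{r}_1 \circ f_1$ and $g_2 := \bar{r}_2 \circ f_2$, which land in manifolds. As each factor is a morphism of stratifolds (the retractions by Remark \ref{rem:substratifold}(ii), the $f_\nu$ by hypothesis), the compositions $g_\nu$ are again morphisms of stratifolds whose target is an ordinary smooth manifold. Hence, on the open neighborhood $f_1^{-1}(U) \times f_2^{-1}(V)$ of $(x,y)$,
\[
\phi \circ (f_1 \times f_2)(a,b) = \widetilde{\phi}(g_1(a), g_2(b)).
\]
Shrinking if necessary and choosing local coordinates $\xi_1,\ldots,\xi_k$ on $U \cap S_1'^k$ and $\eta_1,\ldots,\eta_l$ on $V \cap S_2'^l$ (extended by bump functions so that they represent elements of $\C_{S_1'}'$ and $\C_{S_2'}'$ near $f_\nu(x)$), this in turn equals $h(\alpha_1(a),\ldots,\alpha_k(a),\beta_1(b),\ldots,\beta_l(b))$, where $h$ is smooth and $\alpha_p = \xi_p \circ g_1 \in \C_{S_1}$, $\beta_q = \eta_q \circ g_2 \in \C_{S_2}$ near $x$ and $y$.

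The core of the argument is then a separate claim: any function of the form $H(a,b)=h(\alpha_1(a),\ldots,\alpha_m(a),\beta_1(b),\ldots,\beta_n(b))$ with $\alpha_p \in \C_{S_1}$, $\beta_q \in \C_{S_2}$, and $h \in C^\infty(\R^{m+n})$ lies in $\C_{S_1 \times S_2}$. Smoothness of $H|_{S_1^p \times S_2^q}$ is immediate since each $\alpha_p|_{S_1^p}$ and $\beta_q|_{S_2^q}$ is smooth by clause (4) of Definition \ref{defn:stratifold}. For the local retraction condition at $(x,y)$, choose any local retractions $r_x : U_x \to U_x \cap S_1^i$ and $r_y : V_y \to V_y \cap S_2^j$; the stalk isomorphism in clause (4) forces $\alpha_p = \alpha_p \circ r_x$ and $\beta_q = \beta_q \circ r_y$ as germs at $x$ and $y$ (because both sides restrict to the same smooth function on the corresponding stratum). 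Therefore $H \circ (r_x \times r_y) = H$ on $U_x \times V_y$. Local detectability then lifts the local conclusion to global membership $\phi \circ (f_1 \times f_2) \in \C_{S_1 \times S_2}$.

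The main obstacle I anticipate is the first paragraph: the image of the stratum $S_1^i \times S_2^j$ under $f_1 \times f_2$ need not lie in a single stratum of $S_1' \times S_2'$, so one cannot blindly restrict $\phi$ to smooth pieces. The device of using the retractions provided by $\phi$ itself to \emph{project} onto the single stratum $S_1'^k \times S_2'^l$ before invoking smoothness is what resolves this; the rest is coordinate bookkeeping and an appeal to $C^\infty$-closedness together with the stalk isomorphism.
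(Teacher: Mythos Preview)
Your proposal is correct and follows the same overall strategy as the paper: use the retractions $\bar r_1,\bar r_2$ supplied by the definition of $\C_{S_1'\times S_2'}$ at the image point, pass to local coordinates on the target strata, and then verify the two defining conditions of $\C_{S_1\times S_2}$ for $\phi\circ(f_1\times f_2)$.

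The one organizational difference is worth noting. For the retraction-invariance condition, the paper appeals to the admissibility characterization of stratifold morphisms (Proposition~\ref{prop:all_mor_admiaasible}): since each $f_i$ is a morphism, one has $r_{f_i(x_i)}\circ f_i\circ r_{x_i}=r_{f_i(x_i)}\circ f_i$, and combining this with $h=h\circ(r_{f_1(x_1)}\times r_{f_2(x_2)})$ gives $h\circ(f_1\times f_2)\circ(r_{x_1}\times r_{x_2})=h\circ(f_1\times f_2)$ in two lines. You instead isolate the intermediate claim that any function of the form $H(a,b)=h(\alpha_1(a),\dots,\beta_n(b))$ with $\alpha_p\in\C_{S_1}$, $\beta_q\in\C_{S_2}$ and $h$ smooth lies in $\C_{S_1\times S_2}$, and establish retraction-invariance via the stalk isomorphism ($\alpha_p=\alpha_p\circ r_x$ as germs). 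Your route is self-contained and does not rely on Section~\ref{section3}; the paper's is shorter once admissibility is in hand. Both are sound, and your ``core claim'' is a clean general fact that the paper does not state separately.
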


\begin{proof}
For $x \in S_i$, assume that $x \in S_i^{k_i}$ and $f_i(x) \in {S'_i}^{j_i}$. 
Then we have a diagram 
\[
\xymatrix@C35pt@R18pt{
\C_{f_i(x)}' \ar[r]^-{\cong} \ar[d]_{f_i^*} & C^\infty({S'_i}^{j_i})_{f_i(x)}\\
\C_{x} \ar[r]^-{\cong} & C^\infty({S_i}^{k_i})_{x}\
}
\]
in which horizontal maps induced by the inclusions are isomorphisms. Therefore, for a smooth map $\varphi$ 
defined on an appropriate neighborhood of $f_i(x)$ in ${S'_i}^{j_i}$, 
we see that $\varphi \circ r_{f_i(x)} \circ f_i$ is a smooth map on some neighborhood of 
$x$ in $S_i^{k_i}$, where $r_{f_i(x)}$ denotes a local retraction near $f_i(x)$.
Thus, we infer that for any $h$ in $\C_{S_1'\times S_2'}$, 
\begin{eqnarray*}
h\circ (f_1\times f_2)|_{S_1^{k_1} \times S_2^{k_2}} &=& h\circ(r_{f_1(x_1)}\times r_{f_2(x_2)})\circ (f_1\times f_2) \\
&=& (h\circ \varphi_\alpha^{-1}) \circ (\varphi_\alpha \circ (r_{f_1(x_1)}\times r_{f_2(x_2)})\circ (f_1\times f_2)
\end{eqnarray*}
on some neighborhood of $(x_1, x_2)$ in $S_1^{k_1} \times S_2^{k_2}$, where $\varphi_\alpha$ is 
a local coordinate around $(f_1(x_1), f_2(x_2))$ of the manifold ${S_1'}^{j_1} \times {S_2'}^{j_2}$. 
This implies that $h\circ (f_1\times f_2)|_{S_1^{k_1} \times S_2^{k_2}}$ is smooth. Since 
$f_1$ and $f_2$ are admissible, it follows that for $h \in \C_{S_1'\times S_2'}$, 
\begin{eqnarray*}
h\circ (f\times f_2) \circ (r_{x_1}\times r_{x_2}) 
&=& h\circ (r_{f_1(x_1)}\times r_{f_2(x_2)}) \circ (f\times f_2) \circ (r_{x_1}\times r_{x_2}) \\
&=& h\circ (r_{f_1(x_1)}\times r_{f_2(x_2)}) \circ (f\times f_2) = h\circ (f\times f_2)
\end{eqnarray*} 
on an appropriate neighborhood of $(x_1, x_2)$ in $S_1\times S_2$. 
This completes the proof.
\end{proof}

By the same argument as in the proof of Lemma \ref{lem:products}, we have the following lemma. 
\begin{lem}\label{lem:diagonal_map}
The diagonal map $\Delta : S \to S\times S$ is a morphism of stratifolds.  
\end{lem}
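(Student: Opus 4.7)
The plan is to invoke Proposition \ref{prop:all_mor_admiaasible} and verify that the diagonal $\Delta$ is $(\C_S, \C_{S\times S})$-admissible in the sense of Definition \ref{defn:admissible_maps}. This reduces the problem to exhibiting, for each $x \in S$, compatible local retractions near $x$ and $\Delta(x)$, together with the smoothness condition on each stratum.

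For $x \in S^i$, I would pick a local retraction $r_x : U_x \to U_x \cap S^i$ of $(S, \C_S)$, and then propose
\[
r_{\Delta(x)} := r_x \times r_x : U_x \times U_x \to (U_x\cap S^i)\times (U_x\cap S^i)
\]
as a local retraction near $\Delta(x) = (x,x)$ in the product stratifold $(S\times S,\C_{S\times S})$. Since $r_x$ fixes $U_x \cap S^i$ pointwise, the product $r_x \times r_x$ fixes $(U_x\times U_x) \cap (S^i\times S^i)$ pointwise; moreover, by the very definition of $\C_{S\times S}$, pulling back along $r_x\times r_x$ takes elements of $\C_{S\times S}$ locally into $\C_{S\times S}$, so this is genuinely a local retraction in the stratifold sense.

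The admissibility identity is then immediate: for $y \in U_x$, idempotency $r_x\circ r_x = r_x$ on $U_x$ gives
\[
(r_x\times r_x)\circ \Delta\circ r_x(y) = (r_x(r_x(y)), r_x(r_x(y))) = (r_x(y), r_x(y)) = (r_x\times r_x)\circ \Delta(y).
\]
For the second part of admissibility, let $h \in \C_{S\times S}$ and let $S^i$ be any stratum of $(S,\C_S)$. By definition of $\C_{S\times S}$, the restriction $h|_{S^i\times S^i}$ is smooth in the ordinary manifold sense, and the diagonal $\Delta|_{S^i} : S^i \to S^i\times S^i$ is a smooth map between manifolds; hence their composite $h\circ\Delta|_{S^i}$ is smooth.

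Both requirements of Definition \ref{defn:admissible_maps} being satisfied, Proposition \ref{prop:all_mor_admiaasible} gives that $\Delta : (S,\C_S) \to (S\times S, \C_{S\times S})$ is a morphism of stratifolds. There is no genuine obstacle here; the only point worth flagging is that the naive choice $r_x\times r_x$ really does serve as a local retraction for the product stratifold, which is exactly the content of the product construction recalled at the beginning of Section \ref{section:App}.
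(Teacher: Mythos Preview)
Your proposal is correct and follows essentially the same approach as the paper. The paper does not spell out the argument but simply states that it is ``by the same argument as in the proof of Lemma \ref{lem:products}''; that argument consists precisely of verifying smoothness on strata together with the local retraction compatibility, which is exactly what you do via Proposition \ref{prop:all_mor_admiaasible} with the natural choice $r_{\Delta(x)}=r_x\times r_x$.
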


\begin{proof}[Proof of Proposition \ref{prop:products}] Let $(S, \C)$, $(S', \C')$ and $(Z, \C_1)$ be stratifolds. Let 
$f_1 : (Z, \C_1) \to (S, \C)$ and $f_2 : (Z, \C_1) \to (S', \C')$ be morphisms of stratifolds. 
It suffices to show that $(f_1\times f_2) \circ \Delta $ is a morphism 
of stratifolds. This follows from Lemmas \ref{lem:products} and \ref{lem:diagonal_map}
\end{proof}

\noindent
{\it Acknowledgements.} 
The authors are grateful to Dai Tamaki for precious and beneficial comments on our work.
They are also indebted to the referee for valuable suggestions and improvements. 
The second author thanks Takayoshi Aoki and Wakana Otsuka for considerable discussions on stratifolds.  
This research was  partially supported by a Grant-in-Aid for challenging Exploratory Research 16K13753 from Japan Society for the Promotion of Science.

\end{document}